\def\woMR#1{\w@MR#1MR#1MR\relax}%
\def\w@MR#1MR#2MR#3\relax{#2}
\def\MR@URL#1 #2\relax{http://www.ams.org/mathscinet-getitem?mr=#1}
\def\MRnumber#1{\href{\MR@URL#1 \relax}{\nolinkurl{\woMR#1}}}% make a link to mathscinet
\numberwithin{equation}{section}
\newtheorem{theorem}[equation]{Theorem}
\newtheorem{proposition}[equation]{Proposition}
\newtheorem{corollary}[equation]{Corollary}
\newtheorem{lemma}[equation]{Lemma}
\theoremstyle{definition}
\newtheorem{definition}[equation]{Definition}
\newcommand{\define}[1]{\textit{#1}}
\newcommand{\cstar}{\textup{C}$^*$}
\newcommand{\lmult}{\mathsf{L}}
\newcommand{\rmult}{\mathsf{R}}
\newcommand{\nbar}{|\!|}
\newcommand{\princip}{\delta}
\renewcommand{\epsilon}{\varepsilon}
\newcommand{\intd}{\,\mathrm{d}}
\newcommand{\haar}{\mathrm{m}}
\DeclareMathOperator{\cl}{cl}
\newcommand{\condex}[3][]{\mathbb{E}_{#1}({#2}\vert{#3})}
\newcommand{\N}{\mathbb{N}}
\newcommand{\SL}{\mathrm{SL}}
\newcommand{\PSL}{\mathrm{PSL}}
\renewcommand{\emptyset}{\varnothing}
\newcommand{\alminv}{\mathsf{A}}
\newcommand{\ultra}[1]{\mathsf{{#1}}} % Indicator function
\newcommand{\incl}{\lhook\!\to}
\def\<{\left\langle}
\def\>{\right\rangle}
\DeclareMathOperator{\Cb}{C_{b}}
\DeclareMathOperator{\cont}{C}
\DeclareMathOperator{\conv}{\ast}
\DeclareMathOperator{\dens}{d}
\newcommand{\inv}{\mathcal{I}}
\DeclareMathOperator{\lp}{L}
\DeclareMathOperator{\symdiff}{\triangle}
\DeclareMathOperator{\upperdens}{\overline{\dens}}
\DeclareMathOperator{\extr}{ex}
\newcommand{\cat}[1][top]{\mathcal{C}^{\mathrm{#1}}}
\begin{document}

\title{Triangles in Cartesian Squares of Quasirandom Groups}
\author{Vitaly Bergelson}
\thanks{The first author gratefully acknowledges the support of the NSF under grants DMS-1162073 and DMS-1500575.}
\author{Donald Robertson}
\author{Pavel Zorin-Kranich}
\subjclass[2010]{Primary 05D10; Secondary 28D15}
\date{\today}

\begin{abstract}
We prove that triangular configurations are plentiful in large subsets of cartesian squares of finite quasirandom groups from classes having the quasirandom ultraproduct property, for example the class of finite simple groups.
This is deduced from a strong double recurrence theorem for two commuting measure-preserving actions of a minimally almost periodic (not necessarily amenable or locally compact) group on a (not necessarily separable) probability space.
\end{abstract}

\maketitle

\section{Introduction}

By showing that any subset of $\PSL(2,\mathbb{F}_q)$ with density at least $2|\PSL(2,\mathbb{F}_q)|^{8/9}$ contains a subset of the form $\{g,x,gx\}$, Gowers~\cite[Theorem~3.3]{MR2410393} answered negatively a question of Babai and S\'{o}s~\cite{MR810691} on the existence of a constant $c > 0$ such that every finite group $G$ has a subset of size at least $c|G|$ that is \define{product-free}, meaning that it contains no subset of the form $\{g,x,gx\}$.
Gowers also showed~\cite[Lemma~5.1]{MR2410393} that quasirandom groups constitute the general setting for such a result; a finite group is \define{$D$-quasirandom} if and only if it has no non-trivial representations over $\mathbb{C}$ of dimension less than $D$.

The non-existence of large product-free sets in infinite, amenable groups was investigated in \cite{MR2561208}, where it was shown that $G$ is minimally almost periodic (meaning that it has no non-trivial, finite-dimensional, unitary representations over $\mathbb{C}$) if and only if every subset having positive density with respect to some F\o{}lner sequence contains a subset of the form $\{g,x,gx\}$.

In both settings, the non-existence of large product-free sets is related to the absence of finite dimensional representations.
It can also be related to the ergodic theory of group actions.
Indeed, given $A \subset G$, one can find in $A$ a set of the form $\{g,x,gx\}$ if and only if there is $g \in A$ such that $A \cap g^{-1}A \ne \emptyset$ and one is now faced with a question of recurrence for the action of $G$ on itself by left multiplication.
When $G$ is finite, the Haar measure on $G$ is a natural invariant measure, while for countable amenable groups a version of the Furstenberg correspondence principle (e.g.\ \cite[Theorem~4.17]{MR1776759}) can be used to phrase the problem dynamically.
In terms of ergodic theory, then, the question becomes one of relating the representation theory of $G$ to positivity of correlations $\mu(B \cap (T^g)^{-1}B)$ for an action $T$ of $G$ on a probability space $(X,\mathscr{B},\mu)$.

With this framework in mind, the existence of more complicated configurations in subsets of quasirandom groups was considered in \cite{MR3177376}, and in particular the question of whether every large enough subset $A$ of a $D$-quasirandom group contains a configuration of the form $\{g,x,gx,xg\}$ or, equivalently, whether $A \cap g^{-1}A \cap Ag^{-1}$ is non-empty for some $g \in A$.
Dynamically this corresponds to positivity of a multiple correlation of the form
\begin{equation}
\label{eqn:cornerCorrelation}
\mu(B \cap (T_1^g)^{-1} B \cap (T_1^g T_2^g)^{-1}B)
\end{equation}
where $\mu$ is normalized counting measure and $T_1,T_2$ are the commuting actions of $G$ on itself determined by left and right multiplication.
It was shown, in the following strong form, that the conjugation-invariant subsets of $G$ are the only obstruction to the positivity of such correlations.

\begin{theorem}[{\cite[Theorem~5]{MR3177376}}]
\label{thm:bt2Mixing}
Let $G$ be a finite, $D$-quasirandom group with normalized Haar measure $\haar$ and let $f_1,f_2,f_3 : G \to \mathbb{R}$ be bounded in absolute value by 1.
Then
\begin{equation*}
\int \left| \int f_1(x) \cdot f_2(xg) \cdot f_3(gx) \intd\haar(x) - \int f_1 \intd\haar \int f_2 \cdot \condex{f_3}{\mathcal{I}_G} \intd\haar \right| \intd\haar(g) \le c(D)
\end{equation*}
where $\condex{f_3}{\mathcal{I}_G}$ is the orthogonal projection in $\lp^2(G,\haar)$ of $f_3$ on the conjugation-invariant functions and $c(D)$ is a quantity depending only on $D$ that goes to zero as $D \to \infty$.
\end{theorem}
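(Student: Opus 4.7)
The plan is to reduce, via the elementary bound $\int|\phi|\intd\haar \le (\int|\phi|^2\intd\haar)^{1/2}$, to an $\lp^2(G,\haar)$-estimate for the deviation $\Lambda(g)-c$, where $\Lambda(g):=\int f_1(x)f_2(xg)f_3(gx)\intd\haar(x)$ and $c:=\int f_1\intd\haar\int f_2\condex{f_3}{\mathcal{I}_G}\intd\haar$. Decomposing $f_3 = f_3^I + f_3^\perp$ with $f_3^I := \condex{f_3}{\mathcal{I}_G}$ and $f_3^\perp$ orthogonal to the conjugation-invariant functions splits $\Lambda = \Lambda_I + \Lambda_\perp$. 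Both $\|\Lambda_I - c\|_{\lp^2(\haar)}$ and $\|\Lambda_\perp\|_{\lp^2(\haar)}$ will be bounded via Peter--Weyl combined with $D$-quasirandomness: since every non-trivial irreducible sub-representation of the regular representation has dimension at least $D$, the orthogonality relations give $\int_G|\langle R_g f, h\rangle|^2 \intd\haar(g) \le \|f\|_2^2\|h\|_2^2/D$ for mean-zero $f,h$, where $R_g$ denotes right translation.

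For the invariant piece, cyclic invariance of $f_3^I$ gives $f_3^I(gx)=f_3^I(xg)$, so $\Lambda_I(g)=\int f_1(x)(f_2 f_3^I)(xg)\intd\haar(x)$ is a one-parameter convolution-type average. Subtracting the two means, $\Lambda_I(g)-c$ becomes the pairing of the shifted mean-zero function $y\mapsto f_1'(yg^{-1})$ (with $f_1':=f_1-\int f_1\intd\haar$) with the mean-zero function $h':=f_2f_3^I-\int f_2f_3^I\intd\haar$, so the mixing bound above yields $\|\Lambda_I-c\|_{\lp^2(\haar)}^2 \le D^{-1}$.

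The substantive work is the orthogonal part. I would expand $\int|\Lambda_\perp(g)|^2\intd\haar(g)$, substitute $g\mapsto x^{-1}g$ to strip the left-most factor, and parametrise the second copy of the $x$-variable as $y=hx$. The resulting expression features the conjugation-shifted functions $(T^xf_3^\perp)(g):=f_3^\perp(x^{-1}gx)$, and two rounds of Cauchy--Schwarz --- first in $(x,h)$, then in the remaining variables after squaring --- reduce matters to an $x$-average of a higher-order correlation of $T^xf_3^\perp$. Because $f_3^\perp\in\mathcal{I}_G^\perp$, the Peter--Weyl decomposition $\lp^2(G,\haar)\cong \bigoplus_\pi V_\pi\otimes V_\pi^*$ places $f_3^\perp$ in the non-trivial summands of the conjugation representation, whose irreducible constituents of $G$ have dimension $\ge D$ by quasirandomness; consequently $\int_G |\langle T^xf,h\rangle|^2\intd\haar(x) \le \|f\|_2^2\|h\|_2^2/D$ for $f,h\in\mathcal{I}_G^\perp$. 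Propagating this bound through the Cauchy--Schwarz chain yields $\|\Lambda_\perp\|_{\lp^2(\haar)}\le c(D)$ with $c(D)\to 0$ as $D\to\infty$.

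The main obstacle is arranging the Cauchy--Schwarz steps in the orthogonal part so that the cancellation enjoyed by $f_3^\perp$ under conjugation survives rather than being absorbed into a trivial bound; a careless application loses the gain and leaves an $O(1)$ residue. The conjugation representation of $G$ on $\mathcal{I}_G^\perp$ --- rather than the regular representation of $G$ on mean-zero $\lp^2(\haar)$ --- is the operative mixing mechanism, and this reflects the fact that the configuration $\{g,x,gx,xg\}$ probes precisely the conjugation structure of $G$.
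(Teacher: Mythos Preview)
This theorem is not proved in the present paper; it is quoted from \cite{MR3177376} as background, and the paper explicitly notes that a quantitative reproof was later given by Austin~\cite{arxiv:1310.6781}. There is therefore no ``paper's own proof'' to compare your attempt against.

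That said, your outline is in the spirit of Austin's quantitative argument rather than the original Bergelson--Tao proof. The latter proceeds by contradiction through ultraproducts: one assumes the bound fails along a quasirandom sequence, passes to an ultra quasirandom group, and derives a contradiction from an ergodic-theoretic identity there; the resulting $c(D)$ is ineffective. Your plan---split $f_3$ into its conjugation-invariant and orthogonal parts, handle $\Lambda_I$ as a genuine two-point correlation via regular-representation mixing, and attack $\Lambda_\perp$ by Cauchy--Schwarz plus a spectral gap for the conjugation representation on $\mathcal{I}_G^\perp$---is the finitary route and would yield an explicit $c(D)$.

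Two cautions about the orthogonal part. First, your claimed bound $\int |\langle T^x f,h\rangle|^2\intd\haar(x)\le \|f\|_2^2\|h\|_2^2/D$ for $f,h\in\mathcal{I}_G^\perp$ is correct in spirit (every non-trivial irreducible constituent of the conjugation representation has dimension $\ge D$ by quasirandomness), but the Schur-orthogonality computation requires care when irreducibles occur with multiplicity; you should state it as the operator-norm form $\|\frac{1}{|G|}\sum_g \phi(g)\rho(g)\|\le D^{-1/2}\|\phi\|_2$ for representations $\rho$ without invariant vectors. Second, and more seriously, you have not actually specified the Cauchy--Schwarz chain, and you yourself flag that a careless arrangement loses the gain. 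In Austin's argument the crucial device is to expand $\|\Lambda_\perp\|_2^2$ so that the $g$-integral produces an inner product in the \emph{regular} representation, reducing to a three-point correlation of $f_3^\perp$ against shifts of $f_1,f_2$; a further Cauchy--Schwarz then isolates a quantity controlled by the conjugation action on $f_3^\perp$. Until you write out which variables are paired at each step and verify that at least one factor retains $f_3^\perp$ in a position where the conjugation mixing applies, this remains a plan rather than a proof.
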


By specializing to $f_1 = f_2 = f_3 = 1_A$ where $A$ is a subset of a $D$-quasirandom group $G$, it follows (see \cite[Corollary~6]{MR3177376}) that for any $\epsilon > 0$ one has
\begin{equation*}
\frac{|A|^3}{|G|^3} - \epsilon \le \frac{|A \cap gA \cap Ag|}{|G|} \le \frac{|A|^2}{|G|^2} + \epsilon
\end{equation*}
for all but at most $\epsilon^{-1}c(D)|G|$ many $g \in G$, where $c(D) \to 0$ as $D \to \infty$.
Thus if $|A| > \epsilon^{-1} c(D) |G|$ then there $g \in A$ for which many configurations of the form $\{x,gx,xg\}$ can be found in $A$.

Theorem~\ref{thm:bt2Mixing} has recently been reproved by Austin~\cite{arxiv:1310.6781} without the use of ultra quasirandom groups, yielding explicit bounds for $c(D)$.
In particular \cite[Theorem~1]{arxiv:1310.6781} implies that for any $D$-quasirandom group $G$ and any $A \subset G$ with $|A|^4 > 4 D^{-1/8} |G|^4$ one has
\[
\haar(A \cap g^{-1}A \cap Ag^{-1}) \ge \haar(A)^3 - \frac{4D^{-1/8}}{\haar(A)} > 0
\]
for some $g \in A$.
See also \cite{arxiv:1405.5629}, where the bound in \cite{arxiv:1310.6781} was improved and Theorem~\ref{thm:bt2Mixing} was generalized to the setting of probability groups.

Correlations of the form \eqref{eqn:cornerCorrelation} control the existence of many other types of configuration.
To describe a couple, in \cite{MR1481813} positivity of \eqref{eqn:cornerCorrelation} on average for arbitrary commuting actions $T_1$ and $T_2$ of any countable, amenable group $G$ on a probability space was proven and used to exhibit triangular configurations of the form $\{ (x,y), (gx,y),(gx,gy) \}$ in any positive-density subset of $G \times G$, and in \cite{arxiv:1402.4736} the correlation \eqref{eqn:cornerCorrelation} was shown to be larger than $\mu(B)^4$ on average when $T_1$ and $T_2$ are any commuting actions of an amenable, minimally almost periodic group having the property that the $G \times G$ action $(g_1,g_2) \mapsto T_1^{g_1} T_2^{g_2}$ is ergodic.
This was used to exhibit two-sided finite products sets in positive-density subsets of such groups.

In this paper we show (see Theorem~\ref{thm:syndetic} below) that there are many triangles, i.e.\ configurations of the form $\{ (x,y), (gx,y),(gx,gy)\}$, in large enough subsets of $G \times G$ provided $G$ is quasirandom enough.
To do this we consider the behavior of the correlation
\begin{equation}
\label{eqn:triangleCorrelation}
\int f_0(x,y) \cdot f_1(gx,y) \cdot f_2(gx,gy) \intd(\haar \times \haar)(x,y)
\end{equation}
for functions $f_0,f_1,f_2 : G \times G \to \mathbb{R}$.
As in \cite{MR3177376} we do not work with a specific quasirandom group $G$ directly, but instead consider the asymptotic behavior of \eqref{eqn:triangleCorrelation} along quasirandom sequences of groups.
\begin{definition}
A sequence $n \mapsto G_n$ of finite, $D_n$-quasirandom groups is a \define{quasirandom} sequence of groups if $D_n \to \infty$ as $n \to \infty$.
\end{definition}
Given a quasirandom sequence $n \mapsto G_n$ of groups, we relate the asymptotic behavior of \eqref{eqn:triangleCorrelation} to a correlation of the form
\begin{equation}
\label{eqn:rothCorrelation}
\int f_0 \cdot T_1^g f_1 \cdot T_1^g T_2^g f_2 \intd\mu
\end{equation}
for commuting actions $T_1$ and $T_2$ of a limiting group $G$ formed from the $G_n$ on a probability space $(X,\mathscr{B},\mu)$.
One can see that if $f_0$ and $f_1$ are supported on disjoint $T_1$-invariant sets then \eqref{eqn:rothCorrelation} is zero, so the correlation depends on the conditional expectations of $f_0$ and $f_1$ on the sub-$\sigma$-algebra of $T_1$-invariant sets.
Similarly, the result depends on the expectations of $f_1$ and $f_2$ on the $T_2$-invariant sets, and on the expectations of $f_0$ and $f_2$ on the $T_1T_2$-invariant sets. 

In order to make precise the dependence of \eqref{eqn:rothCorrelation} on the invariant sub-$\sigma$-algebras mentioned above, one studies the limiting behavior of \eqref{eqn:rothCorrelation} along some limiting scheme, a method that has been in use ever since Furstenberg's ergodic proof~\cite{MR0498471} of Szemer\'{e}di's theorem.
Which limiting scheme is used, and which sub-$\sigma$-algebras control the limiting behavior, depends on the properties of the acting group $G$.

When $G$ is countable and amenable one can use a F\o{}lner sequence $N \mapsto \Phi_N$ to average \eqref{eqn:cornerCorrelation}.
Austin \cite{arxiv:1309.4315} has shown, using his satedness technique -- see Section~\ref{sec:satedness}, that when $(X,\mathscr{B},\mu)$ is a standard probability space, one can find a potentially larger probability space $(Y,\mathscr{D},\nu)$, commuting actions $S_1$ and $S_2$ of $G$ on $(Y,\mathscr{D},\nu)$, and a measurable, measure-preserving map $\pi : Y \to X$ intertwining $T_i$ and $S_i$ such that
\begin{equation}
\label{eqn:cornerFactors}
\int f_0 \cdot T_1^g f_1 \cdot T_1^g T_2^g f_2 \intd\mu -  \int \condex{f_0 \circ \pi}{\alminv_1 \vee \alminv_{12}} \cdot T_1^g \condex{f_1 \circ \pi}{\alminv_1 \vee \alminv_2} \cdot T_1^g T_2^g \condex{f_2 \circ \pi}{\alminv_{12} \vee \alminv_2} \intd\nu
\end{equation}
averaged along any F\o{}lner sequence in $G$ converges to 0, where $\alminv_1,\alminv_2$ and $\alminv_{12}$ are the sub-$\sigma$-algebras of $\mathscr{D}$ generated by the $S_1$, $S_2$ and $S_1 S_2$-invariant functions in $\lp^2(Y,\mathscr{D},\nu)$ respectively.
The $\sigma$-algebras $\alminv_1 \vee \alminv_{12}$, $\alminv_1 \vee \alminv_2$ and $\alminv_{12} \vee \alminv_2$ are called \define{characteristic factors} for the correlation \eqref{eqn:rothCorrelation}.
Austin also gave similar results for longer correlations.

When $G$ is non-amenable, averaging along F\o{}lner sequences is unavailable.
Recently limits along minimal idempotent ultrafilters (idempotents in the Stone--\v{C}ech compactification $\beta G$ of $G$ that belong to a minimal ideal -- see Section~\ref{sec:minimalIdempotents} for details and \cite{MR2354320} for the relative merits of minimal idempotents) have been employed as a replacement.
It was shown in \cite{MR2354320} that for any minimal idempotent ultrafilter $\ultra{p}$ on a countable group $G$ one has
\begin{equation}
\label{eqn:minimalIdempotentFactors}
\lim_{g \to \ultra{p}} \int f_0 \cdot T_1^g f_1 \cdot T_1^g T_2^g f_2 \intd\mu - \int f_0 \cdot T_1^g \condex{f_1}{\mathsf{C}_1} \cdot T_1^g T_2^g \condex{f_2}{\mathsf{C}_{12}} \intd\mu = 0
\end{equation}
where $\mathsf{C}_1$ and $\mathsf{C}_{12}$ are the sub-$\sigma$-algebras corresponding to functions that are almost-periodic for $T_1$ and $T_1T_2$ respectively over $\alminv_2$.
When $G$ is amenable, one can obtain stronger combinatorial results by using minimal idempotent ultrafilters rather than F\o{}lner sequences: this is because positivity of correlations along minimal idempotent ultrafilters yields a larger set of $g \in G$ for which \eqref{eqn:cornerCorrelation} is positive; see \cite{MR2354320} for details.
For limits of longer correlations along minimal idempotent ultrafilters there is no known description of sub-$\sigma$-algebras for which an analogue of \eqref{eqn:minimalIdempotentFactors} holds.
We remark that the difficulty in adapting the techniques in either \cite{arxiv:1309.4315} or \cite{MR2354320} lies in the apparent need to understand certain measures that are not invariant, but merely asymptotically invariant along the ultrafilter.

In this paper we combine Austin's satedness techniques with limits along minimal idempotent ultrafilters to obtain the expected characteristic factors for commuting actions of minimally almost periodic groups.

\begin{theorem}
\label{thm:characteristicFactors}
Let $G$ be a minimally almost periodic group and let $T_1,T_2$ be commuting, measure-preserving actions of $G$ on a compact, Hausdorff probability space $(X,\mu)$ via homeomorphisms.
For any $\epsilon > 0$ and any $f_1$ in $\lp^\infty(X,\mu)$ bounded by $1$ there are commuting, measure-preserving actions $S_1,S_2$ of $G$ on a compact, Hausdorff probability space $(Y,\nu)$ and an intertwining factor map $\pi : Y \to X$ such that
\[
\left| \lim_{g \to \ultra{p}} \int f_0 \cdot T_1^g f_1 \cdot T_1^gT_2^g f_2 \intd\mu - \int \condex{f_0 \circ \pi}{\alminv_1 \vee \alminv_{12}} \cdot S_1^g \condex{f_1 \circ \pi}{\alminv_1 \vee \alminv_2} \cdot S_1^g S_2^g \condex{f_2 \circ \pi}{\alminv_{12} \vee \alminv_2} \intd\nu \right| < \epsilon
\]
for all minimal idempotent ultrafilters $\ultra{p}$ on $G$ and all $f_0,f_2$ in $\lp^\infty(X,\mu)$ bounded by $1$.
\end{theorem}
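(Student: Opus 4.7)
The plan is to combine Austin's satedness technique (Section~\ref{sec:satedness}) with a van der Corput argument adapted to limits along minimal idempotent ultrafilters on a minimally almost periodic group (Section~\ref{sec:minimalIdempotents}). The three-term structure singles out three commuting ``derivative'' transformations $T_1$, $T_2$, $T_1 T_2$, and the expected characteristic factors are generated by invariant functions for pairs of these, matching the three sub-$\sigma$-algebras $\alminv_1 \vee \alminv_{12}$, $\alminv_1 \vee \alminv_2$, and $\alminv_{12} \vee \alminv_2$ in the statement.

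First I would construct the extension $(Y,\nu)$, together with commuting $S_1,S_2$ and intertwining $\pi$, as a system that is sated with respect to the classes of extensions naturally associated to each of these three pairs of invariant $\sigma$-algebras. Satedness is an iterative procedure tuned to $f_1$ and the target error $\epsilon$, and it terminates in a (possibly non-separable) compact Hausdorff probability space in which no further extension can move the projection of $f_1 \circ \pi$ onto $\alminv_1 \vee \alminv_2$ by more than $\epsilon/2$ in $\lp^2$. This is the reason the theorem is phrased for compact Hausdorff, rather than standard, probability spaces.

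Once the extension is in hand, decompose each of $f_0 \circ \pi$, $f_1 \circ \pi$, $f_2 \circ \pi$ as its conditional expectation onto the relevant pair of invariant $\sigma$-algebras plus an orthogonal remainder. The ``bulk'' term gives the integral on the right-hand side of the statement, so it remains to show that any summand containing at least one orthogonal remainder has limit of size $O(\epsilon)$ along every minimal idempotent ultrafilter $\ultra{p}$. By a change of variables under $T_1^g$ and under $T_2^g$, the cases in which $f_0$ or $f_2$ is orthogonal reduce to the case in which $f_1$ is orthogonal to $\alminv_1 \vee \alminv_2$. For that case, apply an idempotent-ultrafilter version of the Hilbert space van der Corput lemma to the inner product, bounding its square by a repeated ultrafilter limit of correlations of pairs of shifts of the remainder, i.e.\ quantities of the form $\int (\bar h \cdot S_1^{k} h) \cdot (\bar f_2 \cdot S_2^{k} f_2) \intd\nu$ with $k$ drawn along $\ultra{p}$. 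Minimal almost periodicity of $G$ forces the $S_1$-weakly-mixing component to dominate once $h$ is orthogonal to $\alminv_1$, and the idempotent limit collapses onto the conditional expectations onto $\alminv_2$ and $\alminv_{12}$; by satedness these already agree with their values in $(X,\mu)$ up to an error controlled by the $\epsilon/2$ chosen above.

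The main obstacle will be performing van der Corput in the non-averaged setting of minimal idempotent ultrafilters. Unlike averages along a \folner{} sequence, the measures pushed along $\ultra{p}$ are not invariant but only asymptotically so, and the textbook van der Corput bound is phrased as a \cesaro{} average. The bridge is the self-averaging identity $\lim_{g \to \ultra{p}} F(g) = \lim_{g_1 \to \ultra{p}} \lim_{g_2 \to \ultra{p}} F(g_1 g_2)$ characteristic of idempotents, combined with the fact that $\ultra{p}$ lies in a minimal ideal of $\beta G$, which provides enough recurrence to close the double-limit argument and to match the output with what satedness has prepared for us.
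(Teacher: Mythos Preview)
Your overall architecture---pass to an $\epsilon$-sated extension, then run an idempotent-ultrafilter van der Corput argument---is exactly what the paper does, and your description of the vdC step for the middle function $f_{1}$ is essentially correct (minor slip: the second factor in the differenced correlation should carry $S_{12}^{k}$, not $S_{2}^{k}$, since $u(g)=S_{1}^{g}\phi\cdot S_{12}^{g}f_{2}$).

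The genuine gap is your treatment of $f_{0}$ and $f_{2}$. The claimed ``change of variables under $T_{1}^{g}$ and $T_{2}^{g}$'' that would reduce those cases to the middle one does not exist here: any such substitution either sends $g\mapsto g^{-1}$ (which does not preserve the minimal idempotent $\ultra{p}$) or produces a pattern $(T_{1}^{g^{-1}},\mathrm{id},T_{2}^{g})$ that is not of the original shape. More seriously, running a parallel vdC/satedness argument for $f_{0}$ or $f_{2}$ would require $(\epsilon,f_{0},\inv_{1,12})$- or $(\epsilon,f_{2},\inv_{12,2})$-satedness, and these functions are not fixed in advance---the theorem demands a single extension working for \emph{all} $f_{0},f_{2}$.

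The paper avoids this entirely. One takes only a single $(\epsilon^{2},f_{1},\inv_{1,2})$-sated extension. After vdC replaces $f_{1}\circ\pi$ by $\condex{f_{1}\circ\pi}{\inv_{1,2}\mathbf{Y}}$, approximate the latter by finite sums of products $h_{1}h_{2}$ with $h_{i}$ being $S_{i}$-invariant. Then $S_{1}^{g}(h_{1}h_{2})=h_{1}\cdot S_{1}^{g}h_{2}$, and the triple correlation collapses to a \emph{single} average
\[
\int (f_{0}\circ\pi)\,h_{1}\cdot S_{12}^{g}\big((f_{2}\circ\pi)\,h_{2}\big)\intd\nu,
\]
whose $\ultra{p}$-limit is computed directly by Theorem~\ref{thm:minimalInvariant} as $\int (f_{0}\circ\pi)h_{1}\cdot\condex{(f_{2}\circ\pi)h_{2}}{\alminv^{\nu}_{12}\mathbf{Y}}\intd\nu$. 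From this expression one reads off immediately that only $\condex{f_{0}\circ\pi}{\alminv^{\nu}_{1,12}\mathbf{Y}}$ and $\condex{f_{2}\circ\pi}{\alminv^{\nu}_{12,2}\mathbf{Y}}$ matter---no further satedness, no symmetry, and the extension is visibly uniform in $f_{0},f_{2}$.
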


Note that the space $X$ in Theorem~\ref{thm:characteristicFactors} is not assumed to be metrizable, and that the group $G$ can be uncountable.
For this reason we need to extend Austin's notion of satedness to such spaces; this generalization is carried out in Section~\ref{sec:satedness}.
We then combine Theorem~\ref{thm:characteristicFactors} with an application of Gelfand theory to obtain the following strong recurrence result.

\begin{theorem}
\label{thm:mapCorners}
Let $G$ be a minimally almost periodic group and let $T_1$ and $T_2$ be commuting actions of $G$ on a probability space $(X,\mathscr{B},\mu)$ by measurable, measure-preserving maps.
If $\mu$ is ergodic for the $G \times G$ action $(g_1,g_2) \mapsto T_1^{g_1}T_2^{g_2}$ then
\begin{equation}
\label{eqn:minimalLargeRec}
\lim_{g \to \ultra{p}} \int f_{0} \cdot T_1^g f_{1} \cdot T_1^g T_2^g f_{2} \intd\mu
\ge
\left( \int f_{0}^{1/4}f_{1}^{1/4}f_{2}^{1/4} \intd\mu \right)^4
\end{equation}
for any minimal idempotent ultrafilter $\ultra{p}$ on $G$ and any non-negative measurable functions $f_{0},f_{1},f_{2}$ on $X$.
\end{theorem}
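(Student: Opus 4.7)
The plan is to pass from the measurable to the topological setting required by Theorem~\ref{thm:characteristicFactors}, apply that theorem to replace the correlation by an integral of conditional expectations on the characteristic $\sigma$-algebras, leverage the ergodicity and minimal almost periodicity hypotheses to identify those $\sigma$-algebras as independent, and finally bound the resulting product-space integral from below by a triangle-type H\"older/Jensen argument.

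For the first reduction I would pick a $T_1,T_2$-invariant, countably generated unital \cstar-subalgebra of $\lp^\infty(X,\mu)$ containing truncations $\min(f_i,M)$ of $f_0,f_1,f_2$ together with a countable dense family of their $T_1,T_2$-translates, and apply Gelfand duality to realise it as $\cont(\widetilde X)$ for a compact Hausdorff space $\widetilde X$ carrying commuting homeomorphisms $\widetilde T_1,\widetilde T_2$ preserving a Radon probability measure $\widetilde\mu$; the limit in~\eqref{eqn:minimalLargeRec} transfers to $\widetilde X$, and letting $M\to\infty$ by monotone convergence recovers the original $f_i$.

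Applying Theorem~\ref{thm:characteristicFactors} with $\epsilon\to 0$ produces an extension $(Y,\nu)$ with commuting measure-preserving actions $S_1,S_2$ and a factor map $\pi:Y\to\widetilde X$, so that the limit equals $\int F_0\cdot S_1^g F_1\cdot S_1^g S_2^g F_2\,\intd\nu$, where $F_i=\condex{f_i\circ\pi}{\alminv_j\vee\alminv_k}$ is the conditional expectation onto the appropriate characteristic factor. Because $G$ (hence $G\times G$) is minimally almost periodic, the ergodicity of $\mu$ under the $(T_1^{g_1}T_2^{g_2})$-action entails weak mixing; a further application of the satedness technique of Section~\ref{sec:satedness} then lets us arrange the extension so that $\alminv_1,\alminv_2,\alminv_{12}$ on $Y$ are jointly $\nu$-independent, factoring $(Y,\nu)$ onto $(Y_1\times Y_2\times Y_{12},\nu_1\times\nu_2\times\nu_{12})$ on which $S_1^g$ fixes $y_1$, $S_2^g$ fixes $y_2$, and $S_1^gS_2^g$ fixes $y_{12}$, with $S_1^g$ and $S_1^gS_2^g$ acting identically on the $y_2$-coordinate. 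A change of variables in $y_2$ then shows the integral is independent of $g$ and equals
\[
I=\int F_0(y_1,y_{12})\,F_1(y_1,y_2)\,F_2(y_{12},y_2)\,\intd\nu_1\intd\nu_2\intd\nu_{12}.
\]

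The remaining and main step is to prove $I\ge\bigl(\int f_0^{1/4}f_1^{1/4}f_2^{1/4}\,\intd\mu\bigr)^4$. Setting $h_i=(f_i\circ\pi)^{1/4}$ and unfolding each $F_i$ as a partial integral of $h_i^4$ over the missing coordinate expresses $I$ as a six-fold integral in which each $h_i^4$ is evaluated at coordinates sharing two of three slots with the others; the diagonal value recovers $\int h_0h_1h_2\,\intd\nu=\int f_0^{1/4}f_1^{1/4}f_2^{1/4}\,\intd\mu$. A carefully organised chain of Cauchy--Schwarz and Jensen applications respecting the triangular symmetry among the three factors then produces the required bound, distributing the exponent $1/4$ symmetrically across $f_0,f_1,f_2$. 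Producing the correct chain of elementary inequalities is the main obstacle, with the argument paralleling the derivation of the $\mu(B)^4$ corners bound for amenable groups in~\cite{arxiv:1402.4736}.
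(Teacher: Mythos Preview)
Your outline follows the paper's strategy---topological model, characteristic factors via Theorem~\ref{thm:characteristicFactors}, independence of the three invariant $\sigma$-algebras, and a lower bound on the resulting integral---but two of the steps as you describe them do not go through.

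First, the joint independence of $\alminv_1,\alminv_2,\alminv_{12}$ is not obtained by ``further satedness.'' Satedness controls how conditional expectations onto an idempotent class behave under extension; it does not by itself force distinct invariant sub-$\sigma$-algebras to be independent, and there is no reason to expect an additional sated extension to achieve this. In the paper the independence is proved directly on \emph{any} ergodic extension: ergodicity of the $G\times G$ action makes $\alminv_1$ and $\alminv_2$ independent, so $\lp^2(\alminv_{1,2})\cong\lp^2(\alminv_1)\otimes\lp^2(\alminv_2)$; an $\alminv_{12}$-measurable function, after projecting onto $\alminv_{1,2}$, is a reversible vector for $T_{12}$ in this tensor product, and by the Jacobs--de~Leeuw--Glicksberg splitting (Lemma~\ref{lem:jdgSplitting}) it therefore lies in $\lp^2(\mathsf{C}_2\alminv_1)\otimes\lp^2(\mathsf{C}_1\alminv_2)$. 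Minimal almost periodicity of $G$ makes both factors trivial, so the projection is constant and triple independence follows. You invoke weak mixing of the $G\times G$ action, but that alone does not yield this; the JdG decomposition is the missing ingredient.

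Second, the lower bound you call ``the main obstacle'' is in fact a one-line application of Chu's inequality (Lemma~\ref{lem:chu}): writing the integral as $\int 1\cdot\condex{f_0}{\alminv_{1,12}}\cdot\condex{f_1}{\inv_{1,2}}\cdot\condex{f_2}{\alminv_{12,2}}\intd\nu$ and applying the lemma with $n=3$ and the leading function identically $1$ gives exactly $\bigl(\int f_0^{1/4}f_1^{1/4}f_2^{1/4}\intd\nu\bigr)^4$. No unfolding to a six-fold integral or Cauchy--Schwarz chain is required, and the product-space model $Y_1\times Y_2\times Y_{12}$ you build is not needed once joint independence is known.

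A minor point: the extension in Theorem~\ref{thm:characteristicFactors} depends on $\epsilon$ (and on $f_1$), so you cannot ``let $\epsilon\to 0$'' and retain a single $(Y,\nu)$. The paper fixes $\epsilon$, shows the conditional-expectation integral on the corresponding extension is at least the right-hand side, and lets $\epsilon\to 0$ only at the very end.
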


We remark that, by \cite[Theorem~B.1]{arxiv:1309.6095}, the exponent in \eqref{eqn:minimalLargeRec} cannot be improved to $3$ in general.

To deduce the existence of triangles in large enough subsets of quasirandom groups from Theorem~\ref{thm:mapCorners} we need to form a limiting group from a quasirandom sequence $n \mapsto G_n$.
For any sequence $n \mapsto G_n$ of finite groups and any ultrafilter on $\mathbb{N}$ one can form their ultraproduct $G$, which can be given the structure of a probability group using Loeb measure (for details on Loeb measure see, for example \cite{MR720746}).
When the sequence $n \mapsto G_n$ is quasirandom the group $G$ is called an \define{ultra quasirandom group}.
There are commuting actions of $G$ on the ultraproduct $X$ of the sequence $n \mapsto G_n \times G_n$ of groups that correspond to left multiplication by $G_n$ in the first and second coordinates of $G_n \times G_n$ respectively.
In the case that $G$ is minimally almost periodic, we obtain the following result from Theorem~\ref{thm:mapCorners}.

\begin{theorem}
\label{thm:uqrTriangles}
Let $n \mapsto G_n$ be a sequence of finite groups such that their ultraproduct $G$ is minimally almost periodic, and let $\Omega$ be the ultraproduct of the groups $G_n \times G_n$.
Let $L_1$ and $L_2$ be the actions of $G$ on $\Omega$ induced by left multiplication in the first and second coordinates respectively and let $\haar$ be Loeb measure on $\Omega$.
For any minimal idempotent ultrafilter $\ultra{p}$ on $G$ we have
\[
\lim_{g \to \ultra{p}} \int f_{0} \cdot L_1^g f_{1} \cdot L_1^g L_2^g f_{2} \intd\haar
\ge
\left( \int f_{0}^{1/4}f_{1}^{1/4}f_{2}^{1/4} \intd\haar \right)^4
\]
for any non-negative measurable functions $f_{0},f_{1},f_{2}$ on $\Omega$.
\end{theorem}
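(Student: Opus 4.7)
The plan is to derive Theorem~\ref{thm:uqrTriangles} as a direct application of Theorem~\ref{thm:mapCorners} to the dynamical system $(X,\mathscr{B},\mu)=(\Omega,\haar)$ (equipped with the Loeb $\sigma$-algebra) and the commuting actions $T_1=L_1$, $T_2=L_2$. The three hypotheses to verify are: that $G$ is minimally almost periodic (given), that $L_1$ and $L_2$ are commuting measure-preserving actions by measurable maps, and that $\haar$ is ergodic for the combined $G\times G$ action $(g_1,g_2)\mapsto L_1^{g_1}L_2^{g_2}$. The middle condition is routine under the canonical identification $\Omega\simeq G\times G$ coming from the commutation of ultraproducts with finite direct products: $L_1$ acts on the first factor and $L_2$ on the second, so they commute, and each $L_i^g$ is internal (induced by a sequence of left translations on the $G_n\times G_n$) and hence Loeb-measurable and $\haar$-preserving.

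The main obstacle is ergodicity of the combined $G\times G$ action, which under the identification $\Omega\simeq G\times G$ is the left regular action of the ultraproduct group $G\times G$ on itself. I plan to first observe that the sequence $n\mapsto G_n\times G_n$ is itself quasirandom: every nontrivial irreducible representation of $G_n\times G_n$ is an external tensor product $\rho\otimes\sigma$ with at least one factor nontrivial, so its dimension is at least $D_n\to\infty$. Therefore $G\times G$ is itself an ultra quasirandom group. Next, the left regular action of an ultra quasirandom group on its Loeb space is weakly mixing: the Schur orthogonality relations give $\|\langle L^g h_1,h_2\rangle\|_{\lp^2(H_n)}\le D_n^{-1/2}\|h_1\|_2\|h_2\|_2$ for any mean-zero $h_1,h_2\in\lp^2(H_n)$ on a finite $D_n$-quasirandom group $H_n$, and passing to the ultraproduct forces $\langle L^g h_1,h_2\rangle=0$ for $\haar$-a.e.\ $g$ in $\Omega$. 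This rules out any non-constant $(G\times G)$-invariant function in $\lp^2(\Omega,\haar)$, giving the required ergodicity; a version of this weak mixing is established in \cite{MR3177376}.

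With all three hypotheses verified, Theorem~\ref{thm:mapCorners} applies verbatim to the system $(\Omega,\haar,L_1,L_2)$ and yields
\[
\lim_{g\to\ultra{p}}\int f_0\cdot L_1^g f_1\cdot L_1^g L_2^g f_2\intd\haar
\ge
\left(\int f_0^{1/4}f_1^{1/4}f_2^{1/4}\intd\haar\right)^4
\]
for every minimal idempotent ultrafilter $\ultra{p}$ on $G$ and every triple of nonnegative measurable functions $f_0,f_1,f_2$ on $\Omega$, completing the proposed proof. The one subtle point hidden in the above is the passage from the uniform Schur bound on the finite $H_n=G_n\times G_n$ to the vanishing of the limiting matrix coefficient on the Loeb space; this is routine but deserves to be spelled out carefully, as it is where quasirandomness of the sequence is genuinely used.
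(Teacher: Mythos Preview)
Your proposal is correct and follows essentially the same route as the paper: deduce Theorem~\ref{thm:uqrTriangles} from Theorem~\ref{thm:mapCorners} after verifying that the $G\times G$ action on $(\Omega,\haar)$ is ergodic, the latter via the mixing property of ultra quasirandom groups established in \cite{MR3177376}. The paper invokes \cite[Lemma~33]{MR3177376} directly to get $\haar(B\cap (L^x)^{-1}B)=\haar(B)^2$ for almost every $x$, whereas you spell out the underlying Schur-orthogonality computation; these are the same argument. One small point worth making explicit in your write-up: the hypothesis is only that the ultraproduct $G$ is minimally almost periodic, not that the $G_n$ are $D_n$-quasirandom with $D_n\to\infty$, so you should note that the former implies the latter along the chosen ultrafilter on $\N$ before invoking the tensor-product bound for $G_n\times G_n$.
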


Theorem~\ref{thm:uqrTriangles} requires that the ultra quasirandom group determined by $n \mapsto G_n$ is minimally almost periodic.
In \cite{MR3177376} it was shown that any ultraproduct of the sequence $n \mapsto \SL(2,\mathbb{F}_{p^n})$ is minimally almost periodic.
More recently, work by Yang~\cite{arxiv:1405.6276} provides many examples of classes $\mathcal{F}$ of groups with the property that the ultraproduct of any quasirandom sequence $n \mapsto G_n$ in $\mathcal{F}$ is minimally almost periodic.
Such classes are called \define{q.u.p.\ (quasirandom ultra product)} classes.
For example, the class of finite, quasisimple groups is q.u.p.\ by \cite[Corollary~1.12]{arxiv:1405.6276}.

\begin{theorem}
\label{thm:syndetic}
Let $\mathcal{F}$ be a q.u.p.\ class of finite groups.
For every $0<\alpha<1$ and every $\epsilon>0$ there exist $D,K\in\N$ such that for every $D$-quasirandom group $G\in\mathcal{F}$ and every $A\subset G\times G$ with $|A|\geq\alpha |G|^{2}$ the set
\begin{equation}
\label{eqn:triangleReturns}
\left\{ g \in G : \frac{|A\cap (1,g)^{-1}A \cap (g,g)^{-1}A|}{|G|^2} > \alpha^{4}-\epsilon\right\}
\end{equation}
has the property that at most $K$ of its right shifts are needed to cover $G$.
\end{theorem}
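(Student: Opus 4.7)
We argue by contradiction and compactness. If the conclusion fails, then there exist $0<\alpha<1$ and $\epsilon>0$ such that for each $n\in\N$ we can select a $D_n$-quasirandom group $G_n\in\mathcal{F}$ with $D_n\ge n$, together with a set $A_n\subseteq G_n\times G_n$ satisfying $|A_n|\ge\alpha|G_n|^2$, for which the set
\[
E_n := \left\{g\in G_n : \tfrac{|A_n\cap(1,g)^{-1}A_n\cap(g,g)^{-1}A_n|}{|G_n|^2}>\alpha^4-\epsilon\right\}
\]
cannot be covered by $n$ right translates. Fix a non-principal ultrafilter $\ultra{u}$ on $\N$ and form the ultraproduct $G=\prod_{\ultra{u}}G_n$; since $\mathcal{F}$ is q.u.p.\ and $D_n\to\infty$, $G$ is minimally almost periodic. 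The ultraproduct $\Omega$ of $G_n\times G_n$ is then precisely the setting of Theorem~\ref{thm:uqrTriangles}: it is canonically $G\times G$ as a group, carries Loeb measure $\haar$, and admits the commuting $G$-actions $L_1,L_2$ by left multiplication in each coordinate. Setting $A:=\prod_{\ultra{u}}A_n$ yields an internal subset of $\Omega$ with $\haar(A)\ge\alpha$.

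Apply Theorem~\ref{thm:uqrTriangles} with $f_0=f_1=f_2=\mathbf{1}_A$ to obtain
\[
\lim_{g\to\ultra{p}}\int\mathbf{1}_A\cdot L_1^g\mathbf{1}_A\cdot L_1^gL_2^g\mathbf{1}_A\intd\haar\ \ge\ \haar(A)^4\ \ge\ \alpha^4
\]
for every minimal idempotent ultrafilter $\ultra{p}$ on $G$. For any $g=[g_n]\in G$, the integral is the Loeb measure of an internal triangle-intersection and, up to the translation invariance of $\haar$, equals the standard part of $\lim_{\ultra{u}}|A_n\cap(1,g_n)^{-1}A_n\cap(g_n,g_n)^{-1}A_n|/|G_n|^2$. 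Consequently the set $\mathcal{E}:=\{g\in G : \text{this integral exceeds }\alpha^4-\epsilon/2\}$ is contained in the internal set $E:=\prod_{\ultra{u}}E_n$, and the above display forces $\mathcal{E}\in\ultra{p}$, hence $E\in\ultra{p}$, for every minimal idempotent $\ultra{p}$ on $G$.

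By a standard fact about the Stone--\v{C}ech compactification of a discrete group (every minimal left ideal of $\beta G$ contains a minimal idempotent, and a subset of $G$ meeting every minimal left ideal is right-syndetic), $E$ is right-syndetic in $G$: there exist $L\in\N$ and $h_1,\ldots,h_L\in G$ with $G=\bigcup_{i=1}^{L}Eh_i$. Writing each $h_i=[h_{n,i}]$, this is a first-order statement in the language of $G$ augmented by the predicate $E$ and the constants $h_i$, so by {\L}o\'{s}'s theorem the set $\{n : G_n=\bigcup_{i=1}^{L}E_nh_{n,i}\}$ belongs to $\ultra{u}$. But by construction $E_n$ is not covered by $n$, and hence for $n>L$ not by $L$, right translates; this set is therefore contained in $\{1,\ldots,L\}$ and cannot belong to the non-principal ultrafilter $\ultra{u}$, a contradiction.

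The main technical point is the identification, via the Loeb correspondence, of the ergodic-theoretic conclusion of Theorem~\ref{thm:uqrTriangles} with membership in the internal set $E$, so that the finite-cover property of $E$ transfers back to the $G_n$. All the real combinatorial work is performed by Theorem~\ref{thm:uqrTriangles}; the remainder is ultraproduct compactness and first-order transfer.
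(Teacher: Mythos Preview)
Your proof is correct and follows essentially the same route as the paper: contradict, pass to an ultraproduct, invoke Theorem~\ref{thm:uqrTriangles} (the paper cites the equivalent Theorem~\ref{thm:mu4}) to see that the set of good $g$ is right central$^{*}$ and hence right syndetic, then transfer this first-order statement back to the $G_n$ via \L o\'s. The only cosmetic slips are that ``minimal left ideal'' should read ``minimal right ideal'' to match the paper's conventions, and the phrase ``up to translation invariance of $\haar$'' is hiding the harmless coordinate swap between $(g,1)^{-1}A$ and $(1,g)^{-1}A$; neither affects the argument.
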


It would be interesting to obtain a version Theorem~\ref{thm:syndetic} with explicit description of $D$ and $K$.
Such a proof may also shed light on the question of how large the set \eqref{eqn:triangleReturns} can be: we conjecture that its density in $G$ should tend to 1 as $D \to \infty$ in analogy with Theorem~\ref{thm:bt2Mixing}, but have been unable to prove this using our techniques.%
\footnote{These problems have been solved by  Austin \cite{arxiv:1503.08746} after the completion of this article.}
Combined with the fact that the non-cyclic finite simple groups are quasirandom in the sense that the minimal dimension of a non-trivial irreducible representation grows with the order of the group (see \cite[Theorem 4.7]{MR2410393}), Theorem~\ref{thm:syndetic} yields the following consequence:
\begin{corollary}
\label{cor:syndetic}
For every $0<\alpha<1$ and every $\epsilon>0$ there exist $N,K\in\N$ such that for every non-cyclic finite simple group $G$ of order at least $N$ and every $A\subset G\times G$ with $|A|\geq\alpha |G|^{2}$ the set \eqref{eqn:triangleReturns} has the property that at most $K$ of its right shifts are needed to cover $G$.
\end{corollary}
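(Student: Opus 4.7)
The plan is to deduce Corollary~\ref{cor:syndetic} directly from Theorem~\ref{thm:syndetic} by taking $\mathcal{F}$ to be the class of non-cyclic finite simple groups. First I would verify that this $\mathcal{F}$ is itself a q.u.p.\ class. Every non-cyclic finite simple group is quasisimple (simple groups are quasisimple), so $\mathcal{F}$ is a subclass of the q.u.p.\ class provided by \cite[Corollary~1.12]{arxiv:1405.6276}. The q.u.p.\ property is manifestly hereditary under passage to subclasses, since it only asserts that every quasirandom sequence drawn from the class has a minimally almost periodic ultraproduct, and any quasirandom sequence in $\mathcal{F}$ is in particular a quasirandom sequence of finite quasisimple groups.

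Given $\alpha \in (0,1)$ and $\epsilon > 0$, I would then apply Theorem~\ref{thm:syndetic} to this $\mathcal{F}$ to produce constants $D, K \in \N$. To convert the $D$-quasirandomness hypothesis into a bound on the order of $G$, I invoke the quasirandomness estimate for non-cyclic finite simple groups, namely \cite[Theorem~4.7]{MR2410393}, which states that the minimal dimension $D(G)$ of a non-trivial complex irreducible representation tends to infinity with $|G|$ as $G$ ranges over non-cyclic finite simple groups. Hence there exists $N \in \N$ (depending only on $D$) such that every non-cyclic finite simple group $G$ with $|G| \ge N$ is $D$-quasirandom, and therefore falls within the scope of Theorem~\ref{thm:syndetic}. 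The conclusion of that theorem, applied to any $A \subset G \times G$ with $|A| \ge \alpha |G|^2$, is verbatim the statement of the corollary with the same $K$.

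There is no real obstacle here: the corollary is a packaging result, and the only minor points requiring care are (i) the observation that q.u.p.\ is inherited by subclasses, and (ii) the correct invocation of Gowers's bound on $D(G)$ for non-cyclic simple $G$. All the substantive content lives in Theorem~\ref{thm:syndetic} and the underlying Theorem~\ref{thm:mapCorners}.
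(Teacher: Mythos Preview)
Your proposal is correct and follows essentially the same approach as the paper: the corollary is deduced from Theorem~\ref{thm:syndetic} by observing that non-cyclic finite simple groups form a q.u.p.\ class (as a subclass of the quasisimple groups handled by \cite[Corollary~1.12]{arxiv:1405.6276}) and then invoking \cite[Theorem~4.7]{MR2410393} to convert the $D$-quasirandomness threshold into a lower bound $N$ on the order of $G$. Your explicit remark that the q.u.p.\ property is inherited by subclasses is a point the paper leaves implicit.
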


We conclude by mentioning that an alternative method for forming a limiting group from a quasirandom sequence is available when the sequence is increasing.
Combinatorially, it gives many $g \in A$ for which $A \cap (1,g)^{-1} A \cap (g,g)^{-1}A \ne \emptyset$, but does not give much information about the size of the intersection.
Given a sequence $n \mapsto G_n$ of groups such that $G_n \incl G_{n+1}$ for all $n \in \mathbb{N}$, the direct limit $G$ is the union of the embeddings $G_n \incl G$ (see \cite[Page~23]{MR1357169}) and is therefore amenable.
If, in addition, the sequence is quasirandom, then $G$ is minimally almost periodic.
%There are many examples of amenable, minimally almost periodic groups: see \cite{MR3071509} and \cite{arxiv:1405.7605}.
For such sequences we apply \cite[Corollary~4.9]{arxiv:1402.4736} to obtain the following combinatorial result.

\begin{theorem}
\label{thm:increasingGroups}
Let $n \mapsto G_n$ be a quasirandom sequence such that $G_n \incl G_{n+1}$ for all $n \in \mathbb{N}$.
For every $\alpha > 0$ and every $\epsilon > 0$ there is $N \in \mathbb{N}$ such that, for any $n \ge N$ and any $A \subset G_n \times G_n$ with $|A| \ge \alpha |G_n|^2$ there are $(1 - \epsilon)|G_n|$ many $g \in G_n$ for which $A \cap (1,g)^{-1} A \cap (g,g)^{-1}A$ is non-empty.
\end{theorem}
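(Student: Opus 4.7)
The plan is to argue by contradiction, transferring a hypothetical failure of Theorem~\ref{thm:increasingGroups} into a dynamical statement about the direct limit $G=\bigcup_{n}G_{n}$ and then invoking \cite[Corollary~4.9]{arxiv:1402.4736}.

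First I verify that $G$ is a countable amenable minimally almost periodic group. Amenability is immediate because $G$ is an ascending union of finite subgroups, and in fact $(G_{n})$ itself is a F\o{}lner sequence for $G$: any $g\in G$ lies in some $G_{N}$, and then $gG_{n}=G_{n}$ for every $n\ge N$. For minimal almost periodicity, any nontrivial finite-dimensional unitary representation $\rho$ of $G$ would restrict to each $G_{n}$ as a representation of dimension at most $\dim\rho$; quasirandomness ($D_{n}\to\infty$) forces $\rho|_{G_{n}}$ to be trivial for all sufficiently large $n$, whence $\rho$ is trivial on $G=\bigcup_{n}G_{n}$.

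Next, assume for contradiction that the conclusion fails for some $\alpha,\epsilon>0$. After passing to a subsequence we obtain $A_{n}\subset G_{n}\times G_{n}$ with $|A_{n}|\ge\alpha|G_{n}|^{2}$ for which the bad sets
\[
E_{n}=\{g\in G_{n}:A_{n}\cap(1,g)^{-1}A_{n}\cap(g,g)^{-1}A_{n}=\emptyset\}
\]
satisfy $|E_{n}|\ge\epsilon|G_{n}|$. Fix a nonprincipal ultrafilter on $\N$ and let $\Omega$ be the ultraproduct of $(G_{n}\times G_{n})$ equipped with Loeb measure $\haar$. Since $(G_{n})$ is increasing, left multiplication in the first and second coordinates defines commuting measure-preserving actions $L_{1},L_{2}$ of $G$ on $(\Omega,\haar)$. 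The internal set $A=[A_{n}]$ has $\haar(A)\ge\alpha$, the internal set $[E_{n}]$ corresponds to a subset $E\subset G$ of density at least $\epsilon$ along the F\o{}lner sequence $(G_{n})$, and for every $g\in E$ one has
\[
\haar\!\left(A\cap(L_{2}^{g})^{-1}A\cap(L_{1}^{g}L_{2}^{g})^{-1}A\right)=0.
\]
Finally, \cite[Corollary~4.9]{arxiv:1402.4736} applied to the commuting actions $L_{1},L_{2}$ of the amenable minimally almost periodic group $G$ on $(\Omega,\haar)$ asserts that for every $\delta>0$ the set of $g\in G$ for which the above correlation exceeds $\haar(A)^{4}-\delta$ has density $1$ along every F\o{}lner sequence of $G$, and in particular along $(G_{n})$. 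Choosing $\delta<\alpha^{4}$ contradicts the positive density of $E$ and closes the argument.

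The main obstacle is setting up the correspondence principle cleanly. One must verify that left multiplication by each fixed $g\in G$ really induces a measure-preserving automorphism of the Loeb ultraproduct (which works because any such $g$ lies internally in $G_{n}$ for all large $n$), that $L_{1}$ and $L_{2}$ commute and preserve $\haar$, and that any ergodicity hypothesis needed in \cite[Corollary~4.9]{arxiv:1402.4736} is automatic here (the $G\times G$ action $(g_{1},g_{2})\mapsto L_{1}^{g_{1}}L_{2}^{g_{2}}$ is ergodic for Loeb measure because it is the ultraproduct of the transitive left-multiplication actions of $G_{n}\times G_{n}$ on itself). A minor additional step, if the cited corollary is stated for standard probability spaces, is to descend to a countably generated sub-$\sigma$-algebra containing $A$ and closed under the countable group $G$ of actions before applying it.
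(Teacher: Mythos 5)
Your overall strategy (contradiction, pass to a limit object, apply \cite[Corollary~4.9]{arxiv:1402.4736}) is the right one, and your verification that the direct limit $G=\bigcup_n G_n$ is amenable and minimally almost periodic is correct and matches the paper. However, your choice of limit object --- the Loeb ultraproduct $\Omega$ of $G_n\times G_n$ acted on by the \emph{direct limit} $G$ --- breaks the argument at two essential points. First, the transfer of the bad sets: the hypothesis $|E_n|\ge\epsilon|G_n|$ does \emph{not} produce a subset $E\subset G$ of positive density along $(G_N)$. The only candidate is $E=\{g\in G: g\in E_n \text{ for } \mathfrak{u}\text{-many } n\}$, and this can be empty: if the indices $[G_n:G_{n-1}]$ are large, one may take $E_n\subset G_n\setminus G_{n-1}$ with $|E_n|\ge\epsilon|G_n|$, and then every $g\in G$ lies in $G_N$ for some $N$ and hence in $E_n$ for only finitely many $n$. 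The density of $E_n$ inside $G_n$ simply does not control the density of any subset of the direct limit, so no contradiction with the density-one conclusion of Corollary~4.9 is obtained. Second, ergodicity is \emph{not} automatic: your "ultraproduct of transitive actions" reasoning applies to the ultraproduct group acting on $\Omega$ (this is the paper's Lemma via \cite[Lemma~33]{MR3177376}), not to the countable direct limit, whose action on $\Omega$ is very far from transitive. Concretely, if $B_n$ is a union of roughly half of the right cosets of $G_{m(n)}\times G_{m(n)}$ in $G_n\times G_n$ with $m(n)\to\infty$ slowly, then $[B_n]$ is invariant under left multiplication by every element of $G\times G$ and has Loeb measure about $1/2$, so the system to which you want to apply Corollary~4.9 need not be ergodic. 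Descending to a countably generated invariant sub-$\sigma$-algebra does not repair either defect.

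The paper avoids both problems by not using the ultraproduct at all for this theorem: it sets $A=\bigcup_n A_n\subset G\times G$, observes that $A$ has upper density at least $\alpha$ along the F\o{}lner sequence $N\mapsto G_N\times G_N$, and invokes the Furstenberg correspondence principle \cite[Lemma~5.2]{arxiv:1309.6095} to obtain an \emph{ergodic} $G\times G$-system $(X,\mu)$ and an open set $U$ with $\mu(U)\ge\alpha$ such that $\upperdens_\Psi(A\cap(1,g)^{-1}A\cap(g,g)^{-1}A)\ge\mu(U\cap (T_1^g)^{-1}U\cap(T_1^gT_2^g)^{-1}U)$ for all $g$. Corollary~4.9 then gives a full-density set of $g\in G$ with positive correlation, which is played off against the negated conclusion inside $G_n$ for large $n$. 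The ultraproduct--Loeb route is reserved in the paper for Theorem~\ref{thm:syndetic}, where the acting group is the ultraproduct group itself (so ergodicity holds) and where the relevant largeness notion, syndeticity, is first order and therefore transfers; neither of these features is available in your setup.
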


The rest of the paper runs as follows.
In Section~\ref{sec:systems} we define the  categories of dynamical systems we will work with and recall the Jacobs--de Leeuw--Glicksberg decomposition.
A version of satedness suitable for our needs is developed in Section~\ref{sec:satedness}.
In Sections~\ref{sec:minimalIdempotents} and ~\ref{sec:mapGroups} we present the necessary facts regarding minimal idempotent ultrafilters and minimally almost periodic groups respectively.
Theorem~\ref{thm:characteristicFactors} is proved in Section~\ref{sec:chf} and Theorem~\ref{thm:mapCorners} is proved in Section~\ref{sec:strongRec}.
Lastly, the combinatorial results mentioned above are proved in Section~\ref{sec:combinatorics}.

We would like to thank the referees for constructive and thorough reports.

\section{Topological and Measure-preserving dynamical systems}
\label{sec:systems}

In this section various categories of dynamical systems are defined that will be used throughout the paper.

\begin{definition}
Let $G$ be a discrete group.
The objects of the category $\cat(G)$ are the left actions $T : G \times X \to X$ of $G$ on compact, Hausdorff spaces $X$ having the property that each of the induced maps $T^g : X \to X$ is continuous.
Objects in $\cat(G)$ will be called \define{systems} and denoted $\mathbf{X}=(X,T)$.
Their defining continuous left actions will be called just \define{actions}.
The morphisms of the category $\cat$ are the continuous maps intertwining the $G$ actions on the domain and the codomain.
\end{definition}

Note that the objects of $\cat(G)$ may be actions on non-metrizable topological spaces.
This level of generality is needed in order to handle actions of very large groups on the Gelfand spaces of non-separable \cstar{}-algebras, which will play a role in the next section.

In our applications the acting group will be $G^{2}$ and an action in $\cat(G^2)$ will be written in the form $(g_1,g_2) \mapsto T_{1}^{g_1} T_{2}^{g_2}$ where $T_{1}$ and $T_{2}$ are commuting $G$ actions.
We also write $T_{12}^{g} = T_{1}^{g}T_{2}^{g}$.

For any system $\mathbf{X}$ we denote by $M_{\mathbf{X}}$ the set of Baire probability measures on $X$ that are $T$-invariant.
(Recall that the \define{Baire} sets are the members of the $\sigma$-algebra generated by the compact $G_{\delta}$ sets.)
In view of the Riesz--Markov--Kakutani representation theorem this set can be seen as a subset of the dual of the space $\cont(X)$ of continuous, real-valued functions on $X$ equipped with the uniform norm, and as such it is compact and convex with respect to the weak$^{*}$ topology.
The set of extreme points of $M_{\mathbf{X}}$ is denoted $\extr M_{\mathbf{X}}$.
It follows from the Radon--Nikodym theorem that the extreme points of $M_{\mathbf{X}}$ are precisely the ergodic measures, namely the measures for which every almost invariant Baire set has measure either $0$ or $1$.

\begin{definition}
The objects of the category $\cat[meas]$ are pairs $(\mathbf{X},\mu)$, where $\mu\in M_{\mathbf{X}}$, called \define{measure-preserving systems}.
A morphism $(\mathbf{Y},\nu) \to (\mathbf{X},\mu)$ in $\cat[meas]$ is any morphism $\pi : \mathbf{Y} \to \mathbf{X}$ in $\cat$ such that $\pi \nu = \mu$.
When there is a morphism $\pi : (\mathbf{Y},\nu) \to (\mathbf{X},\mu)$ in $\cat[meas]$ we call $\pi$ the \define{factor map} and say that $(\mathbf{Y},\nu)$ is an \define{extension} of $(\mathbf{X},\mu)$, or that $(\mathbf{X},\mu)$ is a \define{factor} of $(\mathbf{Y},\nu)$.
The category $\cat[erg]$ is the subcategory of $\cat[meas]$ whose objects are the pairs $(\mathbf{X},\mu)$ for which $\mu\in \extr M_{\mathbf{X}}$.
The objects of $\cat[erg]$ are called \define{ergodic systems}.
\end{definition}

\begin{lemma}
\label{lem:extremeLiftsErgodic}
Let $\psi : \mathbf{Y} \to \mathbf{X}$ be a morphism in $\cat$ and fix $\mu$ in $\extr M_{\mathbf{X}}$.
Then $\tilde{M}_\mathbf{Y} = \{ \nu \in M_\mathbf{Y} : \psi \nu = \mu \}$ is a compact, convex set and its extreme points are ergodic measures on $\mathbf{Y}$.
\end{lemma}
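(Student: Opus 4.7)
The plan is to verify the three properties (convexity, compactness, ergodicity of extreme points) separately, with most of the work going into the third.

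\medskip

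\noindent\textbf{Convexity and compactness.} Convexity of $\tilde{M}_{\mathbf{Y}}$ is immediate, since $\nu \mapsto \psi\nu$ is an affine map into $M_{\mathbf{X}}$ and the singleton $\{\mu\}$ is convex. For compactness, I would note that for any $f \in \cont(X)$ the map $\nu \mapsto \int f\intd(\psi\nu)=\int f\circ\psi\intd\nu$ is weak$^{*}$ continuous on $M_{\mathbf{Y}}$ because $f \circ \psi \in \cont(Y)$ by continuity of $\psi$. Hence the pushforward $\nu \mapsto \psi\nu$ is weak$^{*}$ continuous, so $\tilde{M}_{\mathbf{Y}}$ is the preimage of a closed (indeed singleton) set in the weak$^{*}$ compact set $M_{\mathbf{Y}}$, and is therefore compact.

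\medskip

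\noindent\textbf{Extreme points are ergodic.} I would argue by contrapositive. Suppose $\nu \in \tilde{M}_{\mathbf{Y}}$ is not ergodic, so that there is an almost $T$-invariant Baire set $B \subset Y$ with $0<\nu(B)<1$. Define the conditional measures
\[
\nu_{1}(E) = \frac{\nu(E \cap B)}{\nu(B)}, \qquad \nu_{2}(E) = \frac{\nu(E \cap B^{c})}{\nu(B^{c})}
\]
on the Baire $\sigma$-algebra. Both are probability measures, and a short calculation using $\nu(T^{g}B \symdiff B)=0$ for every $g\in G$ (together with invariance of $\nu$) shows that $\nu_{1}$ and $\nu_{2}$ belong to $M_{\mathbf{Y}}$. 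Pushing the obvious convex combination $\nu = \nu(B)\nu_{1} + \nu(B^{c})\nu_{2}$ forward by $\psi$ gives
\[
\mu = \psi\nu = \nu(B)\psi\nu_{1} + \nu(B^{c})\psi\nu_{2},
\]
a convex combination in $M_{\mathbf{X}}$. Here is where I would invoke the hypothesis that $\mu$ is extreme in $M_{\mathbf{X}}$: it forces $\psi\nu_{1} = \psi\nu_{2} = \mu$, i.e.\ $\nu_{1},\nu_{2} \in \tilde{M}_{\mathbf{Y}}$. Since $\nu_{1} \ne \nu_{2}$ (their supports are essentially disjoint), $\nu$ is then a nontrivial convex combination of two distinct elements of $\tilde{M}_{\mathbf{Y}}$, hence not extreme.

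\medskip

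\noindent\textbf{Main obstacle.} The only genuinely delicate point is the verification that $\nu_{1}$ and $\nu_{2}$ are $T$-invariant. Because the spaces here are allowed to be non-metrizable and the measures are only defined on the Baire $\sigma$-algebra, one cannot blindly manipulate null sets the way one would in the standard Borel setting. The calculation I have in mind is: for any Baire set $E$ and any $g \in G$,
\[
\nu(T^{g}E \cap B) = \nu(T^{g}E \cap T^{g}B) = \nu\bigl(T^{g}(E\cap B)\bigr) = \nu(E\cap B),
\]
where the first equality uses $\nu(B \symdiff T^{g}B)=0$ and the last uses $T^{g}$-invariance of $\nu$. Dividing by $\nu(B)$ yields $T^{g}$-invariance of $\nu_{1}$, and the same reasoning applies to $\nu_{2}$. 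Once that technical point is in hand, the rest of the argument is formal.
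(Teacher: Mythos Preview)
Your argument is correct and follows essentially the same route as the paper. The only cosmetic difference is that the paper first proves the inclusion $\extr \tilde M_{\mathbf{Y}} \subset \extr M_{\mathbf{Y}}$ abstractly (if $\lambda\in\extr\tilde M_{\mathbf{Y}}$ and $\lambda=c\lambda_{1}+(1-c)\lambda_{2}$ in $M_{\mathbf{Y}}$, push forward, use extremality of $\mu$ to get $\lambda_{1},\lambda_{2}\in\tilde M_{\mathbf{Y}}$, then use extremality of $\lambda$ there), and then invokes the Radon--Nikodym characterization ``extreme in $M_{\mathbf{Y}}$ $=$ ergodic'' already recorded in the text; you instead unpack that last step by hand, explicitly building $\nu_{1},\nu_{2}$ from an almost invariant set. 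Both amount to the same pushforward-and-extremality trick.
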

\begin{proof}
It is clearly closed and convex.
To see that its extreme points are ergodic, it suffices to show that $\extr \tilde M_{\mathbf{Y}} \subset \extr M_{\mathbf{Y}}$.
Indeed, suppose $\lambda\in \extr \tilde{M}_{\mathbf{Y}}$ can be written as $\lambda = c\lambda_{1} + (1-c)\lambda_{2}$ with $\lambda_{1},\lambda_{2}\in M_{\mathbf{Y}}$.
Then $c \pi_{*} \lambda_{1} + (1-c) \pi_{*} \lambda_{2} = \mu$, so by extremality of $\mu$ in $M_{\mathbf{X}}$ we have $\pi_{*}\lambda_{1}=\pi_{*}\lambda_{2}=\mu$.
By extremality of $\lambda$ in $\tilde{M}_{\mathbf{Y}}$ this implies $\lambda_{1}=\lambda_{2}=\lambda$.
\end{proof}

We will be concerned with sub-$\sigma$-algebras of invariant sets.
Given a measure space $(X,\mathscr{B},\mu)$ and sets $A,B \in \mathscr{B}$, write $A \sim B$ when $\mu(A \symdiff B) = 0$.

\begin{definition}
Let $(\mathbf{X},\mu)$ be a measure-preserving system in $\cat[meas](G)$.
A Baire subset $B \subset X$ is \define{almost invariant with respect to $\mu$} if $(T^g)^{-1} B \sim B$ for every $g \in G$.
Write $\alminv^\mu \mathbf{X}$ for the sub-$\sigma$-algebra generated by the almost invariant sets.
If $(\mathbf{X},\mu)$ is a measure-preserving system in $\cat[meas](G^2)$ and $i \in \{1,2,12\}$, write $\alminv_i^\mu \mathbf{X}$ for the sub-$\sigma$-algebra generated by the $T_i$ almost invariant sets.
Lastly, for any $i,j \in \{1,2,12\}$ define $\alminv_{i,j}^\mu \mathbf{X} = \alminv_i^\mu \mathbf{X} \vee \alminv_j^\mu \mathbf{X}$.
\end{definition}

We conclude this section by recalling a general version of the splitting of $\lp^2(\mathbf{X},\mu)$ into almost periodic and weakly mixing parts that will be needed in the proof of Theorem~\ref{thm:mapCorners}.
Let $G$ be a group and let $(\mathbf{X},\mu)$ be a measure preserving system in $\cat[meas](G)$.
Write $\mathscr{H}$ for $\lp^2(\mathbf{X},\mu)$.
Consider the collection $\mathscr{S} = \{T^g : g \in G \}$ of unitary operators on $\mathscr{H}$.
The closure with respect to the weak topology of any orbit of $\mathscr{S}$ is compact in the weak topology, so $\mathscr{S}$, in the terminology of \cite{MR0131784}, is a weakly almost periodic semigroup of operators.
Write $\overline{\mathscr{S}}$ for the closure of $\mathscr{S}$ in the weak operator topology.
Applying \cite[Corollary~4.12]{MR0131784} allows us to write $\mathscr{H} = \mathscr{H}_\mathsf{o} + \mathscr{H}_\mathsf{r}$ where
\begin{equation*}
\mathscr{H}_\mathsf{o} = \{ f \in \mathscr{H} : 0 \in \overline{\mathscr{S}} f \}
\end{equation*}
is the closed subspace of \define{flight vectors} and
\begin{equation*}
\mathscr{H}_\mathsf{r} = \{ f \in \mathscr{H} : \overline{\mathscr{S}} f = \overline{\mathscr{S}} h \textrm{ for every } h \in \overline{\mathscr{S}} f \}
\end{equation*}
is the closed subspace of \define{reversible vectors}.
Since $\mathscr{S}$ is a group \cite[Lemma~4.5]{MR0131784} implies that $\mathscr{H}_\mathsf{r}$ is spanned by the finite dimensional, $\mathscr{S}$-invariant subspaces of $\mathscr{H}$.
The splitting $\mathscr{H}_\mathsf{o} + \mathscr{H}_\mathsf{r}$ is determined by the unique projection in the kernel of $\overline{\mathscr{S}}$.
By \cite[Theorem~2.3(iv)]{MR0131784} this kernel is self-adjoint so the unique projection in the kernel is also self-adjoint, implying that the above splitting is orthogonal.

Let $(\mathbf{X}_1,\mu_1)$ and $(\mathbf{X}_2,\mu_2)$ be measure-preserving systems in $\cat[meas](G)$ and let $\mathscr{H}_1$ and $\mathscr{H}_2$ be the corresponding Hilbert spaces.
The Hilbert space corresponding to the product system $(\mathbf{X}_1 \times \mathbf{X}_2, \mu_1 \times \mu_2)$ is $\mathscr{H}_1 \otimes \mathscr{H}_2$.
We will need the following result, which relates the splittings of these Hilbert spaces, in the proof of Theorem~\ref{thm:mapCorners}.

\begin{lemma}
\label{lem:jdgSplitting}
Let $(\mathbf{X}_1,\mu_1)$ and $(\mathbf{X}_2,\mu_2)$ be measure-preserving systems in $\cat[meas](G)$ and let $\mathscr{H}_1$ and $\mathscr{H}_2$ be the associated $\lp^2$ spaces.
Let $\mathscr{H}$ be the $\lp^2$ space of the product measure-preserving system $(\mathbf{X}_1 \times \mathbf{X}_2, \mu_1 \otimes \mu_2)$.
Then $\mathscr{H}_{1,\mathsf{r}} \otimes \mathscr{H}_{2,\mathsf{r}} = \mathscr{H}_\mathsf{r}$.
\end{lemma}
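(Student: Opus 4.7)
The plan is to establish two set inclusions and then invoke uniqueness of the JdLG splitting. Write $\mathscr{H} = (\mathscr{H}_{1,\mathsf{r}} \oplus \mathscr{H}_{1,\mathsf{o}}) \otimes (\mathscr{H}_{2,\mathsf{r}} \oplus \mathscr{H}_{2,\mathsf{o}})$ and distribute into four orthogonal pieces. I will show that the $(\mathsf{r},\mathsf{r})$ piece is contained in $\mathscr{H}_\mathsf{r}$ and that each of the other three pieces is contained in $\mathscr{H}_\mathsf{o}$. Since $\mathscr{H} = \mathscr{H}_\mathsf{r} \oplus \mathscr{H}_\mathsf{o}$ is orthogonal, equality of the $(\mathsf{r},\mathsf{r})$ summand with $\mathscr{H}_\mathsf{r}$ follows.

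For the inclusion $\mathscr{H}_{1,\mathsf{r}} \otimes \mathscr{H}_{2,\mathsf{r}} \subseteq \mathscr{H}_\mathsf{r}$, recall that $\mathscr{H}_{i,\mathsf{r}}$ is the closed linear span of the finite-dimensional $T_i$-invariant subspaces. If $V_i \subseteq \mathscr{H}_i$ is such a subspace, then $V_1 \otimes V_2$ is finite-dimensional and invariant under $T_1^g \otimes T_2^g$ for every $g$, hence sits inside $\mathscr{H}_\mathsf{r}$. Taking closed linear spans and using that $\mathscr{H}_\mathsf{r}$ is closed gives the inclusion.

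For the three mixed pieces it suffices, by closedness of $\mathscr{H}_\mathsf{o}$ and by density of simple tensors, to prove that a simple tensor $f_1 \otimes f_2$ with $f_1 \in \mathscr{H}_{1,\mathsf{o}}$ and $f_2 \in \mathscr{H}_2$ (or symmetrically) is a flight vector for the product action. By definition of $\mathscr{H}_{1,\mathsf{o}}$ there is a net $(g_\alpha)$ in $G$ with $T_1^{g_\alpha} f_1 \to 0$ weakly in $\mathscr{H}_1$. The orbit $\{T_2^{g_\alpha} f_2\}$ lies in a weakly compact set, so after passing to a subnet we may assume $T_2^{g_\alpha} f_2 \to h$ weakly in $\mathscr{H}_2$ for some $h$. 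Against any elementary tensor $a \otimes b$ one has
\[
\langle T_1^{g_\alpha} f_1 \otimes T_2^{g_\alpha} f_2,\, a \otimes b\rangle = \langle T_1^{g_\alpha} f_1, a\rangle \langle T_2^{g_\alpha} f_2, b\rangle \to 0,
\]
and the uniform bound $\|T_1^{g_\alpha} f_1 \otimes T_2^{g_\alpha} f_2\| \le \|f_1\|\|f_2\|$ promotes this to weak convergence to $0$ against arbitrary elements of $\mathscr{H}_1 \otimes \mathscr{H}_2$. Hence $0 \in \overline{\mathscr{S}}(f_1 \otimes f_2)$, so $f_1 \otimes f_2 \in \mathscr{H}_\mathsf{o}$. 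The symmetric argument handles $\mathscr{H}_1 \otimes \mathscr{H}_{2,\mathsf{o}}$.

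The main technical point to be careful about is the passage from weak convergence on simple tensors to weak convergence on all of $\mathscr{H}_1 \otimes \mathscr{H}_2$, which requires uniform operator bounds; this is free here since the product action is unitary. A secondary subtlety is that subnets must be extracted respecting weak compactness of orbit closures, which is available precisely because $T_2$ is a weakly almost periodic semigroup on $\mathscr{H}_2$, as already used in defining the JdLG decomposition of each factor.
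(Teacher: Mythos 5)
Your proof is correct and follows essentially the same route as the paper's: decompose $\mathscr{H}$ into the four tensor summands, note that $\mathscr{H}_{1,\mathsf{r}}\otimes\mathscr{H}_{2,\mathsf{r}}\subset\mathscr{H}_{\mathsf{r}}$ via finite-dimensional invariant subspaces, and show that each mixed summand consists of flight vectors. The paper merely asserts these two inclusions without elaboration, so your write-up simply supplies the details (the net argument, uniform boundedness, and the comparison of the two orthogonal decompositions) that the published proof leaves implicit.
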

\begin{proof}
Write
\[
\mathscr{H}_{\mathsf{o}} \oplus \mathscr{H}_{\mathsf{r}}
=
\mathscr{H}
=
(\mathscr{H}_{1,\mathsf{r}} \otimes \mathscr{H}_{2,\mathsf{r}})
\oplus
(\mathscr{H}_{1,\mathsf{o}} \otimes \mathscr{H}_{2,\mathsf{r}})
\oplus
(\mathscr{H}_{1,\mathsf{r}} \otimes \mathscr{H}_{2,\mathsf{o}})
\oplus
(\mathscr{H}_{1,\mathsf{o}} \otimes \mathscr{H}_{2,\mathsf{o}})
\]
and note that $\mathscr{H}_{1,\mathsf{r}} \otimes \mathscr{H}_{2,\mathsf{r}} \subset \mathscr{H}_\mathsf{r}$, whereas
 if $f \in \mathscr{H}_{1,\mathsf{o}}$ and $g \in \mathscr{H}_2$, or if $f \in \mathscr{H}_1$ and $g \in \mathscr{H}_{2,\mathsf{o}}$, then $f \otimes g \in \mathscr{H}_\mathsf{o}$.
\end{proof}

\section{Satedness}
\label{sec:satedness}

Satedness, introduced by Austin~\cite{MR2599882,arxiv:1309.4315} to prove convergence of multiple ergodic averages, is a property that a measure-preserving system may possess with respect to certain classes of systems.
For example, one could speak of satedness with respect to the class of Kronecker systems.
Although the concept depends critically on an invariant measure, it will be convenient to consider classes defined by topological, rather than measure-theoretic properties.
We will therefore consider satedness with respect to idempotent classes, defined below, in $\cat$.
Recall that a \define{joining} of objects $\mathbf{X}_1,\dots,\mathbf{X}_n$ in $\cat$ is an object $\mathbf{Z}$ in $\cat$ together with factor maps $\mathbf{Z} \to \mathbf{X}_i$ for each $1 \le i \le n$.

\begin{definition}
An \define{idempotent class} in $\cat$ is a class that contains the one-point system, is closed under joinings of finitely many systems, and is closed under arbitrary inverse limits.
\end{definition}

It follows from Zorn's lemma that every idempotent class $\mathcal{I}$ defines a map $\mathcal{I} : \mathbf{X} \mapsto \mathcal{I}\mathbf{X}$ on $\cat$ that associates to every system its maximal factor in $\mathcal{I}$.
We remark that this map can be seen as a natural transformation from the identity functor on $\cat$.

The following idempotent classes on $\cat(G^2)$ will play a crucial role in what follows.

\begin{definition}
Let $G$ be a group and let $\mathbf{X}$ be a system in $\cat(G^2)$.
For $i=1,2,12$ let $\inv_{i}$ be the idempotent class of systems on which the action $T_{i}$ is trivial.
Define $\inv_{i,j}=\inv_{i}\vee\inv_{j}$ for any $i,j \in \{1,2,12\}$.
\end{definition}

Note that, for any $\mu \in M_\mathbf{X}$ the space $\lp^{2}(\inv_{1,2}\mathbf{X},\mu)$ is a priori smaller than $\lp^{2}(\alminv^\mu_{1,2}\mathbf{X},\mu)$.

We now turn to our version of satedness, which is based on Austin's definition in \cite{arxiv:1309.4315}, but differs in that the energy increment \eqref{eqn:satedness} below is quantified, rather than being required to vanish.

\begin{definition}
\label{def:sated}
Let $\cat[]$ be a subcategory of $\cat[meas]$ and let $\mathcal{I}$ be an idempotent class (or, more generally, a natural transformation from the identity functor in $\cat$).
A measure-preserving system $(\mathbf{X},\mu)$ is called \define{$(\epsilon,f,\mathcal{I})$ sated in $\mathcal{C}$} for $\epsilon \geq 0$ and $f \in \lp^2(\mathbf{X},\mu)$ if one of the following equivalent conditions holds.
\begin{enumerate}
\item For every extension $\pi : (\mathbf{Y},\nu) \to (\mathbf{X},\mu)$ in $\cat[]$ we have
\begin{equation}
\label{eqn:satedness}
\nbar \condex[\nu]{f \circ \pi}{\mathcal{I}\mathbf{Y}} - \condex[\mu]{f}{\mathcal{I}\mathbf{X}} \circ \pi \nbar_{2} \leq \epsilon.
\end{equation}
\item For every extension $\pi : (\mathbf{Y},\nu) \to (\mathbf{X},\mu)$ in $\cat[]$ and every $\phi \in \cont(\mathcal{I}\mathbf{Y})$ we have
\begin{equation}
\label{eqn:satedEquiv}
\left| \int f \circ \pi \cdot \phi \intd\nu - \int \condex[\mu]{f}{\mathcal{I}\mathbf{X}} \circ \pi \cdot \phi \intd\nu \right| \leq \epsilon \nbar \phi \nbar_{2}.
\end{equation}
\end{enumerate}
\end{definition}

The conditions \eqref{eqn:satedness} and \eqref{eqn:satedEquiv} are equivalent because $\nbar v \nbar = \sup \left\{ |\langle v,w \rangle| : \nbar w \nbar = 1 \right\}$ in any Hilbert space and $\cont(\mathcal{I}\mathbf{Y})$ is dense in $\lp^2(\mathcal{I} \mathbf{Y},\mu)$.

The following result shows that every measure-preserving system has an extension that is sated up to any prescribed error.
We will be able to do this without the use of inverse limits because the energy increment in \eqref{eqn:satedness} is only required to be small rather than to vanish.
This extends the applicability of satedness to categories where inverse limits may not exist.

\begin{theorem}
\label{thm:epsilon-sated}
Let $\cat[]$ be a subcategory of $\cat[meas]$ and let $\mathcal{I}$ be an idempotent class in $\cat$, or more generally a natural transformation from the identity functor in $\cat$.
Let $(\mathbf{X},\mu)$ be a measure-preserving system in the category $\cat[]$.
Then for any $\epsilon > 0$ and any $f \in \lp^2(\mathbf{X},\mu)$ there exists an extension $\psi : (\mathbf{Y},\nu) \to (\mathbf{X},\mu)$ of measure-preserving systems such that $(\mathbf{Y},\nu)$ is $(\epsilon,f\circ\psi,\mathcal{I})$ sated in $\cat[]$.
\end{theorem}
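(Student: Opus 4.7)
The plan is to obtain the sated extension by an energy increment argument. In the vanishing-error form of satedness, Austin's approach proceeds via an inverse limit over a transfinite chain of extensions; the advantage of allowing a positive error $\epsilon$ is that only finitely many increment steps are then required, since each step forces a uniformly bounded quantity to grow by at least $\epsilon^{2}$.

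First I would prove an energy increment lemma: if $(\mathbf{X},\mu)$ fails to be $(\epsilon,f,\mathcal{I})$-sated in $\cat[]$, then some extension $\pi\colon(\mathbf{Y},\nu)\to(\mathbf{X},\mu)$ in $\cat[]$ satisfies
\begin{equation*}
\nbar \condex[\nu]{f\circ\pi}{\mathcal{I}\mathbf{Y}} \nbar_{2}^{2} \geq \nbar \condex[\mu]{f}{\mathcal{I}\mathbf{X}} \nbar_{2}^{2} + \epsilon^{2}.
\end{equation*}
The naturality of $\mathcal{I}$ supplies a morphism $\mathcal{I}\mathbf{Y}\to\mathcal{I}\mathbf{X}$ forming a commuting square with $\pi$, so $\condex[\mu]{f}{\mathcal{I}\mathbf{X}}\circ\pi$ coincides with the conditional expectation of $f\circ\pi$ onto the sub-$\sigma$-algebra $\mathcal{A}$ on $\mathbf{Y}$ pulled back from $\mathcal{I}\mathbf{X}$, and $\mathcal{A}$ is contained in the sub-$\sigma$-algebra induced by $\mathbf{Y}\to\mathcal{I}\mathbf{Y}$. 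The tower property then forces $\condex[\nu]{f\circ\pi}{\mathcal{I}\mathbf{Y}}-\condex[\mu]{f}{\mathcal{I}\mathbf{X}}\circ\pi$ to be orthogonal to $\condex[\mu]{f}{\mathcal{I}\mathbf{X}}\circ\pi$ in $\lp^{2}(\nu)$, and Pythagoras combined with the failure of \eqref{eqn:satedness} and the identity $\nbar \condex[\mu]{f}{\mathcal{I}\mathbf{X}}\circ\pi \nbar_{2}=\nbar \condex[\mu]{f}{\mathcal{I}\mathbf{X}} \nbar_{2}$ (which holds because $\pi$ is measure-preserving) yields the claimed increment.

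Next I would iterate. Starting from $(\mathbf{X}_{0},\mu_{0}):=(\mathbf{X},\mu)$ and $f_{0}:=f$, at each stage either $(\mathbf{X}_{n},\mu_{n})$ is already $(\epsilon,f_{n},\mathcal{I})$-sated in $\cat[]$ and I halt, or the lemma supplies an extension $\pi_{n+1}\colon(\mathbf{X}_{n+1},\mu_{n+1})\to(\mathbf{X}_{n},\mu_{n})$ in $\cat[]$ with $f_{n+1}:=f_{n}\circ\pi_{n+1}$ whose conditional-expectation energy exceeds the previous one by at least $\epsilon^{2}$. Since all the $\pi_{i}$ are measure-preserving one has $\nbar f_{n} \nbar_{2}=\nbar f \nbar_{2}$ at every stage, so the energies are bounded above by $\nbar f \nbar_{2}^{2}$ and the procedure terminates after at most $\lceil \nbar f \nbar_{2}^{2}/\epsilon^{2}\rceil$ steps. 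The composition $\psi:=\pi_{1}\circ\cdots\circ\pi_{n}\colon\mathbf{X}_{n}\to\mathbf{X}$ lies in $\cat[]$ as a composition of its morphisms and is the desired $(\epsilon,f\circ\psi,\mathcal{I})$-sated extension.

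The main obstacle is the categorical identification used in the energy increment lemma, namely that $\condex[\mu]{f}{\mathcal{I}\mathbf{X}}\circ\pi$ really is the conditional expectation on $(\mathbf{Y},\nu)$ of $f\circ\pi$ onto a sub-$\sigma$-algebra sitting inside the one induced by $\mathbf{Y}\to\mathcal{I}\mathbf{Y}$. This is where the naturality of $\mathcal{I}$ plays its only essential role; once it is in place, the rest is elementary Hilbert space geometry together with a bounded count of iterations.
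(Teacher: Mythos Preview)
Your proposal is correct and rests on the same ingredients as the paper's proof: functoriality of $\mathcal{I}$ shows that the $\sigma$-algebra pulled back from $\mathcal{I}\mathbf{X}$ sits inside the one induced by $\mathbf{Y}\to\mathcal{I}\mathbf{Y}$, so the two conditional expectations are nested orthogonal projections and Pythagoras converts the failure of \eqref{eqn:satedness} into an energy gain of at least $\epsilon^{2}$.

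The only difference is organizational. Rather than iterating, the paper selects in a single step an extension $\psi:(\mathbf{Y},\nu)\to(\mathbf{X},\mu)$ in $\cat[]$ for which $\nbar\condex[\nu]{f\circ\psi}{\mathcal{I}\mathbf{Y}}\nbar_{2}$ lies within $\delta=\epsilon/(2\nbar f\nbar_{2})$ of the supremum of this quantity over all extensions in $\cat[]$; since any further extension $(\mathbf{Z},\lambda)\to(\mathbf{Y},\nu)$ is in particular an extension of $(\mathbf{X},\mu)$, its energy cannot exceed that of $(\mathbf{Y},\nu)$ by more than $\delta$, and the same Pythagoras argument then bounds the left-hand side of \eqref{eqn:satedness} by $\sqrt{2\nbar f\nbar_{2}\delta}\leq\epsilon$ (the paper's displayed bound $2\nbar f\nbar_{2}\delta$ is a harmless slip in this direction). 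Your iterative version is equally valid, gives an explicit bound $\lceil\nbar f\nbar_{2}^{2}/\epsilon^{2}\rceil$ on the length of the tower, and perhaps makes more transparent why allowing $\epsilon>0$ removes the need for inverse limits; the paper's one-shot argument is marginally slicker in that it avoids composing a chain of morphisms.
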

\begin{proof}
Fix $\epsilon > 0$ and $f \in \lp^2(\mathbf{X},\mu)$.
Assume $f \ne 0$ as otherwise the conclusion is immediate.
We have
\[
\sup \{ \nbar \condex{f\circ\psi}{\mathcal{I}\mathbf{Y}} \nbar_{2} : (\mathbf{Y},\nu) \stackrel{\psi}{\to} (\mathbf{X},\mu) \textrm{ a morphism} \} \leq \nbar f\nbar_{2} < \infty,
\]
so there exists an extension $\psi : (\mathbf{Y},\nu) \to (\mathbf{X},\mu)$ such that $\nbar \condex{f \circ \psi}{\mathcal{I}\mathbf{Y}} \nbar$ is within $\epsilon/(2 \nbar f \nbar_2)$ of the supremum above.
For any further extension $\pi : (\mathbf{Z},\lambda) \to (\mathbf{Y},\nu)$ we have
\[
\condex{f\circ\psi}{\mathcal{I}\mathbf{Y}} \circ\pi
=
\condex{f\circ\psi\circ\pi}{\pi^{-1}(\mathcal{I}\mathbf{Y})}
=
\condex{\condex{f\circ\psi\circ\pi}{\mathcal{I}\mathbf{Z}}}{\pi^{-1}(\mathcal{I}\mathbf{Y})}
\]
by functoriality of $\mathcal{I}$.
On the other hand, by choice of $\mathbf{Y}$ we have
\[
\nbar  \condex{f\circ\psi}{\mathcal{I}\mathbf{Y}} \nbar_{2} > \nbar \condex{f\circ\psi\circ\pi}{\mathcal{I}\mathbf{Z}} \nbar_{2} - \delta.
\]
Since the conditional expectation onto $\pi^{-1}(\mathcal{I}\mathbf{Y})$ is an orthogonal projection, this implies
\[
\nbar \condex{f\circ\psi}{\mathcal{I}\mathbf{Y}} - \condex{f\circ\psi\circ\pi}{\mathcal{I}\mathbf{Z}} \nbar_{2}
\leq 2 \nbar\condex{f\circ\psi\circ\pi}{\mathcal{I}\mathbf{Z}}\nbar_{2} \delta
\leq 2 \nbar f\nbar_{2} \delta,
\]
so $(\mathbf{Y},\nu)$ is $(\epsilon,f\circ\psi,\mathcal{I})$ sated provided $\delta<\epsilon/(2\nbar f\nbar_{2})$.
\end{proof}

Theorem~\ref{thm:epsilon-sated} will be applied to the category $\cat[erg]$ of ergodic systems.
However, we will need satedness in the class of \emph{all} measure-preserving systems.
Switching between these classes requires a version of the ergodic decomposition.
Since we do not assume metrizability of the compact spaces under consideration, we use the following Choquet-type theorem.
Recall that a function $f$ from a convex set $M$ to $\mathbb{R}$ is \define{affine} if $f(tx + (1-t)y) = tf(x) + (1-t)f(y)$ for all $0 \le t \le 1$ and all $x,y \in M$.

\begin{theorem}[{Choquet--Bishop--de Leeuw, \cite[p.\ 17]{MR1835574}}]
\label{thm:choquet-bishop-de_leeuw}
Suppose that $M$ is a compact convex subset of a locally convex space and let $\mu\in M$.
Then there exists a probability measure $\eta$ on $M$ that represents $\mu$ in the sense that
\[
\phi(\mu) = \int \phi \intd \eta
\]
for every continuous affine function $\phi : M \to \mathbb{R}$ and such that $\eta$ vanishes on every Baire subset of $M$ that is disjoint from $\extr M$.
\end{theorem}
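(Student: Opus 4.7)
The plan is to produce $\eta$ as a maximal element with respect to the Choquet ordering on the set of probability measures representing $\mu$, and then to invoke the Bishop--de Leeuw refinement to get the required vanishing on Baire sets that miss $\extr M$.

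First I would introduce
\[
M_{\mu} = \Big\{ \eta \in M(M) : \eta \text{ a Baire probability measure with } \phi(\mu) = \int \phi \intd\eta \text{ for all continuous affine } \phi \Big\},
\]
which is nonempty (it contains $\delta_{\mu}$), convex, and weak-$^{*}$ compact. On $M_{\mu}$ put the \emph{Choquet order}: $\eta \prec \eta'$ iff $\int f \intd\eta \le \int f \intd\eta'$ for every continuous convex $f : M \to \R$. Standard weak-$^{*}$ compactness together with upper semicontinuity of the functionals $\eta \mapsto \int f \intd\eta$ (for $f$ continuous convex) verifies that every chain has an upper bound, so Zorn's lemma produces a $\prec$-maximal $\eta \in M_{\mu}$.

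Next I would characterize maximality analytically. For $f \in \cont(M)$ define the upper envelope $\bar f(x) = \inf\{ g(x) : g : M \to \R \text{ continuous, concave, } g \ge f \}$; then $f \le \bar f$ and $\bar f$ is concave and upper semicontinuous. A Hahn--Banach argument (applied to the sublinear functional $\eta \mapsto \int \bar f \intd\eta$ on $M_{\mu}$) shows that $\eta$ is maximal in $M_{\mu}$ iff $\int f \intd\eta = \int \bar f \intd\eta$ for every continuous $f$. This step is where the hypothesis that $M$ is convex in a locally convex space is used, via separation theorems.

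The final and most delicate step is the Bishop--de Leeuw passage from maximal measures to measures concentrated on $\extr M$ in the Baire sense. The key input is the pointwise identity
\[
\extr M = \big\{ x \in M : f(x) = \bar f(x) \text{ for every } f \in \cont(M) \big\},
\]
which follows by testing with strictly convex combinations. When $M$ is metrizable one directly finds a single strictly convex continuous $f$ with $\{\bar f > f\} = M \setminus \extr M$, giving a $G_{\delta}$ description; in the general case this Borel description fails, and the main obstacle is to argue only at the level of Baire sets. I would exploit the fact that a Baire set $B \subset M \setminus \extr M$ depends on at most countably many continuous functions $f_{n}$, so $B$ is contained in $\bigcup_{n} \{\bar f_{n} > f_{n}\}$; maximality of $\eta$ gives $\int (\bar f_{n} - f_{n}) \intd\eta = 0$, and since $\bar f_{n} - f_{n}$ is nonnegative upper semicontinuous, this forces $\eta(\{\bar f_{n} > f_{n}\}) = 0$ for every $n$, hence $\eta(B) = 0$. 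This is exactly the vanishing on Baire sets disjoint from $\extr M$ claimed in the theorem.
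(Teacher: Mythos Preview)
The paper does not prove this theorem; it is quoted from Phelps's \emph{Lectures on Choquet's Theorem} and used as a black box. Your sketch follows the standard route from that reference (maximal representing measures via Zorn, Mokobodzki's characterization through upper envelopes, then the Bishop--de Leeuw step), and the first two stages are fine.

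The last step, however, contains a genuine gap. From ``$B$ lies in the $\sigma$-algebra generated by countably many continuous functions $f_{n}$'' one cannot conclude $B\subset\bigcup_{n}\{\bar f_{n}>f_{n}\}$. Concretely, take $M=[0,1]^{2}$, $f_{1}(x,y)=x$, and $B=\{1/2\}\times[0,1]$; then $B\in\sigma(f_{1})$ and $B\cap\extr M=\emptyset$, yet $f_{1}$ is affine, so $\bar f_{1}=f_{1}$ and $\{\bar f_{1}>f_{1}\}=\emptyset$. The generating functions of $B$ need not themselves detect non-extremality. The standard remedy is to first arrange that the generators are continuous \emph{affine} functions $a_{n}$ (possible since continuous affine functions separate points of $M$ by Hahn--Banach) and then work with the single continuous convex function $F=\sum_{n}2^{-n}a_{n}^{2}/(1+\|a_{n}\|_{\infty}^{2})$, whose strict convexity along any segment distinguished by some $a_{n}$ is what links $\{\bar F=F\}$ to extremality; equivalently, one pushes $\eta$ forward along the affine map $x\mapsto(a_{n}(x))_{n}$ to a metrizable compact convex quotient and invokes the metrizable Choquet theorem there. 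Either way, additional work beyond your final paragraph is needed to reach $\eta(B)=0$.
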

In this version of the Choquet theorem the representing measure $\eta$ is not unique and is only supported by the extreme points in a weak sense, but this will not be an issue.

\begin{lemma}
\label{lem:top-extension}
Let $(\mathbf{X},\mu)$ be a measure-preserving system and $F\subset \lp^{\infty}(X)$.
Then there exists an extension $\pi:(\mathbf{Y},\nu)\to (\mathbf{X},\mu)$ such that $f\circ\pi$ coincides with a continuous function on $Y$ $\nu$-a.e.\ for every $f\in F$.
\end{lemma}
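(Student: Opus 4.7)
The plan is to realize $Y$ as the Gelfand spectrum of a suitably chosen $G$-invariant commutative \cstar{} subalgebra of $\lp^{\infty}(X,\mu)$. Let $\mathcal{B}$ denote the smallest unital, $G$-invariant \cstar{} subalgebra of $\lp^{\infty}(X,\mu)$ containing the image of $\cont(X)$ together with all functions in $F$. This is well defined because $T$ acts on $\lp^{\infty}(X,\mu)$ by $\ast$-automorphisms. Put $Y=\mathrm{Spec}(\mathcal{B})$, a compact Hausdorff space, and write $f\mapsto\hat{f}$ for the Gelfand isomorphism $\mathcal{B}\to\cont(Y)$. The $G$-action on $\mathcal{B}$ by $\ast$-automorphisms dualizes to an action on $Y$ by homeomorphisms, and the inclusion $\cont(X)\hookrightarrow\mathcal{B}$ yields a $G$-equivariant continuous surjection $\pi:Y\to X$ characterized by $\hat{g}=g\circ\pi$ for all $g\in\cont(X)$.

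Next I would equip $Y$ with a measure by letting $\nu$ be the Baire probability measure on $Y$ corresponding via Riesz--Markov--Kakutani to the positive linear functional $\hat{f}\mapsto\int f\intd\mu$ on $\cont(Y)=\mathcal{B}$. The $G$-invariance of both $\mathcal{B}$ and $\mu$ ensures that $\nu$ is invariant, so $(\mathbf{Y},\nu)$ is a measure-preserving system. Testing against $g\in\cont(X)$ gives $\int g\circ\pi\intd\nu=\int\hat{g}\intd\nu=\int g\intd\mu$, which shows $\pi_{*}\nu=\mu$, so that $\pi:(\mathbf{Y},\nu)\to(\mathbf{X},\mu)$ is a morphism in $\cat[meas]$.

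The crux is to verify $\hat{f}=f\circ\pi$ in $\lp^{\infty}(Y,\nu)$ for every $f\in F\subset\mathcal{B}$. Expanding $\nbar\hat{f}-f\circ\pi\nbar_{\lp^{2}(\nu)}^{2}$, the two square terms both collapse to $\int|f|^{2}\intd\mu$ via $\widehat{|f|^{2}}=|\hat{f}|^{2}$ and $\pi_{*}\nu=\mu$. For the cross term one approximates: choose $g_{n}\in\cont(X)$ with $g_{n}\to f$ in $\lp^{2}(X,\mu)$, possible by density of $\cont(X)$ in $\lp^{2}$ against a Baire probability measure on a compact Hausdorff space (this density, provable via regularity of Baire measures together with Urysohn's lemma, is the one point requiring care in the non-metrizable setting and uses no separability). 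Since $\hat{g}_{n}=g_{n}\circ\pi$, one has
\[
\int\hat{f}\cdot\overline{g_{n}\circ\pi}\intd\nu=\int\widehat{f\overline{g_{n}}}\intd\nu=\int f\overline{g_{n}}\intd\mu\to\int|f|^{2}\intd\mu,
\]
while $g_{n}\circ\pi\to f\circ\pi$ in $\lp^{2}(Y,\nu)$ via the isometry induced by $\pi_{*}\nu=\mu$, so the cross term also equals $\int|f|^{2}\intd\mu$. This forces $\nbar\hat{f}-f\circ\pi\nbar_{\lp^{2}(\nu)}=0$, as required.
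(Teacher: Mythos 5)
Your proposal is correct and follows essentially the same route as the paper: take the Gelfand spectrum of the minimal $G$-invariant \cstar{}-subalgebra of $\lp^{\infty}(X,\mu)$ generated by $\cont(X)\cup F$, push the functional $f\mapsto\int f\intd\mu$ to a measure $\nu$ on the spectrum, and identify $\hat f$ with $f\circ\pi$ by testing against the dense image of $\cont(X)$. The only cosmetic difference is that you run the identification as an $\lp^{2}$-norm computation while the paper phrases it as an $\lp^{1}$-duality argument; both hinge on the same density of $\cont(X)$ and the identity $\hat g=g\circ\pi$ for $g\in\cont(X)$.
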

\begin{proof}
Let $A$ be the minimal $G$-invariant \cstar-subalgebra of $\lp^{\infty}(X,\mu)$ that contains $\cont(X) \cup F$.
Let $Y$ be its Gelfand spectrum with the canonical $G$-action and the canonical projection $\pi$ onto $X$.
We have a positive linear functional $\nu$ on $A$ given by $\nu(g)=\int g \intd\mu$, this defines a $G$-invariant probability measure on $Y$.

It remains to show that $f\circ\pi$ coincides with a continuous function $\nu$-a.e.\ for every $f\in F$.
Fix $f\in F$, by duality it suffices to verify
\begin{equation}
\label{eq:two-lin-forms}
\int (f\circ\pi) g \intd\nu = \int \tilde f g \intd\nu
\end{equation}
for every $g\in L^{1}(Y,\nu)$, where $\tilde f$ is the continuous function on $Y$ corresponding to $f$ viewed as an element of $A$.
Both $f\circ\pi$ and $\tilde f$ are bounded functions, so it suffices to verify this identity for $g$ in a dense subspace of $L^{1}(Y,\nu)$.
We claim that $\pi^{*}\cont(X)$ is one such subspace.
Indeed, $\cont(Y)$ is dense in $L^{1}(Y,\nu)$ and $\cont(X)$ is $L^{1}$-dense in $A$.
%Since absolute value is a \cstar-algebraic operation, t
This implies that $\pi^{*}\cont(X)$ is $L^{1}$ dense in $\cont(Y)$.
For every $g\in \cont(X)$ we have $\tilde g=g\circ\pi$, so \eqref{eq:two-lin-forms} boils down to
\[
\int ((fg)\circ\pi) \intd\nu = \int fg \intd\mu
\]
as desired.
\end{proof}

\begin{proposition}
\label{prop:erg-sated}
Let $(\mathbf{X},\mu)$ be $(\epsilon,f,\mathcal{I})$-sated in $\cat[erg]$ for some $f\in\lp^{\infty}(X,\mu)$ and $\epsilon\geq 0$.
Then $(\mathbf{X},\mu)$ is also $(\epsilon,f,\mathcal{I})$-sated in $\cat[meas]$.
\end{proposition}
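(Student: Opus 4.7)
The plan is to decompose an arbitrary lift $\nu$ of $\mu$ into ergodic lifts via Choquet's theorem and to integrate the pointwise satedness bound that the hypothesis provides on each ergodic piece. The hypothesis implicitly forces $\mu$ to be ergodic, since $(\mathbf{X},\mu)$ must itself lie in $\cat[erg]$ for satedness in $\cat[erg]$ to be meaningful.

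Given an extension $\pi:(\mathbf{Y},\nu)\to(\mathbf{X},\mu)$ in $\cat[meas]$, I would first use the equivalent form \eqref{eqn:satedEquiv} and reduce the claim to bounding
\[
\left|\int(f\circ\pi-g\circ\pi)\phi\intd\nu\right|\leq\epsilon\,\nbar\phi\nbar_{2}
\]
for every $\phi\in\cont(\mathcal{I}\mathbf{Y})$, where $g=\condex[\mu]{f}{\mathcal{I}\mathbf{X}}$ is bounded because $f$ is. I would then consider the compact convex set $\tilde{M}=\{\lambda\in M_{\mathbf{Y}}:\pi_{*}\lambda=\mu\}$. Lemma~\ref{lem:extremeLiftsErgodic} identifies $\extr\tilde{M}$ with ergodic lifts of $\mu$, so for each $\lambda\in\extr\tilde{M}$ the morphism $\pi:(\mathbf{Y},\lambda)\to(\mathbf{X},\mu)$ lies in $\cat[erg]$ and the hypothesis yields the pointwise bound $|\int(f\circ\pi-g\circ\pi)\phi\intd\lambda|\leq\epsilon\,\nbar\phi\nbar_{2,\lambda}$.

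To stitch these bounds together, I would apply Theorem~\ref{thm:choquet-bishop-de_leeuw} to produce a probability measure $\eta$ on $\tilde{M}$ that represents $\nu$ on continuous affine functionals and vanishes on every Baire subset of $\tilde{M}$ disjoint from $\extr\tilde{M}$. The decisive step is to extend the Choquet identity $\int\Phi\intd\eta=\Phi(\nu)$ from continuous affine $\Phi$ to all functionals of the form $\Phi(\lambda)=\int h\intd\lambda$ with $h$ bounded and Baire measurable on $Y$: a monotone class argument, starting from the case $h\in\cont(Y)$ where $\Phi$ is continuous affine, shows that $\Phi$ is Baire measurable on $\tilde{M}$ and that $\int\int h\intd\lambda\intd\eta(\lambda)=\int h\intd\nu$ for every such $h$. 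Applying this identity to $h=(f\circ\pi-g\circ\pi)\phi$ and to $h=\phi^{2}$, then using the triangle inequality and Cauchy--Schwarz, reduces the goal to the $\eta$-almost everywhere validity of the pointwise bound; this holds because the Baire set on which the bound fails is disjoint from $\extr\tilde{M}$ and therefore $\eta$-null.

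The principal technical hurdle is this monotone class extension of the Choquet identity, which is needed because the integrand $(f\circ\pi-g\circ\pi)\phi$ is bounded and Baire measurable but not continuous on $Y$ when $f$ is only in $\lp^{\infty}$. Handling it in the present non-metrizable setting, where the Baire $\sigma$-algebra is strictly smaller than the Borel one and off-the-shelf ergodic decomposition is not available, is where the care lies.
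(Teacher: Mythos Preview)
Your approach is correct and close in spirit to the paper's, but the key technical step is handled differently. The paper avoids your monotone-class extension of the Choquet identity by first passing to a further extension $(\mathbf{Z},\lambda)\to(\mathbf{Y},\nu)$ via Lemma~\ref{lem:top-extension}, on which both $f\circ\pi$ and $\condex[\mu]{f}{\mathcal{I}\mathbf{X}}\circ\pi$ acquire \emph{continuous} representatives. With all three functions $f$, $\tilde f$, $\phi$ continuous, the functionals $\lambda\mapsto\int f\phi\intd\lambda$, $\lambda\mapsto\int\tilde f\phi\intd\lambda$, and $\lambda\mapsto\int\phi^{2}\intd\lambda$ are continuous affine on $\tilde M_{\mathbf{Y}}$, so the Choquet--Bishop--de~Leeuw representation applies to them directly and the exceptional set $\Lambda$ is visibly Baire (even open). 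Your route instead keeps $f$ and $g$ merely bounded Baire-measurable and upgrades the barycentric identity $\int_{\tilde M}\bigl(\int h\intd\lambda\bigr)\intd\eta(\lambda)=\int h\intd\nu$ from $h\in\cont(Y)$ to all bounded Baire $h$ by a functional monotone-class argument; this simultaneously shows $\lambda\mapsto\int h\intd\lambda$ is Baire-measurable on $\tilde M$, so the failure set is Baire and $\eta$-null. Both arguments are valid: the paper's is shorter once Lemma~\ref{lem:top-extension} is in hand and keeps everything inside the weak$^{*}$-continuous world where Choquet lives; yours is more self-contained, dispensing with the auxiliary Gelfand extension at the cost of a (routine) monotone-class step.
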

\begin{proof}
Let $\pi : (\mathbf{Y},\nu) \to (\mathbf{X},\mu)$ be an extension in $\cat[meas]$.
We have to show \eqref{eqn:satedEquiv} for every $\phi\in\cont(\mathcal{I}\mathbf{Y})$.
Passing to a further extension of $(\mathbf{Y},\nu)$ using Lemma~\ref{lem:top-extension}, we may assume that both $f$ and $\tilde f = \condex[\mu]{f}{\mathcal{I}X} \circ\pi$ (which are a priori merely bounded measurable functions) admit representatives in $\cont(Y)$.
While verifying
\[
\left| \int f \phi \intd \nu - \int \tilde f \phi \intd \nu \right| \leq \epsilon \nbar \phi\nbar_{\lp^{2}(\nu)}
\]
for every $\phi\in\cont(\mathcal{I}\mathbf{Y})$ there is no harm in replacing $f$ and $\tilde f$ by their continuous representatives.

Write $\tilde{M}_\mathbf{Y}$ for the measures in $M_\mathbf{Y}$ that extend $\mu$.
By Lemma~\ref{lem:extremeLiftsErgodic} it is a closed, convex subset of $M_\mathbf{Y}$ whose extreme points are ergodic.
Thus for any measure $\lambda \in \extr \tilde{M}_{\mathbf{Y}}$ we have
\begin{equation}
\label{eq:satedness-extreme}
\left| \int f \phi \intd \lambda - \int \tilde{f} \phi \intd \lambda \right| \leq \epsilon \nbar \phi\nbar_{\lp^{2}(\lambda)}.
\end{equation}
by the satedness hypothesis.
Let now $\eta$ be a measure on $\tilde{M}_{\mathbf{Y}}$ representing $\nu$ in the sense of the Choquet--Bishop--de Leeuw theorem (Theorem \ref{thm:choquet-bishop-de_leeuw}).
Consider the set
\[
\Lambda = \left\{ \lambda\in \tilde{M}_{\mathbf{Y}} : \left| \int f\phi \intd \lambda - \int \tilde{f} \phi \intd \lambda\right| > \epsilon\nbar\phi\nbar_{\lp^{2}(\lambda)}\right\}
\]
which is disjoint from $\extr \tilde{M}_{\mathbf{Y}}$ in view of \eqref{eq:satedness-extreme}, and Baire because it consists of those measures $\lambda$ where one continuous function of $\lambda$ is larger than another.
%\[
%\Lambda = \bigcup_{\epsilon'\in\Q, \epsilon'>\epsilon} \bigcap_{\delta\in\Q, \delta>0} \{ \lambda\in \tilde M_{Y} : \big| \int fg \intd \lambda - \int \tilde f g \intd \lambda\big| > (\epsilon'-\delta) (\int |g|^{2} \intd\lambda )^{1/2} \}.
%\]
It follows that $\eta(\Lambda) = 0$.
Therefore
\begin{align*}
\left| \int f \phi \intd \nu - \int \tilde{f} \phi \intd \nu \right|
&
\leq
\int\limits_{\tilde M_{\mathbf{Y}}} \left| \int f \phi \intd \lambda - \int \tilde{f} \phi \intd \lambda \right| \intd\eta(\lambda)
\\
&
=
\int\limits_{\tilde M_{\mathbf{Y}}\setminus\Lambda} \left| \int f \phi \intd \lambda - \int \tilde{f} \phi \intd \lambda \right| \intd\eta(\lambda)
\leq
\int\limits_{\tilde M_{\mathbf{Y}}\setminus\Lambda} \epsilon\nbar\phi\nbar_{L^{2}(\lambda)} \intd\eta(\lambda)
\leq
\epsilon \nbar\phi\nbar_{\lp^{2}(\nu)}
\end{align*}
by H\"older's inequality.
\end{proof}

\section{Minimal idempotent ultrafilters}
\label{sec:minimalIdempotents}

For any non-empty set $X$ write $\beta X$ for the collection of ultrafilters on $X$.
Recall that these are the filters on $X$ that are maximal with respect to containment, and can be thought of as finitely-additive $\{0,1\}$-valued measures on $X$.
We identify each $x \in X$ with the principal ultrafilter $\princip_x = \{ A \subset X : x \in A \}$.
Upon equipping $\beta X$ with the topology defined by the base consisting of the clopen sets $\overline{A} = \{ \ultra{p} \in \beta X : A \in \ultra{p} \}$ for any subset $A$ of $X$ it becomes a compact, Hausdorff topological space.
It enjoys the following universal property, which will be used repeatedly as a means to take limits along ultrafilters.

\begin{proposition}
\label{prop:ultrafilterExtension}
Let $X$ be a non-empty set.
For any compact, Hausdorff topological space $Z$ and any map $\phi : X \to Z$ there is a continuous map $\beta X \to Z$ that agrees with $\phi$ on the principal ultrafilters.
\end{proposition}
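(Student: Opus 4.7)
The plan is to define the extension $\tilde\phi \colon \beta X \to Z$ via ultrafilter limits. For each $\ultra{p} \in \beta X$, I push $\ultra{p}$ forward along $\phi$ to obtain a filter $\phi_*\ultra{p}$ on $Z$ generated by $\{ B \subset Z : \phi^{-1}(B) \in \ultra{p}\}$; maximality of $\ultra{p}$ makes this an ultrafilter on $Z$. The collection $\{\overline{\phi(A)} : A \in \ultra{p}\}$ has the finite intersection property, so by compactness of $Z$ its total intersection is non-empty, and I will show using Hausdorffness that it must be a singleton $\{z\}$. I set $\tilde\phi(\ultra{p}) := z$. Equivalently, $z$ is the unique point of $Z$ with the property that $\phi^{-1}(V) \in \ultra{p}$ for every open neighborhood $V$ of $z$.

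The uniqueness step is the one that uses Hausdorffness: if $z \ne z'$ were both cluster points, choose disjoint open neighborhoods $V \ni z$ and $V' \ni z'$; then $\phi^{-1}(V)$ and $\phi^{-1}(V')$ would both lie in $\ultra{p}$, contradicting disjointness. So the definition of $\tilde\phi$ is unambiguous.

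Next I will verify continuity. Fix $\ultra{p}$ and an open $U \subset Z$ containing $z = \tilde\phi(\ultra{p})$. Compact Hausdorff spaces are regular, so I can choose an open $V$ with $z \in V \subset \overline{V} \subset U$. Then $A := \phi^{-1}(V) \in \ultra{p}$, so $\overline{A} = \{ \ultra{q} \in \beta X : A \in \ultra{q}\}$ is a basic clopen neighborhood of $\ultra{p}$ in $\beta X$. For any $\ultra{q} \in \overline{A}$, if $\tilde\phi(\ultra{q}) \notin U$ then $Z \setminus \overline{V}$ is an open neighborhood of $\tilde\phi(\ultra{q})$, whence $\phi^{-1}(Z \setminus \overline{V}) \in \ultra{q}$; but this set is disjoint from $A$, contradicting $A \in \ultra{q}$. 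Hence $\tilde\phi(\overline{A}) \subset U$, establishing continuity at $\ultra{p}$.

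Finally, agreement on principal ultrafilters is immediate: for $\ultra{p} = \princip_x$, a set lies in $\princip_x$ iff it contains $x$, so the only point $z$ with $\phi^{-1}(V) \in \princip_x$ for every open $V \ni z$ is $z = \phi(x)$. There is no real obstacle here; this is a standard consequence of compactness plus Hausdorffness, and the one place some care is needed is translating between "ultrafilter convergence" and the topological description of points, which the regularity argument handles.
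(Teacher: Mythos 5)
Your proof is correct and complete: the construction of $\tilde\phi(\ultra{p})$ as the unique cluster point of the pushforward ultrafilter, the Hausdorff argument for uniqueness, the regularity argument for continuity on the basic clopen sets $\overline{A}$, and the check on principal ultrafilters are all sound. The paper states this proposition without proof, as the standard universal property of the Stone--\v{C}ech compactification, and what you have written is precisely the canonical argument.
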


Given a map $\phi$ from $X$ to a compact, Hausdorff space, we denote by
\begin{equation*}
\lim_{x \to \ultra{p}} \phi(x)
\end{equation*}
the value at $\ultra{p} \in \beta X$ of the extension provided by Proposition~\ref{prop:ultrafilterExtension}.

Let $G$ be any group.
One can make $\beta G$ a semigroup by defining
\begin{equation}
\label{eqn:ultrafilterConvolution}
\ultra{p} \conv \ultra{q} = \{ A \subset G : \{ g \in G :A g^{-1} \in \ultra{p} \} \in \ultra{q} \}
\end{equation}
for any $\ultra{p},\ultra{q} \in \beta G$.
Note that $\delta_g \conv \delta_h = \delta_{gh}$ for any $g,h \in G$ so \eqref{eqn:ultrafilterConvolution} extends multiplication on $G$.
The operation above makes $\beta G$ a right semi-topological semigroup: for any fixed $\ultra{p}$ in $\beta G$ the map $\ultra{q} \mapsto \ultra{p} \conv \ultra{q}$ is continuous.
Ellis's lemma \cite[Lemma~1]{MR0101283} implies that there are idempotents for \eqref{eqn:ultrafilterConvolution} in any compact sub-semigroup of $\beta G$.
The semigroup operation on $\beta G$ interacts with continuous actions of $G$ on compact, Hausdorff spaces in the following way (cf. \cite[Lemma~6.1]{MR982232}).

\begin{proposition}
\label{prop:ultrafilterAction}
Let $G$ be a group and let $T : G \times X \to X$ be a right actions of $G$ on a compact, Hausdorff space $X$ via continuous maps.
Then
\begin{equation*}
\lim_{g \to \ultra{p} \conv \ultra{q}} T^g x = \lim_{g \to \ultra{q}} \lim_{h \to \ultra{p}} T^g (T^h x)
\end{equation*}
for all $x \in X$ and all $\ultra{p},\ultra{q} \in \beta G$.
In particular, if $\ultra{p}$ is idempotent then
\[
\lim_{g \to \ultra{p}} T^g x = \lim_{g \to \ultra{p}} \lim_{h \to \ultra{p}} T^g(T^h x)
\]
for all $x \in X$.
\end{proposition}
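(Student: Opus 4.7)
The plan is to extend the orbit map $F(g)=T^{g}x$ continuously to $\beta G$, collapse the iterated limit on the right via continuity of each individual $T^{g}$, and then unwind the definition \eqref{eqn:ultrafilterConvolution} of the convolution on a suitable basis of open sets. First I would apply Proposition~\ref{prop:ultrafilterExtension} to $F$ to obtain a continuous extension $\tilde F\colon\beta G\to X$ with $\tilde F(\ultra{r})=\lim_{g\to\ultra{r}}T^{g}x$ for every $\ultra{r}\in\beta G$. Since each $T^{g}\colon X\to X$ is continuous, composing with $T^{g}$ commutes with the continuous extension, so
\[
\lim_{h\to\ultra{p}}T^{g}(T^{h}x)=T^{g}\bigl(\lim_{h\to\ultra{p}}T^{h}x\bigr)=T^{g}y_{0}, \qquad y_{0}:=\tilde F(\ultra{p}).
\]
Applying Proposition~\ref{prop:ultrafilterExtension} once more to the continuous orbit map $\phi\colon g\mapsto T^{g}y_{0}$ shows that the right-hand side of the asserted identity equals $\tilde\phi(\ultra{q})$, which I denote by $z_{0}$.

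The heart of the argument is then to show $\tilde F(\ultra{p}\conv\ultra{q})=z_{0}$. Since $X$ is Hausdorff, it suffices to verify that $F^{-1}(U)\in\ultra{p}\conv\ultra{q}$ for every open neighborhood $U$ of $z_{0}$. Given such a $U$, continuity of $\tilde\phi$ yields $\phi^{-1}(U)\in\ultra{q}$. For each $g\in\phi^{-1}(U)$ the point $y_{0}$ lies in the open set $(T^{g})^{-1}(U)$, so $F^{-1}((T^{g})^{-1}(U))\in\ultra{p}$ by the standard fact that $\tilde F(\ultra{p})\in V$ open implies $F^{-1}(V)\in\ultra{p}$. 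Because $T$ is a right action one has $T^{g}T^{h}=T^{hg}$, and therefore
\[
F^{-1}((T^{g})^{-1}(U))=\{h\in G:T^{hg}x\in U\}=\{h\in G:hg\in F^{-1}(U)\}=F^{-1}(U)g^{-1}.
\]
Hence $\{g\in G:F^{-1}(U)g^{-1}\in\ultra{p}\}\supseteq\phi^{-1}(U)\in\ultra{q}$, which by \eqref{eqn:ultrafilterConvolution} is precisely the statement $F^{-1}(U)\in\ultra{p}\conv\ultra{q}$. The second assertion of the proposition is the specialization $\ultra{q}=\ultra{p}$, using $\ultra{p}\conv\ultra{p}=\ultra{p}$.

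The only delicate point I anticipate is the bookkeeping: the convolution \eqref{eqn:ultrafilterConvolution} is written asymmetrically in terms of right translates $Ag^{-1}$, while the right-action identity $T^{g}T^{h}=T^{hg}$ also reverses the order of factors. Verifying that these two reversals conspire to yield the identity $F^{-1}((T^{g})^{-1}(U))=F^{-1}(U)g^{-1}$ is the one substantive check; everything else is a routine combination of the universal property of $\beta G$, continuity of each $T^{g}$, and the Hausdorff property of $X$.
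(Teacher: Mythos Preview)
Your argument is correct. The paper itself does not supply a proof of this proposition; it is stated with a reference (``cf.\ \cite[Lemma~6.1]{MR982232}'') and used as a black box thereafter. So there is no paper proof to compare against, only the question of whether your reasoning stands on its own.

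It does. The reduction of the iterated limit to $\tilde\phi(\ultra{q})$ via continuity of each $T^{g}$ is clean, and the key computation $F^{-1}((T^{g})^{-1}U)=F^{-1}(U)g^{-1}$ correctly matches the right-action identity $T^{g}T^{h}=T^{hg}$ to the right-translate $Ag^{-1}$ appearing in \eqref{eqn:ultrafilterConvolution}. The one implicit step you might make explicit is the converse direction of your ``standard fact'': you use that $\tilde F(\ultra{r})\in V$ implies $F^{-1}(V)\in\ultra{r}$, but to conclude $\tilde F(\ultra{p}\conv\ultra{q})=z_{0}$ from $F^{-1}(U)\in\ultra{p}\conv\ultra{q}$ for all open $U\ni z_{0}$ you need the Hausdorff separation argument (if $\tilde F(\ultra{p}\conv\ultra{q})=w\neq z_{0}$, separate $w$ and $z_{0}$ by disjoint opens and obtain two disjoint members of the ultrafilter). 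You signal this with ``Since $X$ is Hausdorff, it suffices\dots'', which is enough, but a reader might appreciate the half-line justification.
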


The following version of the van der Corput trick follows immediately from the proof of \cite[Lemma~4]{arxiv:0711.0484}.

\begin{proposition}
\label{prop:vdc}
Let $G$ be a group and let $\mathscr{H}$ be a Hilbert space.
For any sequence $u : G \to \mathscr{H}$ that is norm-bounded and any idempotent ultrafilter $\ultra{p}$ on $G$, if
\begin{equation*}
\left| \lim_{h \to \ultra{p}} \lim_{g \to \ultra{p}} \langle u(hg),u(g) \rangle \right| < \epsilon
\end{equation*}
then $\nbar \lim\limits_{g \to \ultra{p}} u(g) \nbar^2 \le \epsilon$, the latter limit being taken in the weak topology on $\mathscr{H}$.
\end{proposition}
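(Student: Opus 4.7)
The plan is to reduce everything to a single weak limit and then leverage idempotence. Because $u$ is norm-bounded, its image lies in a weakly compact subset of $\mathscr{H}$; applying Proposition~\ref{prop:ultrafilterExtension} in the weak topology, I will pass to the weak limit
\[
v = \lim_{g \to \ultra{p}} u(g)
\]
so that the statement reduces to showing $\nbar v \nbar^{2} \le \epsilon$.

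The starting identity will be $\nbar v \nbar^{2} = \langle v, v \rangle = \lim_{g \to \ultra{p}} \langle u(g), v\rangle$, which follows from weak convergence in the first slot. I will then apply Proposition~\ref{prop:ultrafilterAction} to the bounded scalar function $g \mapsto \langle u(g), v\rangle$: using the idempotence $\ultra{p} \conv \ultra{p} = \ultra{p}$, this rewrites the limit as
\[
\nbar v \nbar^{2} = \lim_{g \to \ultra{p}} \lim_{h \to \ultra{p}} \langle u(hg), v\rangle .
\]

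Next I will expand $v$ in the right slot by weak convergence once more, $\langle u(hg), v\rangle = \lim_{k \to \ultra{p}} \langle u(hg), u(k)\rangle$, and invoke the idempotent property a second time to collapse the nested $(h,k)$-limit in such a way that $k$ becomes effectively coupled to $g$, yielding
\[
\nbar v \nbar^{2} = \lim_{h \to \ultra{p}} \lim_{g \to \ultra{p}} \langle u(hg), u(g)\rangle.
\]
Taking absolute values and inserting the hypothesis closes the estimate.

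The hard part will be justifying the manipulations of the three nested ultrafilter limits: since the inner product $\langle u(\cdot), u(\cdot)\rangle$ is only separately (not jointly) weakly continuous, naive Fubini-style swaps are invalid, and every reordering has to be routed through Proposition~\ref{prop:ultrafilterAction} together with $\ultra{p} \conv \ultra{p} = \ultra{p}$. The precise juggling of these limits, which is the only non-trivial ingredient, is carried out in the proof of \cite[Lemma~4]{arxiv:0711.0484}, from which the statement is inherited.
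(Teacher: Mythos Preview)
Your proposal is correct and matches the paper's own treatment: the paper does not give a proof of Proposition~\ref{prop:vdc} at all but simply records that it ``follows immediately from the proof of \cite[Lemma~4]{arxiv:0711.0484}'', which is exactly the reference you invoke for the delicate limit interchange. Your steps up to $\nbar v\nbar^{2}=\lim_{g\to\ultra{p}}\lim_{h\to\ultra{p}}\langle u(hg),v\rangle$ are clean, and you rightly flag the remaining ``collapse'' of the triple limit as the only nontrivial point and defer it to that same lemma---so your write-up is, if anything, more detailed than what the paper provides.
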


An idempotent ultrafilter $\ultra{p}$ is \define{minimal} if it belongs to a minimal right ideal in $\beta G$.
Every non-zero right ideal contains a minimal right ideal, so Ellis's lemma \cite[Lemma~1]{MR0101283} implies that every right ideal contains a minimal idempotent ultrafilter.
See e.g.\ \cite[Section~2]{MR2354320} for the details.
The following lemma tells us that sets in minimal idempotent ultrafilters have a certain largeness property.
Recall that $S \subset G$ is \define{right syndetic} if there is a finite subset $F$ of $G$ for which $S F = G$.

\begin{lemma}[cf.~{\cite[Theorem~2.4]{MR2052273}}]
\label{lem:centralSyndeticity}
Let $G$ be a group and let $\ultra{p}$ be a minimal idempotent ultrafilter on $G$.
For any $A \in \ultra{p}$ the set $A^{-1}A = \{ g^{-1}h : g,h \in A \}$ is right syndetic.
\end{lemma}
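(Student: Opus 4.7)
The plan is to realize the minimal right ideal containing $\ultra{p}$ as a minimal topological $G$-system and then apply the standard fact that return times to open sets under a minimal group action are syndetic.

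The first step is to observe that for each $g\in G$ the map $\phi_{g}\colon\beta G\to\beta G$ given by $\phi_{g}(\ultra{r})=\ultra{r}\conv\delta_{g}$ is continuous: a direct computation yields $\phi_{g}^{-1}(\overline{B})=\overline{Bg^{-1}}$, which is clopen. This gives a continuous right $G$-action on $\beta G$, and I would check that a closed subset of $\beta G$ is invariant under this action exactly when it is a closed right ideal (the non-trivial direction uses continuity of $\ultra{s}\mapsto\ultra{r}\conv\ultra{s}$ from Section~\ref{sec:minimalIdempotents} to extend $G$-invariance to $\beta G$-invariance on the right). Letting $R$ be a minimal right ideal containing $\ultra{p}$, the identity $\ultra{p}=\ultra{p}\conv\ultra{p}$ together with minimality gives $R=\ultra{p}\conv\beta G$, which is closed as the continuous image of a compact set, so $R$ is a minimal closed $G$-invariant subset for this right action.

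For the given $A\in\ultra{p}$, the open set $U=\overline{A}\cap R$ contains $\ultra{p}$. By minimality of $R$ every $G$-orbit is dense in it, so $R=\bigcup_{g\in G}\phi_{g}^{-1}(U)$, and compactness of $R$ yields a finite $F\subseteq G$ with $R=\bigcup_{g\in F}\phi_{g}^{-1}(U)$. Because $R$ is a right ideal, $\ultra{p}\conv\delta_{h}\in R$ for every $h\in G$, so there exists $g\in F$ with $\ultra{p}\conv\delta_{hg}=(\ultra{p}\conv\delta_{h})\conv\delta_{g}\in U\subseteq\overline{A}$, which unwinds to $A(hg)^{-1}\in\ultra{p}$. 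Since also $A\in\ultra{p}$, the intersection $A\cap A(hg)^{-1}$ is non-empty, and any $a$ in it satisfies $a\cdot hg\in A$, whence $hg=a^{-1}(a\cdot hg)\in A^{-1}A$. Thus $h\in(A^{-1}A)F^{-1}$ for every $h\in G$, which shows $A^{-1}A$ is right syndetic.

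The only genuine subtlety is in choosing the correct continuous action: right multiplication by an arbitrary ultrafilter is not continuous in general, so only right multiplication by principal ultrafilters can be used to put a topological dynamical structure on $R$. Once that observation is made, the argument is a direct transcription of the classical proof that return times to open sets in a minimal dynamical system are syndetic.
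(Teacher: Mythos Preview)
Your proof is correct and follows essentially the same route as the paper: both realize the minimal right ideal $R=\ultra{p}\conv\beta G$ as a compact minimal $G$-system under the right action $\ultra{r}\mapsto\ultra{r}\conv\delta_{g}$, cover it by finitely many translates of $\overline{A}\cap R$, and read off syndeticity of $A^{-1}A$ from the resulting relation $A(hg)^{-1}\in\ultra{p}$. The only cosmetic difference is that the paper phrases the covering step as ``the complement would be a smaller right ideal'' rather than ``minimality makes every orbit dense,'' and it records the intermediate syndetic set $\{g:Ag^{-1}\cap A\in\ultra{p}\}$ explicitly before passing to $A^{-1}A$.
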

\begin{proof}
Fix $A \in \ultra{p}$.
Let $X$ be a minimal right ideal containing $\ultra{p}$.
Since $\ultra{p} \conv \beta G$ is a right ideal contained in $X$ it must be equal to $X$, so continuity of the map $\ultra{q} \mapsto \ultra{p} \conv \ultra{q}$ implies $X$ is compact.
Consider the continuous right action $T$ of $G$ on $X$ defined by $T^g(\ultra{p}) = \ultra{p} \conv \delta_g$.
The set $U := \overline{A} \cap X$ is open in $X$ and contains $\ultra{p}$.
We claim that the collection $\{ (T^g)^{-1} U : g \in G\}$ covers $X$.
Indeed, if not then the complement $V$ of its union is a closed, non-empty, $T$-invariant subset of $X$.
This implies that $V$ is a right ideal, because any $\ultra{q}$ in $V$ satisfies $\ultra{q} \conv \beta G = \ultra{q} \conv \overline{G} = \cl(\ultra{q} \conv G) \subset \cl(V) = V$ where $\cl$ denotes the closure of a set in $\beta G$.

Since the sets $(T^g)^{-1} U$ cover $X$ we can extract a finite subcover $(T^{g_1})^{-1} U,\dots,(T^{g_n})^{-1} U$.
Thus for every $g \in G$ there is some $1 \le i \le n$ for which $T^g(\ultra{p}) \in (T^{g_i})^{-1} U$.
We can rewrite this as $\ultra{p} \conv \princip_{gg_i} \in U$, which is the same as $A(gg_i)^{-1} \in \ultra{p}$.
Putting $F = \{g_1,\dots,g_n\}$, we have proved that $\{ g \in G : Ag^{-1} \in \ultra{p} \}$ is right syndetic.
Since $A \in \ultra{p}$ the larger set $\{ g \in G : Ag^{-1} \cap A \in \ultra{p} \}$ is also right syndetic.
But $g \in A^{-1} A$ if and only if $Ag^{-1} \cap A$ is non-empty, so $A^{-1}A \supset \{ g \in G : Ag^{-1} \cap A \in \ultra{p} \}$ is right syndetic, as desired.
\end{proof}

A subset of a group $G$ is a \define{right central} set if it belongs to some minimal idempotent ultrafilter, and a \define{right central$^*$} set if it belongs to every minimal idempotent ultrafilter.

\begin{lemma}
\label{lem:central-star-sets-are-syndetic}
Let $G$ be a group and let $A \subset G$ be right central$^*$ set.
Then $A$ is right syndetic.
\end{lemma}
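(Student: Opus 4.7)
The plan is to argue by contradiction: if $A$ is not right syndetic, I will produce a minimal idempotent ultrafilter $\ultra{r}$ with $A \notin \ultra{r}$, contradicting the right central${}^*$ hypothesis.

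First, I would unpack the failure of right syndeticity. Since $AF \ne G$ for every finite $F\subset G$, we have $G \setminus AF = \bigcap_{f\in F}(G\setminus A f)$ non-empty for every finite $F$, so the collection $\{G \setminus Af : f\in G\}$ enjoys the finite intersection property. Extending this to an ultrafilter $\ultra{q}$ on $G$, we obtain $Af \notin \ultra{q}$ for every $f\in G$. Unwinding the definition in \eqref{eqn:ultrafilterConvolution}, $B\in\ultra{q}\conv\princip_{h}$ iff $Bh^{-1}\in\ultra{q}$, so for every $h\in G$ we have $A\notin\ultra{q}\conv\princip_{h}$, equivalently $\ultra{q}\conv\princip_{h}\in\overline{G\setminus A}$.

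Next I would consider the right ideal $R := \ultra{q}\conv\beta G$. Because left multiplication by $\ultra{q}$ is continuous in $\beta G$ (the paper's convention; see the discussion after \eqref{eqn:ultrafilterConvolution}), the set $R$ is the continuous image of the compact space $\beta G$ and is therefore closed; it is also a right ideal by construction. Continuity of $L_{\ultra{q}} : \ultra{s} \mapsto \ultra{q}\conv\ultra{s}$ together with the density of the principal ultrafilters $\{\princip_h : h \in G\}$ in $\beta G$ shows that $\{\ultra{q}\conv\princip_h : h\in G\}$ is dense in $R$. Since $\overline{G\setminus A}$ is clopen and, by the previous paragraph, contains this dense subset, we conclude $R \subseteq \overline{G\setminus A}$.

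Finally, invoking the fact recalled in Section~\ref{sec:minimalIdempotents}—that every right ideal contains a minimal right ideal, and every minimal right ideal contains an idempotent by Ellis's lemma—I pick a minimal right ideal $R'\subseteq R$ and an idempotent $\ultra{r}\in R'$. Then $\ultra{r}$ is a minimal idempotent ultrafilter, but $\ultra{r}\in R \subseteq \overline{G\setminus A}$ means $G\setminus A\in\ultra{r}$, so $A\notin\ultra{r}$. This contradicts the assumption that $A$ belongs to every minimal idempotent ultrafilter.

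The only genuinely delicate point is the density step: one must use that in this convention left multiplication by $\ultra{q}$ is continuous (so that $R$ is both closed and contains a dense copy of $\ultra{q}\conv G$), whereas right multiplication generally is not; apart from that, the argument is a direct application of Ellis plus an FIP extension.
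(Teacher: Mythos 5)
Your proof is correct and follows essentially the same route as the paper: by contradiction, non-syndeticity of $A$ gives the finite intersection property for the translates of the complement, one builds a right ideal of $\beta G$ contained in $\overline{G\setminus A}$, and Ellis's lemma produces a minimal idempotent there avoiding $A$. The only (harmless) difference is in how the ideal is exhibited: the paper takes the closed set of \emph{all} ultrafilters containing the filter $\{(G\setminus A)g^{-1} : g\in G\}$ and checks the ideal property directly from \eqref{eqn:ultrafilterConvolution}, whereas you fix one such ultrafilter $\ultra{q}$ and use continuity of $\ultra{s}\mapsto\ultra{q}\conv\ultra{s}$ together with density of the principal ultrafilters to place $\ultra{q}\conv\beta G$ inside $\overline{G\setminus A}$.
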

\begin{proof}
Suppose $A$ is not right syndetic.
Then its complement $B$ is right thick, meaning that for every finite subset $F$ of $G$ there is some $h \in G$ such that $hF \subset B$.
This implies that $\mathscr{F} = \{ B g^{-1} : g \in G \}$ is a filter, so there are ultrafilters on $G$ containing $\mathscr{F}$.
The collection $I$ of ultrafilters that contain $\mathscr{F}$ is a closed subset of $\beta G$.
Moreover, it is a right-ideal, for if $\ultra{p} \supset \mathscr{F}$ and $\ultra{q} \in \beta G$ then $\ultra{p} \conv \ultra{q}$ contains $\mathscr{F}$ by \eqref{eqn:ultrafilterConvolution}.
As remarked above, any right ideal contains a minimal right ideal, so there is a minimal idempotent in $I$.
This implies that $B$ is right central, so $A$ is not right central$^*$.
%Then for every finite set $F\subset G$ there exists $g_{F}\in G$ such that $A\cap g_{F}F = \emptyset$.
%Let
%\[
%q := \{ \bigcup_{F\subset G \text{ finite}} g_{F}(F\cap \bigcap_{g\in \tilde F} Fg)
%\ :\tilde F \subset G \text{ finite}\}.
%\]
%Then $q$ is a filter on $G$ consisting of non-empty sets, so it describes a closed subset of $\beta G$.
%By construction that closed subset is closed under multiplication by elements of $G$ on the right, so it is a right ideal in $\beta G$ and therefore it contains a minimal idempotent $\ultra{p}$.
%By construction again, $A\not\in\ultra{p}$.
\end{proof}

\section{Minimally almost periodic groups}
\label{sec:mapGroups}

Let $G$ be any group.
Denote by $\Cb(G)$ the Banach space of all bounded functions $f : G \to \mathbb{C}$ equipped with the supremum norm.
The $G$ actions $\lmult$ and $\rmult$ on $\Cb(G)$, defined by $(\lmult_g f)(x) = f(gx)$ and $(\rmult_g f)(x) = f(xg)$ respectively, are isometric.
A function $f \in \Cb(G)$ is called \define{almost periodic} if the subset $\{ \lmult_g f \,:\, g \in G \}$ of $\Cb(G)$ is relatively compact.
Given a representation $\phi$ of $G$ on a finite-dimensional, complex Hilbert space $V$ and vectors $x,y$ in $V$, the function $f(g) = \langle \phi(g)x, y \rangle$ is almost-periodic.
A group $G$ is \define{minimally almost periodic} if the only almost periodic functions on $G$ are the constant functions.

The following result, a version of \cite[Theorem~2.2]{MR2354320}, will be used repeatedly below.

\begin{theorem}
\label{thm:minimalInvariant}
Let $G$ be a minimally almost periodic group, let $(\mathbf{X},\mu)$ be a measure-preserving system, and let $\ultra{p}$ be a minimal idempotent ultrafilter on $G$.
For any $f$ in $\lp^2(\mathbf{X},\mu)$ we have
\begin{equation}
\label{eqn:minimalProjection}
\lim_{g \to \ultra{p}} T^g f = \condex{f}{\alminv^\mu\mathbf{X}}
\end{equation}
in the weak topology of $\lp^2(\mathbf{X},\mu)$.
\end{theorem}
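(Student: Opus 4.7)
The plan is to combine the Jacobs--de Leeuw--Glicksberg splitting recalled at the end of Section~\ref{sec:systems} with the minimal almost periodicity hypothesis, realizing $\condex{\cdot}{\alminv^\mu \mathbf{X}}$ as the unique minimal two-sided ideal element of the compact right-topological semigroup obtained by closing $\mathscr{S} = \{T^g : g \in G\}$ in the weak operator topology. Write $\mathscr{H} = \lp^2(\mathbf{X},\mu)$, let $P_\mathsf{r}$ denote the orthogonal projection onto the reversible subspace $\mathscr{H}_\mathsf{r}$, and let $\overline{\mathscr{S}}$ denote the WOT-closure of $\mathscr{S}$ inside the unit ball of $\mathcal{B}(\mathscr{H})$.

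The first step is to identify $\mathscr{H}_\mathsf{r}$ with the space of $G$-invariant vectors, equivalently with $\lp^2(\alminv^\mu \mathbf{X},\mu)$. Since $\mathscr{H}_\mathsf{r}$ is spanned by finite-dimensional $\mathscr{S}$-invariant subspaces, each such subspace carries a finite-dimensional unitary representation of $G$ whose matrix coefficients $g \mapsto \langle T^g x, y\rangle$ are bounded almost-periodic functions. Minimal almost periodicity forces these coefficients to be constant, and the identity $\|T^g x - x\|^2 = 2\|x\|^2 - 2\Re\langle T^g x,x\rangle$ then yields $T^g x = x$ for every $g \in G$ and every $x \in \mathscr{H}_\mathsf{r}$. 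Consequently $P_\mathsf{r}$ coincides with the conditional expectation onto $\alminv^\mu \mathbf{X}$, and \eqref{eqn:minimalProjection} is immediate on $\mathscr{H}_\mathsf{r}$ since the left-hand side reduces to $f$.

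The second step handles $\mathscr{H}_\mathsf{o} = \ker P_\mathsf{r}$. Using weak-weak continuity of bounded linear operators and the continuity of $\ultra{q} \mapsto \ultra{p} \ast \ultra{q}$ on $\beta G$, I would verify directly that $\Phi : \ultra{p} \mapsto T^\ultra{p}$ is a continuous semigroup homomorphism from $(\beta G,\ast)$ into $\overline{\mathscr{S}}$. Because every $T^g$ is the identity on $\mathscr{H}_\mathsf{r}$ (by the first step) and preserves the weakly closed subspace $\mathscr{H}_\mathsf{o}$, both properties pass to arbitrary WOT limits; hence every $S \in \overline{\mathscr{S}}$ satisfies $SP_\mathsf{r} = P_\mathsf{r} S = P_\mathsf{r}$, which shows that $\{P_\mathsf{r}\}$ is a two-sided ideal of $\overline{\mathscr{S}}$ and therefore its minimal two-sided ideal. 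The preimage $\Phi^{-1}(\{P_\mathsf{r}\})$ is then a two-sided ideal of $\beta G$, so it contains the minimal two-sided ideal of $\beta G$; in particular $T^\ultra{p} = P_\mathsf{r}$ for every minimal idempotent $\ultra{p}$, which is exactly \eqref{eqn:minimalProjection}.

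The step I expect to require the most care is the semigroup homomorphism property of $\Phi$: because $\beta G$ is only right-semitopological, continuity of the convolution is available in only one of its two slots, so the verification $T^{\ultra{p} \ast \ultra{q}} = T^\ultra{p} T^\ultra{q}$ must be organized by fixing $\ultra{p}$, checking the identity first for principal $\ultra{q}$ directly from the definition of $\ast$ in the spirit of Proposition~\ref{prop:ultrafilterAction}, and then extending to arbitrary $\ultra{q}$ via density together with the WOT-continuity of left multiplication by the fixed bounded operator $T^\ultra{p}$.
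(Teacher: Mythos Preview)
Your argument is correct and takes a genuinely different route from the paper's. The paper argues directly on a single $f$: it forms the weak limit $\phi=\lim_{g\to\ultra{p}}T^g f$, uses idempotence of $\ultra{p}$ together with Lemma~\ref{lem:centralSyndeticity} (syndeticity of $A^{-1}A$ for $A\in\ultra{p}$) to show that the \emph{norm} orbit $\{T^g\phi:g\in G\}$ is totally bounded, invokes minimal almost periodicity to force $T^g\phi=\phi$, and finally checks the projection identity by pairing against invariant functions. Your proof instead works at the operator level: you identify $\mathscr{H}_\mathsf{r}$ with the invariant vectors via minimal almost periodicity, observe that this makes $P_\mathsf{r}$ a two-sided zero of $\overline{\mathscr{S}}$ so that $\{P_\mathsf{r}\}$ is the kernel, and then transport this through the (anti-)homomorphism $\Phi:\beta G\to\overline{\mathscr{S}}$ to conclude that every element of the kernel of $\beta G$---in particular every minimal idempotent---maps to $P_\mathsf{r}$.

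What each approach buys: the paper's proof is self-contained modulo Lemma~\ref{lem:centralSyndeticity} and does not require the Jacobs--de Leeuw--Glicksberg machinery beyond what is used elsewhere; your approach bypasses Lemma~\ref{lem:centralSyndeticity} entirely and is more structural, exploiting the JdG splitting already recalled in Section~\ref{sec:systems} together with standard facts about kernels of compact right-topological semigroups. Two small points worth making explicit when you write it up: first, surjectivity of $\Phi$ (needed so that $\Phi^{-1}(\{P_\mathsf{r}\})$ is nonempty) follows because $\Phi(\beta G)$ is a closed subsemigroup containing $\mathscr{S}$; second, depending on the left/right conventions in Proposition~\ref{prop:ultrafilterAction} you may obtain $T^{\ultra{p}\ast\ultra{q}}=T^\ultra{q}T^\ultra{p}$ rather than $T^\ultra{p}T^\ultra{q}$, but an anti-homomorphism preserves two-sided ideals just as well, so your ideal argument is unaffected.
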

\begin{proof}
Fix $f \in \lp^2(\mathbf{X},\mu)$.
Equipped with the weak topology, the unit ball of $\lp^2(\mathbf{X},\mu)$ is compact and Hausdorff so the limit in \eqref{eqn:minimalProjection} makes sense via Proposition~\ref{prop:ultrafilterExtension}.
Let $\phi$ be the limit of the sequence $T^g f$ along $\ultra{p}$.
We first show that $\phi$ belongs to $\lp^2(X,\alminv^\mu\mathbf{X},\mu)$.

We claim that the orbit $\{ T^g \phi : g \in G \}$ is relatively compact in the norm topology.
Fix $\epsilon > 0$.
We have
\begin{equation*}
\lim_{h \to \ultra{p}} T^h \phi = \lim_{h \to \ultra{p}} T^h \lim_{g \to \ultra{p}} T^g f = \lim_{g \to \ultra{p} \conv \ultra{p}} T^g f = \phi
\end{equation*}
by Proposition~\ref{prop:ultrafilterAction} because $\ultra{p}$ is idempotent.
Combined with
\begin{equation*}
\nbar T^g \phi - \phi \nbar^2 = \langle T^g \phi, T^g \phi \rangle - \langle T^g \phi, \phi \rangle - \langle \phi, T^g \phi\rangle + \langle \phi, \phi \rangle
\end{equation*}
we see that $A := \{ g \in G : \nbar T^g \phi - \phi \nbar < \epsilon/2 \} = A^{-1}$ belongs to $\ultra{p}$.
Thus $A A^{-1}$ is syndetic by Lemma~\ref{lem:centralSyndeticity}.
Let $F \subset G$ be finite with $A A^{-1} F = G$.
Fix $g \in G$ and write $g = a b^{-1} k$ accordingly.
We see that
\begin{equation*}
\nbar T^g \phi - T^k \phi \nbar = \nbar T^{a b^{-1}} \phi - \phi \nbar = \nbar T^a \phi - T^b \phi \nbar \le \epsilon
\end{equation*}
so the orbit $\{T^g \phi : g \in G \}$ is covered by the balls of radius $\epsilon$ centered at $T^k \phi$ as $k$ runs through $F$.

It follows that for any $\xi \in \lp^2(\mathbf{X},\mu)$ the function $g \mapsto \langle T^g \phi, \xi \rangle$ is almost periodic.
It is therefore constant because $G$ is minimally almost periodic.
Thus $T^g \phi = \phi$ in $\lp^2(\mathbf{X},\mu)$ for every $g \in G$.
Let $\varphi$ be a representative of $\phi$.
We have $T^g \varphi \sim \varphi$ for every $g \in G$, where $\sim$ denotes equality almost everywhere.
Since $\alminv^\mu\mathbf{X}$ contains the measure zero sets, it follows that $\varphi$ is $\alminv^\mu\mathbf{X}$ measurable and that $\phi \in \lp^2(X,\alminv^\mu\mathbf{X},\mu)$.

Lastly, for any $\psi \in \lp^2(X,\alminv^\mu\mathbf{X},\mu)$ we have
\begin{equation*}
\int \phi \cdot \psi \intd\mu = \lim_{g \to \ultra{p}} \int T^g f \cdot \psi \intd\mu = \lim_{g \to \ultra{p}} \int f \cdot (T^g)^{-1} \psi \intd\mu = \int f \cdot \psi \intd\mu
\end{equation*}
so $\phi$ is the orthogonal projection of $f$ on $\lp^2(X,\alminv^\mu\mathbf{X},\mu)$.
\end{proof}

\section{Characteristic factors in sated systems}
\label{sec:chf}

In this section we prove Theorem~\ref{thm:characteristicFactors}.
To do so we need the following construction of a relatively independent self-joining of a measure-preserving system $(\mathbf{X},\mu)$ in $\cat[meas](G^2)$ over $\alminv_2^\mu \mathbf{X}$, by which we mean a measure $\nu$ on $\mathbf{X} \times \mathbf{X}$ satisfying \eqref{eq:cond-prod} below.
Usually (see \cite[Chapter~5]{MR603625}, for example) one would construct such a joining using a disintegration of $\mu$ over $\alminv_2^\mu \mathbf{X}$, but the existence of such a disintegration is not clear when $X$ is non-metrizable.
In our setting, the need for such a disintegration can be circumvented by using limits along minimal idempotent ultrafilters to give an explicit description of the ergodic projection.

\begin{lemma}
\label{lem:cond-prod}
Let $G$ be a minimally almost periodic group and let $(\mathbf{X},\mu)$ be a measure-preserving system in $\cat[meas](G)$.
Then there exists a unique Baire measure $\nu$ on $X \times X$ such that
\begin{equation}
\label{eq:cond-prod}
\int f_1 \otimes f_2 \intd\nu = \int \condex{f_1}{\alminv^\mu\mathbf{X}} \cdot \condex{f_2}{\alminv^\mu\mathbf{X}} \intd\mu
\end{equation}
for any $f_1,f_2 \in \cont(X)$.
\end{lemma}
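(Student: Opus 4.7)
The plan is to define $\nu$ as a Baire probability measure via the Riesz--Markov--Kakutani theorem by specifying its integrals against continuous functions, using a minimal idempotent ultrafilter to bypass the need for a disintegration of $\mu$. Concretely, fix any minimal idempotent ultrafilter $\ultra{p}$ on $G$ (which exists by the discussion in Section~\ref{sec:minimalIdempotents}) and, for each $F\in\cont(X\times X)$, consider the bounded function
\[
\Phi_F : G \to \mathbb{R}, \qquad \Phi_F(g) = \int F(x, T^g x)\intd\mu(x).
\]
Since $\Phi_F$ takes values in the compact interval $[-\nbar F\nbar_{\infty},\nbar F\nbar_{\infty}]$, Proposition~\ref{prop:ultrafilterExtension} allows us to define
\[
\Lambda(F) := \lim_{g \to \ultra{p}} \Phi_F(g).
\]

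Next I would verify that $\Lambda$ is a positive linear functional on $\cont(X\times X)$ with $\Lambda(1)=1$: linearity is immediate from linearity of the integral and of the ultrafilter limit, positivity follows because $F\geq 0$ forces $\Phi_F \geq 0$ pointwise, and normalization is trivial. The Riesz--Markov--Kakutani theorem then furnishes a unique Baire probability measure $\nu$ on $X\times X$ with $\int F\intd\nu = \Lambda(F)$ for every $F\in\cont(X\times X)$.

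It remains to check the defining identity \eqref{eq:cond-prod} on pure tensors. For $F = f_1\otimes f_2$ with $f_1,f_2\in\cont(X)$ we have
\[
\Phi_F(g) = \int f_1(x)\cdot f_2(T^g x)\intd\mu(x) = \langle f_1, T^g f_2\rangle_{\lp^2(\mathbf{X},\mu)},
\]
and Theorem~\ref{thm:minimalInvariant} (applicable because $G$ is minimally almost periodic) tells us $T^g f_2 \to \condex{f_2}{\alminv^\mu\mathbf{X}}$ weakly in $\lp^2$ as $g\to\ultra{p}$. Hence
\[
\Lambda(f_1\otimes f_2) = \bigl\langle f_1, \condex{f_2}{\alminv^\mu\mathbf{X}}\bigr\rangle = \int \condex{f_1}{\alminv^\mu\mathbf{X}} \cdot \condex{f_2}{\alminv^\mu\mathbf{X}}\intd\mu,
\]
the last equality holding because $\condex{f_2}{\alminv^\mu\mathbf{X}}$ is $\alminv^\mu\mathbf{X}$-measurable.

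Finally, uniqueness of $\nu$ follows from density: the Stone--Weierstrass theorem shows that the algebraic tensor product $\cont(X)\otimes \cont(X)$ is uniformly dense in $\cont(X\times X)$, so \eqref{eq:cond-prod} determines the integral of $\nu$ against every continuous function on $X\times X$, and a Baire probability measure on a compact Hausdorff space is determined by its action on $\cont(X\times X)$. The only delicate point is ensuring that the ultrafilter-limit construction really produces the desired conditional-expectation integral; this is exactly what the minimal almost periodicity of $G$ buys us through Theorem~\ref{thm:minimalInvariant}, and is where the argument would fail without that hypothesis.
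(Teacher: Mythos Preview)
Your proposal is correct and follows essentially the same approach as the paper: both fix a minimal idempotent ultrafilter $\ultra{p}$, push the diagonal measure forward via $(x,T^g x)$, and use Theorem~\ref{thm:minimalInvariant} to identify the limit on pure tensors, with uniqueness coming from the density of $\cont(X)\otimes\cont(X)$ in $\cont(X\times X)$. The only cosmetic difference is that the paper takes the ultrafilter limit directly in the compact space of Baire probability measures on $X\times X$, whereas you build the positive linear functional $\Lambda$ first and then invoke Riesz--Markov--Kakutani; these are two phrasings of the same construction.
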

\begin{proof}
Uniqueness follows immediately by density of $\cont(X) \otimes \cont(X)$ in $\cont(X^2)$, so it remains to show the existence.
To this end fix a minimal idempotent ultrafilter $\ultra{p}$ on $G$.
Since $G$ is minimally almost periodic Theorem~\ref{thm:minimalInvariant} implies that
\begin{equation*}
\lim_{g \to \ultra{p}} T_2^g f = \condex{f}{\alminv^\mu\mathbf{X}}
\end{equation*}
in the weak topology of $\lp^2(\mathbf{X},\mu)$ for every $f \in \lp^2(\mathbf{X},\mu)$.

Let $\delta : X \to X^2$ be the diagonal embedding and let $\lambda$ be the push-forward $\delta \mu$.
Define an action $R$ of $G$ on $X^2$ by $R^g(x_1,x_2) = (x_1,T^g x_2)$.
For any $f_1,f_2 \in \cont(X)$ we have
\begin{equation*}
\lim_{g \to \ultra{p}} \int f_1 \otimes f_2 \intd(R^g \lambda)
=
\lim_{g \to \ultra{p}} \int f_1 \cdot T^g f_2 \intd\mu
=
\int \condex{f_1}{\alminv^\mu\mathbf{X}} \cdot \condex{f_2}{\alminv^\mu\mathbf{X}} \intd\mu
\end{equation*}
by the above.
Since the space of Baire probability measures on $X^2$ is a compact, Hausdorff space the sequence $g \mapsto R^g \lambda$ has a limit along $\ultra{p}$.
Let $\nu$ be this limit.
The above calculation implies that
\begin{equation*}
\int f_1 \otimes f_2 \intd\nu = \int \condex{f_1}{\alminv^\mu\mathbf{X}} \cdot \condex{f_2}{\alminv^\mu\mathbf{X}} \intd\mu
\end{equation*}
for all $f_1,f_2 \in \cont(X)$ as desired.
\end{proof}

Given a measure-preserving system $(\mathbf{X},\mu)$ in $\cat[meas](G^2)$, the measure $\nu$ obtained by applying Lemma~\ref{lem:cond-prod} to the measure-preserving system $(X,T_2,\mu)$ in $\cat[meas](G)$ is called the \define{relatively independent self-joining} of $\mu$ over $\alminv_2^\mu \mathbf{X}$.
It follows immediately from \eqref{eq:cond-prod} and the properties of conditional expectation that $\nu$ is invariant under the commuting $G$ actions $R_1 = T_{1} \times T_{12}$ and $R_2 = T_2 \times I$.
Thus $(X^2,R,\nu)$ is a measure-preserving system in $\cat[meas](G^2)$.
Lastly, writing $\pi_1$ and $\pi_2$ for the coordinate projections $X^2 \to X$, note that \eqref{eq:cond-prod} implies that $\pi_1 \nu = \pi_2 \nu = \mu$ because all three measures agree on $\cont(X)$.

We now turn to the proof of Theorem~\ref{thm:characteristicFactors}, which begins with the following lemma.

\begin{lemma}
\label{lem:char1}
Let $G$ be a discrete, minimally almost periodic group and $\ultra{p}$ a minimal idempotent ultrafilter on $G$.
Let $(\mathbf{X},\mu)$ be a measure-preserving system in $\cat[meas](G^2)$ and let $f_1$ in $\cont(X)$ be bounded by $1$.
Suppose that $(\mathbf{X},\mu)$ is $(\epsilon^2,f_{1},\inv_{1,2})$ sated.
Then
\[
\left| \lim_{g\to \ultra{p}} \int f_{0} \cdot T_{1}^{g} f_{1} \cdot T_{12}^{g} f_{2} \intd\mu - \lim_{g\to \ultra{p}} \int f_{0} \cdot T_{1}^{g} \condex{f_{1}}{\inv_{1,2}\mathbf{X}} \cdot T_{12}^{g} f_{2} \intd\mu \right| < \epsilon.
\]
for any $f_{0},f_{2} \in \cont(X)$ bounded by $1$.
\end{lemma}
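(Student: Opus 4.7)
Let $\tilde{f}_1 := f_1 - \condex[\mu]{f_1}{\inv_{1,2}\mathbf{X}}$, so the claim reduces to the estimate
\[
\left| \lim_{g\to\ultra{p}} \int f_0 \cdot T_1^g \tilde{f}_1 \cdot T_{12}^g f_2 \intd\mu \right| < \epsilon.
\]
Introduce $u(g) := T_1^g \tilde{f}_1 \cdot T_{12}^g f_2 \in \lp^2(\mu)$. By Cauchy--Schwarz the integral is bounded by $\nbar w\text{-}\lim_{g \to \ultra{p}} u(g) \nbar_2$, and by the van der Corput trick (Proposition~\ref{prop:vdc}) it suffices to show
\[
\left| \lim_{h \to \ultra{p}} \lim_{g \to \ultra{p}} \langle u(hg), u(g) \rangle \right| < \epsilon^2.
\]

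For the inner $g$-limit, expand $\langle u(hg), u(g)\rangle$ using $T_i^{hg} = T_i^h T_i^g$ and apply $T_{12}^{-g}$ as a measure-preserving change of variables. Since $T_{12}^{-g} T_1^g = T_2^{-g}$ and $T_1, T_2$ commute, the integrand collapses to $T_2^{-g}(T_1^h \tilde{f}_1 \cdot \tilde{f}_1) \cdot T_{12}^h f_2 \cdot f_2$, avoiding the conjugation that would otherwise appear in the non-abelian case. Since $G$ is minimally almost periodic, Theorem~\ref{thm:minimalInvariant} applied to the $T_2$-action then identifies
\[
\lim_{g \to \ultra{p}}\langle u(hg), u(g)\rangle = \int T_1^h \tilde{f}_1 \cdot \tilde{f}_1 \cdot \condex{T_{12}^h f_2 \cdot f_2}{\alminv_2^\mu \mathbf{X}} \intd\mu.
\]

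Next, recast this integral on the relatively independent self-joining $(\mathbf{X}^2,\nu)$ of $\mu$ over $\alminv_2^\mu \mathbf{X}$ provided by Lemma~\ref{lem:cond-prod}, with commuting actions $R_1 = T_1 \times T_{12}$ and $R_2 = T_2 \times I$. The integral becomes $\int R_1^h(\tilde{f}_1 \otimes f_2) \cdot (\tilde{f}_1 \otimes f_2) \intd\nu$. A second application of Theorem~\ref{thm:minimalInvariant}, now to $R_1$ on the joining, identifies the $h$-limit as $\nbar \condex[\nu]{\tilde{f}_1 \otimes f_2}{\alminv_{R_1}^\nu \mathbf{X}^2} \nbar_2^2$. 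Since $R_2$ acts trivially on the second coordinate, the function $1 \otimes f_2$ is $\alminv_{R_2}^\nu$-measurable, so pulling it out of the conditional expectation bounds this by $\nbar f_2 \nbar_\infty^2 \cdot \nbar \condex[\nu]{\tilde{f}_1 \circ \pi_1}{\alminv_{R_1,R_2}^\nu \mathbf{X}^2} \nbar_2^2$, where $\pi_1 : \mathbf{X}^2 \to \mathbf{X}$ is the first-coordinate projection.

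Finally, close the estimate using satedness. The map $\pi_1 : (\mathbf{X}^2, \nu) \to (\mathbf{X}, \mu)$ is an extension in $\cat[meas](G^2)$, and $\condex[\mu]{\tilde{f}_1}{\inv_{1,2}\mathbf{X}} = 0$ by construction, so the satedness hypothesis gives $\nbar \condex[\nu]{\tilde{f}_1 \circ \pi_1}{\inv_{1,2}\mathbf{X}^2} \nbar_2 \leq \epsilon^2$. The main obstacle I anticipate is that satedness is formulated in terms of the topological idempotent class $\inv_{1,2}$, while the almost-invariant $\sigma$-algebra $\alminv_{R_1,R_2}^\nu \mathbf{X}^2$ arising naturally from the double MAP limit is a priori larger. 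To bridge this gap I would pass via Lemma~\ref{lem:top-extension} to a further extension on which the relevant almost-invariant functions are realized continuously, upgrading $\alminv$ to $\inv$ at the level of the extension before invoking satedness. This bookkeeping is the delicate part, but the necessary tools (Lemma~\ref{lem:top-extension} together with functoriality of the idempotent class $\inv_{1,2}$) are available.
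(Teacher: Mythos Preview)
Your proposal is correct and follows essentially the same route as the paper: define $u(g)=T_1^g\tilde f_1\cdot T_{12}^g f_2$, apply the van der Corput trick, compute the inner limit via Theorem~\ref{thm:minimalInvariant}, rewrite on the relatively independent self-joining $(\mathbf{X}^2,\nu)$ with actions $R_1=T_1\times T_{12}$ and $R_2=T_2\times I$, take the outer limit via Theorem~\ref{thm:minimalInvariant} again, and finish with satedness after passing to a Gelfand extension.

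The one place your argument diverges from the paper is the very last step. You first bound $\nbar\condex[\nu]{\tilde f_1\otimes f_2}{\alminv_{R_1}}\nbar_2^2$ by $\nbar f_2\nbar_\infty^2\,\nbar\condex[\nu]{\tilde f_1\circ\pi_1}{\alminv_{R_1,R_2}^\nu}\nbar_2^2$ and then invoke satedness in the form \eqref{eqn:satedness}, which forces you to upgrade the \emph{join} $\alminv_{R_1,R_2}^\nu$ to $\inv_{1,2}$ on some further extension. This works (in the Gelfand extension of Lemma~\ref{lem:top-extension} every bounded $\alminv_{R_j}^\nu$-measurable function becomes a genuinely $R_j$-invariant continuous function, so the pullback of $\alminv_{R_1,R_2}^\nu$ lands inside $\lp^2(\inv_{1,2}\mathbf{Z})$), but it is slightly heavier than necessary. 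The paper instead uses the inner-product form \eqref{eqn:satedEquiv} of satedness directly with the test function $(f_2\circ\pi_2)\cdot\condex[\nu]{\phi\otimes f_2}{\alminv_{R_1}^\nu}$: since $R_2$ is the identity on the second coordinate, $f_2\circ\pi_2$ already lies in $\cont(\inv_2\mathbf{Y})$, so only the single conditional expectation onto $\alminv_{R_1}^\nu$ needs to be realized continuously on an extension. This halves the $\alminv$-to-$\inv$ bookkeeping you flagged as the delicate point.
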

\begin{proof}
Define $u : G \to \lp^2(\mathbf{X},\mu)$ by $u(g) = T_1^g \phi \cdot T_{12}^g f_{2}$ where $\phi = f_{1} - \condex{f_{1}}{\inv_{1,2}\mathbf{X}}$.
We have
\begin{equation*}
\lim_{g \to \ultra{p}} \langle u(hg), u(g) \rangle
=
\lim_{g \to \ultra{p}} \int (\phi \cdot T_1^h \phi) \cdot T_2^g (f_{2} \cdot T_{12}^h f_{2}) \intd\mu
=
\int \condex{\phi \cdot T_1^h \phi}{\alminv^\mu_2 \mathbf{X}} \cdot \condex{f_{2} \cdot T_{12}^h f_{2}}{\alminv^\mu_2 \mathbf{X}} \intd\mu
\end{equation*}
for every $h \in G$ by Theorem~\ref{thm:minimalInvariant}.

Let $\nu$ be the relatively independent self-joining of $\mu$ over $\alminv^\mu_2 \mathbf{X}$, which exists by Lemma~\ref{lem:cond-prod}.
Write $\pi_1$ and $\pi_2$ for the coordinate projections $X \times X \to X$.
Define $\mathbf{Y} = (X \times X, R)$ where $R_1 = T_1 \times T_{12}$ and $R_2 = T_2 \times I$.
We know from the above that $(\mathbf{Y},\nu)$ is a $G \times G$ system and that $\pi_1 : (\mathbf{Y},\nu) \to (\mathbf{X},\mu)$ is a factor map.
Another application of Theorem~\ref{thm:minimalInvariant} yields
\begin{equation}
\label{eqn:vdcCalcuation}
\lim_{h \to \ultra{p}} \lim_{g \to \ultra{p}} \langle u(hg), u(g) \rangle
=
\lim_{h \to \ultra{p}} \int (\phi \otimes f_{2}) \cdot R_1^h (\phi \otimes f_{2}) \intd\nu
=
\int \phi \circ \pi_1 \cdot f_{2} \circ \pi_2 \cdot \condex{\phi \otimes f_{2}}{\alminv^\nu_1 \mathbf{Y}} \intd\nu.
\end{equation}
We have $f_{2} \circ \pi_2 \in \cont(\inv_{2}\mathbf{Y})$.
Passing to a further extension $\mathbf{Z}\to\mathbf{Y}$ we may assume that $\condex{\phi \otimes f_{2}}{\alminv^\nu_1 \mathbf{Z}}$ has a representative in $\cont(\inv_1 \mathbf{Z})$.
By definition of $(\epsilon^2,f_{1},\inv_{1,2})$ satedness it follows that \eqref{eqn:vdcCalcuation} is bounded by
\[
\epsilon^{2} \nbar f_{2}\nbar_{\infty} \nbar\phi \otimes f_{2}\nbar_{2}
\leq
\epsilon^{2} \nbar f_{2}\nbar_{\infty} \nbar\phi\nbar_{2} \nbar f_{2}\nbar_{\infty}
\leq
\epsilon^{2},
\]
so
\begin{equation*}
\left| \lim_{g \to \ultra{p}} \int f_{0} \cdot T_1^g(f_{1} - \condex{f_{1}}{\inv_{1,2} \mathbf{X}}) \cdot T_{12}^g f_2 \intd\mu \right| \le \epsilon
\end{equation*}
by Proposition~\ref{prop:vdc}.
\end{proof}

%\begin{lemma}
%\label{cor:char012}
%Let $\mathbf{X}$ be a $G^{2}$ system, $f_{0},f_{2} \in \lp^{\infty}(X,\mu)$, and $f_{1} \in \lp^{\infty}(\alminv^\mu_{1,2}\mathbf{X})$.
%Then
%\[
%\lim_{g\to p} \int f_{0} T_{1}^{g} f_{1} T_{12}^{g} f_{2}
%=
%\lim_{g\to p} \int \condexp(f_{0}|\alminv^\mu_{1,12}\mathbf{X}) T_{1}^{g} \condexp(f_{1}|\alminv^\mu_{1,2}\mathbf{X}) T_{12}^{g} \condexp(f_{2}|\alminv^\mu_{2,12}\mathbf{X}).
%\]
%\end{lemma}

\begin{theorem}
\label{thm:chf2}
Let $G$ be a minimally almost periodic group and let $(\mathbf{X},\mu)$ be a measure-preserving system in $\cat[erg](G^2)$.
For any $\epsilon > 0$ and any $f_1$ in $\lp^\infty(X,\mu)$ bounded by $1$ there is an extension $\pi : (\mathbf{Y},\nu) \to (\mathbf{X},\mu)$ in $\cat[erg](G^2)$ such that
\[
\left| \lim_{g \to \ultra{p}} \int f_0 \cdot T_1^g f_1 \cdot T_{12}^g f_2 \intd\mu - \lim_{g \to \ultra{p}} \int \condex{f_0 \circ \pi}{\alminv^\nu_{1,12} \mathbf{Y}} \cdot T_1^g \condex{f_1 \circ \pi}{\inv_{1,2}\mathbf{Y}} \cdot T_{12}^g \condex{f_2 \circ \pi}{\alminv^\nu_{12,2} \mathbf{Y}} \intd\nu \right| < \epsilon
\]
for all minimal idempotent ultrafilters $\ultra{p}$ on $G$ and all $f_0,f_2$ in $\lp^\infty(X,\mu)$ bounded by $1$.
\end{theorem}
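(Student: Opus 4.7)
The plan is to first use Lemma~\ref{lem:char1} to replace $f_1$ by its conditional expectation onto $\inv_{1,2}\mathbf{Y}$ inside a suitably sated extension, and then observe that the further replacements of $f_0$ and $f_2$ by conditional expectations onto the measurable sub-$\sigma$-algebras $\alminv^{\nu}_{1,12}\mathbf{Y}$ and $\alminv^{\nu}_{12,2}\mathbf{Y}$ come for free from the joining structure $\inv_{1,2}=\inv_{1}\vee\inv_{2}$ combined with Theorem~\ref{thm:minimalInvariant}.

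First I would apply Lemma~\ref{lem:top-extension} with $F=\{f_{1}\}$ to pass to an extension on which $f_{1}\circ\pi$ admits a continuous representative, then invoke Theorem~\ref{thm:epsilon-sated} to obtain a further extension $(\mathbf{Y},\nu)\to(\mathbf{X},\mu)$ in $\cat[erg](G^{2})$ that is $(\epsilon^{2},f_{1}\circ\pi,\inv_{1,2})$-sated in $\cat[erg]$; by Proposition~\ref{prop:erg-sated} this satedness transfers to $\cat[meas]$. Lemma~\ref{lem:char1} then gives, uniformly over $f_{0},f_{2}$ bounded by $1$ and over minimal idempotent ultrafilters $\ultra{p}$,
\[
\left|\lim_{g\to\ultra{p}}\int f_{0}\cdot T_{1}^{g}f_{1}\cdot T_{12}^{g}f_{2}\intd\mu - \lim_{g\to\ultra{p}}\int(f_{0}\circ\pi)\cdot T_{1}^{g}F_{1}\cdot T_{12}^{g}(f_{2}\circ\pi)\intd\nu\right| < \epsilon,
\]
where $F_{1}=\condex{f_{1}\circ\pi}{\inv_{1,2}\mathbf{Y}}$.

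It remains to show that $\lim_{g\to\ultra{p}}\int(f_{0}\circ\pi)\cdot T_{1}^{g}F_{1}\cdot T_{12}^{g}(f_{2}\circ\pi)\intd\nu$ equals $\lim_{g\to\ultra{p}}\int F_{0}\cdot T_{1}^{g}F_{1}\cdot T_{12}^{g}F_{2}\intd\nu$, where $F_{0}=\condex{f_{0}\circ\pi}{\alminv^{\nu}_{1,12}\mathbf{Y}}$ and $F_{2}=\condex{f_{2}\circ\pi}{\alminv^{\nu}_{12,2}\mathbf{Y}}$. Since $\inv_{1,2}=\inv_{1}\vee\inv_{2}$, Stone--Weierstrass lets one $\lp^{2}$-approximate $F_{1}$ by finite sums $\sum h_{1}^{(i)}h_{2}^{(i)}$ with $h_{1}^{(i)}\in\cont(\inv_{1}\mathbf{Y})$ (thus $T_{1}$-invariant) and $h_{2}^{(i)}\in\cont(\inv_{2}\mathbf{Y})$ ($T_{2}$-invariant), reducing to the case $F_{1}=h_{1}h_{2}$. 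Applying $T_{12}^{-g}$ to the integrand and using $T_{12}^{-g}T_{1}^{g}=T_{2}^{-g}$ together with $T_{2}$-invariance of $h_{2}$ gives
\[
\int(f_{0}\circ\pi)h_{1}(T_{1}^{g}h_{2})T_{12}^{g}(f_{2}\circ\pi)\intd\nu = \int T_{12}^{-g}((f_{0}\circ\pi)h_{1})\cdot h_{2}\cdot(f_{2}\circ\pi)\intd\nu;
\]
passing to the limit via self-adjointness of the projection onto $\lp^{2}(\alminv^{\nu}_{12}\mathbf{Y})$ together with Theorem~\ref{thm:minimalInvariant} produces $\int h_{2}(f_{2}\circ\pi)\cdot\condex{(f_{0}\circ\pi)h_{1}}{\alminv^{\nu}_{12}\mathbf{Y}}\intd\nu$. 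Since $h_{2}\psi$ is $\alminv^{\nu}_{12,2}$-measurable and $h_{1}\psi$ is $\alminv^{\nu}_{1,12}$-measurable for every $\alminv^{\nu}_{12}$-measurable $\psi$, elementary manipulation of conditional expectations permits replacing $f_{2}\circ\pi$ by $F_{2}$ and $f_{0}\circ\pi$ by $F_{0}$ with no further error.

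The main obstacle is this last reduction: navigating the algebra of conditional expectations onto the various $\alminv^{\nu}_{i}$ carefully enough to produce exactly the $\sigma$-algebras $\alminv^{\nu}_{1,12}$ and $\alminv^{\nu}_{12,2}$ appearing in the target, and verifying that the topological satedness hypothesis imposed only on $f_{1}$ indeed suffices to exhibit measurable characteristic factors for all three functions. One must also keep track of the $\lp^{2}$-approximation of $F_{1}$ by single products when plugged back into the full correlation.
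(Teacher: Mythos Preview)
Your proposal is correct and follows essentially the same route as the paper: pass to an $(\epsilon^{2},f_{1}\circ\pi,\inv_{1,2})$-sated ergodic extension via Theorem~\ref{thm:epsilon-sated} and Proposition~\ref{prop:erg-sated}, invoke Lemma~\ref{lem:char1}, reduce $F_{1}$ by density to a product $h_{1}h_{2}$ of $T_{1}$- and $T_{2}$-invariant functions, and then use Theorem~\ref{thm:minimalInvariant} for the $T_{12}$ action to see that only the $\alminv^{\nu}_{1,12}$- and $\alminv^{\nu}_{12,2}$-components of $f_{0}\circ\pi$ and $f_{2}\circ\pi$ matter. The paper phrases this last step as an orthogonality statement (the limit vanishes when $f_{0}\circ\pi\perp\alminv^{\nu}_{1,12}\mathbf{Y}$ or $f_{2}\circ\pi\perp\alminv^{\nu}_{12,2}\mathbf{Y}$) rather than your direct conditional-expectation manipulation, and it reaches the $T_{12}$-limit by rewriting $T_{1}^{g}h_{2}=T_{12}^{g}h_{2}$ instead of your change of variables $T_{12}^{-g}$; these are equivalent rearrangements of the same computation. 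Your preliminary appeal to Lemma~\ref{lem:top-extension} to obtain a continuous representative for $f_{1}$ is a detail the paper leaves implicit when applying Lemma~\ref{lem:char1}.
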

\begin{proof}
Let $(\mathbf{X},\mu)$ be a measure-preserving system in $\cat[erg](G^2)$ and fix $f_1$ in $\lp^\infty(\mathbf{X},\mu)$ bounded by $1$.
Fix $\epsilon > 0$.
By Theorem~\ref{thm:epsilon-sated} and Proposition~\ref{prop:erg-sated} we can find an extension $(\mathbf{Y},\nu)$ of $(\mathbf{X},\mu)$ in $\cat[erg](G^2)$ via a factor map $\pi$ that is $(\epsilon^2,f_1,\inv_{1,2})$ sated in $\cat[meas](G^2)$.
Lemma~\ref{lem:char1} implies that
\begin{equation}
\label{eqn:passingToExtension}
\left| \lim_{g\to \ultra{p}} \int f_{0} \cdot T_{1}^{g} f_{1} \cdot T_{12}^{g} f_{2} \intd\mu - \lim_{g\to \ultra{p}} \int (f_{0} \circ \pi) \cdot T_{1}^{g} \condex{f_{1} \circ \pi}{\inv_{1,2}\mathbf{Y}} \cdot T_{12}^{g} (f_{2} \circ \pi) \intd\nu \right| < \epsilon
\end{equation}
for any $f_0,f_2$ in $\cont(X)$ bounded by $1$.
By density and linearity we may assume $\condex{f_{1} \circ \pi}{\inv_{1,2} \mathbf{Y}} = h_{1}h_{2}$ where $h_{i}$ is $\alminv^\nu_{i}\mathbf{Y}$-measurable.
Under this assumption the second term in \eqref{eqn:passingToExtension} becomes
\[
%\lim_{g\to \ultra{p}} \int (f_{0} \circ \pi) \cdot T_{1}^{g} \condex{f_{1} \circ \pi}{\inv_{1,2}\mathbf{Y}} \cdot T_{12}^{g} f_{2} \intd\mu
\lim_{g\to \ultra{p}} \int (f_{0} \circ \pi) h_{1} \cdot T_{12}^{g} ((f_{2} \circ \pi) h_{2}) \intd\nu
=
\int (f_{0} \circ \pi) h_{1} \cdot \condex{(f_{2} \circ \pi) h_{2}}{\alminv^\nu_{12}\mathbf{Y}} \intd\nu
\]
by Theorem~\ref{thm:minimalInvariant}.
It follows that the above limit vanishes if $f_{0} \circ \pi \perp \alminv^\nu_{1,12}\mathbf{Y}$.
Since conditional expectation is self-adjoint, the above limit also vanishes if  $f_{2} \circ \pi \perp \alminv^\nu_{2,12}\mathbf{Y}$, which gives the desired result.
\end{proof}

\section{Strong Recurrence}
\label{sec:strongRec}

In this section we prove Theorem~\ref{thm:mapCorners}, which follows from Theorem~\ref{thm:mu4} below by passing to a continuous model as follows.
Theorem~\ref{thm:mu4} is a version of \cite[Corollary~4.9]{arxiv:1402.4736} for limits along minimal idempotent ultrafilters.
Given an action $S$ of a group $G$ on a probability space $(X,\mathscr{B},\mu)$ by measurable, measure-preserving maps, consider the space $\mathcal{A}$ of all bounded, measurable functions on $(X,\mathscr{B})$.
Equipped with the supremum norm it becomes a \cstar{}-algebra.
Let $\Omega$ be the Gelfand spectrum of $\mathcal{A}$.
Then $\cont(\Omega)$ and $\mathcal{A}$ are isomorphic as \cstar{}-algebras.
The action $S$ of $G$ on $(X,\mathscr{B},\mu)$ induces an action $T$ of $G$ on $\Omega$ by continuous maps.
Moreover, the measure $\mu$ induces a bounded, linear functional on $\cont(\Omega)$ that can be identified with a Baire probability measure on $\Omega$ that is $T$-invariant.

\begin{theorem}
\label{thm:mu4}
Let $G$ be a minimally almost periodic group and let $(\mathbf{X},\mu)$ be a measure-preserving system in $\cat[erg](G^2)$.
For any minimal idempotent ultrafilter $\ultra{p}$ in $\beta G$ we have
\begin{equation*}
\lim_{g \to \ultra{p}} \int f_{0} \cdot T_1^g f_{1} \cdot T_{12}^g f_{2} \intd\mu
\ge
\left( \int f_{0}^{1/4}f_{1}^{1/4}f_{2}^{1/4} \intd\mu \right)^4
\end{equation*}
for any non-negative measurable functions $f_{0},f_{1},f_{2}$ on $X$.
\end{theorem}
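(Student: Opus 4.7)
The plan is to reduce the integral to its characteristic-factor model via Theorem~\ref{thm:chf2}, rewrite the limit along $\ultra{p}$ as an integral against a relatively independent self-joining supplied by Lemma~\ref{lem:cond-prod} and Theorem~\ref{thm:minimalInvariant}, and finally apply a Hölder-type inequality anchored on the concavity of $x\mapsto x^{1/4}$. First I would handle preliminaries: by a truncation at level $M$ combined with monotone convergence (both sides of the desired inequality are monotone in each $f_i\ge 0$), it suffices to prove the bound when $0\le f_i\le 1$. Because $F\mapsto F^{1/4}$ is concave and conditional expectation preserves non-negativity and integrals, Jensen's inequality gives $\int f_i^{1/4}\intd\mu\le \int \condex{f_i\circ\pi}{\mathscr{G}}^{1/4}\intd\nu$ for any sub-$\sigma$-algebra $\mathscr{G}$, so any lower bound proved for the conditional expectations will imply the analogous bound for the original $f_i$.

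Next, given $\epsilon>0$, I would apply Theorem~\ref{thm:chf2} three times (once with $f_i$ distinguished, permuting the roles) to pass to a single ergodic extension $\pi:(\mathbf{Y},\nu)\to(\mathbf{X},\mu)$ where each $f_i\circ\pi$ may be replaced by the projection $F_i:=\condex{f_i\circ\pi}{C_i\mathbf{Y}}$ with $C_0=\alminv_{1,12}^\nu$, $C_1=\alminv_{1,2}^\nu$, $C_2=\alminv_{12,2}^\nu$, at the cost of an additive error $3\epsilon$. All three $F_i$ are non-negative and bounded by $1$. The task then is to lower-bound $\lim_{g\to\ultra{p}}\int F_0\cdot S_1^gF_1\cdot S_{12}^gF_2\intd\nu$.

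The core of the argument is step three. Using Theorem~\ref{thm:minimalInvariant} for the commuting isometries $S_1,S_{12}$, together with the Jacobs--de Leeuw--Glicksberg splitting of Lemma~\ref{lem:jdgSplitting} applied to the $G\times G$ action $S_1\times S_{12}$ on $\lp^{2}(\mathbf{Y}\times\mathbf{Y},\nu\otimes\nu)$, I would identify the limit as the integral of $F_0\otimes F_1\otimes F_2$ against a relatively independent self-joining $\Lambda$ of $\nu$ on $Y^{3}$ formed over the appropriate common invariant $\sigma$-subalgebra. Here $G^{2}$-ergodicity is essential: it forces the overlaps $\alminv_1^\nu\cap\alminv_2^\nu$, $\alminv_1^\nu\cap\alminv_{12}^\nu$, and $\alminv_2^\nu\cap\alminv_{12}^\nu$ to be trivial, so that $\Lambda$ factors in the way needed. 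This is the exact analogue in the minimal-idempotent setting of the self-joining construction used in \cite[Corollary~4.9]{arxiv:1402.4736}.

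Finally I would conclude with an iterated Hölder inequality of the form
\[
\int F_0\otimes F_1\otimes F_2\intd\Lambda\ge \left(\int F_0^{1/4}F_1^{1/4}F_2^{1/4}\intd\nu\right)^{4},
\]
which follows from the product structure of $\Lambda$ together with Hölder with four exponents equal to $4$, the fourth factor being the constant $1$. Combined with the Jensen step from the first paragraph and the $3\epsilon$ error from the reduction, letting $\epsilon\to 0$ yields the claim. The main obstacle I anticipate is step three: constructing and controlling the limit self-joining $\Lambda$ in the non-amenable, non-metrizable setting. The Jacobs--de Leeuw--Glicksberg splitting makes the flight part disappear in the weak limit, and minimal almost periodicity combined with Theorem~\ref{thm:minimalInvariant} identifies the reversible part with the invariants; the delicate bookkeeping is to ensure that the three characteristic factors interact precisely so that the final Hölder inequality produces the exponent $1/4$ on each $f_i$.
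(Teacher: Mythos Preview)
Your overall architecture is right, but step three is where the real content lies, and the paper handles it differently and more simply. Rather than constructing a three-fold self-joining $\Lambda$, the paper shows that once you have passed to the characteristic factors $F_0,F_1,F_2$, the correlation
\[
g\longmapsto \int F_0\cdot S_1^g F_1\cdot S_{12}^g F_2\intd\nu
\]
is \emph{constant} in $g$. This is proved by approximating each $F_i$ by sums of products $h_{i,j}$ with $h_{i,j}$ measurable for $\alminv_j^\nu\mathbf{Y}$, which reduces the claim to the joint independence of $\alminv_1^\nu\mathbf{Y}$, $\alminv_2^\nu\mathbf{Y}$, and $\alminv_{12}^\nu\mathbf{Y}$. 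Ergodicity gives independence of $\alminv_1^\nu\mathbf{Y}$ and $\alminv_2^\nu\mathbf{Y}$, and then Lemma~\ref{lem:jdgSplitting} combined with minimal almost periodicity forces any $\alminv_{12}^\nu\mathbf{Y}$-measurable function that is also $\alminv_{1,2}^\nu\mathbf{Y}$-measurable to be constant. Once constancy is established, one evaluates at $g=e$ to obtain $\int F_0 F_1 F_2\intd\nu$, and Lemma~\ref{lem:chu} (applied with $n=3$ and one function identically $1$) gives the lower bound $\bigl(\int f_0^{1/4}f_1^{1/4}f_2^{1/4}\intd\mu\bigr)^4$ directly in terms of the \emph{original} $f_i$. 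Your self-joining route can in principle be made to work, but verifying the H\"older-type bound for a general $\Lambda$ amounts to exactly this independence argument; the ``constant in $g$'' observation is the clean way to package it, and it bypasses any need to control a limit measure on $Y^3$.

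Two specific issues with your version. First, your Jensen step does not do what you need: concavity gives $\int f_i^{1/4}\intd\mu\le\int F_i^{1/4}\intd\nu$ for each $i$ separately, but this does not imply $\int f_0^{1/4}f_1^{1/4}f_2^{1/4}\intd\mu\le\int F_0^{1/4}F_1^{1/4}F_2^{1/4}\intd\nu$ when the three conditioning $\sigma$-algebras differ. Chu's lemma absorbs the passage from the $F_i$ back to the $f_i$ in one stroke, so the separate Jensen step is both unnecessary and, as stated, insufficient. Second, a single application of Theorem~\ref{thm:chf2} already replaces all three functions by their conditional expectations (the extension depends on $f_1$, but the conclusion is uniform over $f_0,f_2$), so invoking it three times is not needed.
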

\begin{proof}
By the monotone convergence theorem we may assume that the functions $f_{0},f_{1},f_{2}$ are bounded.
By Theorem~\ref{thm:chf2} it suffices to show that
\[
\lim_{g \to \ultra{p}} \int \condex{f}{\alminv^\mu_{1,12} \mathbf{X}} \cdot T_1^g \condex{f}{\inv_{1,2} \mathbf{X}} \cdot T_{12}^g \condex{f}{\alminv^\mu_{12,2}\mathbf{X}} \intd\mu
\ge
\left( \int f_{0}^{1/4}f_{1}^{1/4}f_{2}^{1/4} \intd\mu \right)^4
\]
for any ergodic measure-preserving system $(\mathbf{X},\mu)$ in $\cat[erg](G^2)$ and any non-negative $f_{0},f_{1},f_{2} \in \lp^{\infty}(X,\mu)$.
We prove that
\begin{equation}
\label{eqn:constantCorrelation}
g \mapsto \int \condex{f_{0}}{\alminv^\mu_{1,12}\mathbf{X}} \cdot T_{1}^{g} \condex{f_{1}}{\inv_{1,2}\mathbf{X}} \cdot T_{12}^{g} \condex{f_{2}}{\alminv^\mu_{12,2}\mathbf{X}} \intd\mu
\end{equation}
does not depend on $g$.
Since any $\alminv_{1,2}^\mu\mathbf{X}$ measurable function can be approximated by linear combinations of functions of the form $\xi_1 \xi_2$ where $\xi_i$ is $\alminv_i^\mu \mathbf{X}$ measurable, we may replace $f_{0}$ above with $h_{0,1}h_{0,12}$, where $h_{j,i}$ is $\alminv^\mu_i \mathbf{X}$ measurable, and similarly $f_{1}$ and $f_{2}$ by $h_{1,1} h_{1,2}$ and $h_{2,12} h_{2,2}$ respectively.
Then the above integral equals
\[
\int h_{0,12} h_{2,12} \cdot h_{0,1} h_{1,1} \cdot T_{12}^{g} (h_{1,2} h_{2,2}) \intd\mu
\]
and it therefore suffices to show that the sub-$\sigma$-algebras $\alminv^\mu_{1} \mathbf{X}$, $\alminv^\mu_{2} \mathbf{X}$, and $\alminv^\mu_{12} \mathbf{X}$ are jointly independent.
Let now $h_{i}\in \lp^\infty(X,\mu)$ be $\alminv^\mu_i\mathbf{X}$ measurable for $i=1,2,12$.
We have to show
\[
\int h_{1} \cdot h_{2} \cdot h_{12} \intd\mu = \int h_{1} \intd\mu \int h_{2} \intd\mu \int h_{12} \intd\mu.
\]
From
\[
\int h_1 \cdot h_2 \cdot h_{12} \intd \mu
=
\int \condex{h_1}{\alminv_1^\mu \mathbf{X}} \cdot \condex{h_2}{\alminv_2^\mu \mathbf{X}} \cdot h_{12} \intd \mu
=
\int h_1 \cdot h_2 \cdot \condex{h_{12}}{\alminv_{1,2}^\mu \mathbf{X}} \intd \mu
\]
and the fact that the conditional expectation onto $\alminv_{1,2}^\mu \mathbf{X}$ commutes with $T_{12}$ (since this $\sigma$-algebra is $T_{12}$-invariant), we may additionally assume that $h_{12}$ is $\alminv^\mu_{1,2} \mathbf{X}$ measurable.

Ergodicity of $(\mathbf{X},\mu)$ implies the sub-$\sigma$-algebras $\alminv^\mu_{1} \mathbf{X}$ and $\alminv^\mu_{2} \mathbf{X}$ are independent.
Thus
\[
\lp^2(\alminv_{1,2}^\mu \mathbf{X}) \cong \lp^2(\alminv_1^\mu \mathbf{X}) \otimes \lp^2(\alminv_2^\mu \mathbf{X})
\]
and, thinking of $h_{12}$ as an element of the right-hand side, we see that $h_{12}$ is a member of
\[
\lp^2(\mathsf{C}_2^\mu \alminv_1^\mu \mathbf{X}) \otimes \lp^2(\mathsf{C}_1^\mu \alminv_2^\mu \mathbf{X})
\]
by Lemma~\ref{lem:jdgSplitting}.
(Here $\mathsf{C}_i^\mu$ denotes the $\sigma$-algebra of $T_i$ almost periodic functions.)
Since $G$ is minimally almost periodic $\mathsf{C}_2 \alminv_1^\mu \mathbf{X}$ and $\mathsf{C}_1 \alminv_2^\mu \mathbf{X}$ are both the trivial $\sigma$-algebra.
This proves \eqref{eqn:constantCorrelation} is constant.
Finally
\begin{align*}
&
\lim_{g \to \ultra{p}} \int \condex{f_{0}}{\alminv^\mu_{1,12}\mathbf{X}} \cdot T_1^g \condex{f_{1}}{\inv_{1,2}\mathbf{X}} \cdot T_{12}^g \condex{f_{2}}{\alminv^\mu_{12,2} \mathbf{X}} \intd\mu
\\
=
&
\int \condex{f_{0}}{\alminv^\mu_{1,12}\mathbf{X}} \cdot \condex{f_{1}}{\inv_{1,2}\mathbf{X}} \cdot \condex{f_{2}}{\alminv^\mu_{12,2} \mathbf{X}} \intd\mu
\\
\ge
&
\left( \int f_{0}^{1/4}f_{1}^{1/4}f_{2}^{1/4} \intd\mu \right)^4
\end{align*}
by Lemma~\ref{lem:chu}.
\end{proof}

\section{Combinatorial results}
\label{sec:combinatorics}

We begin by proving Theorem~\ref{thm:syndetic}.
Fix an ultrafilter on $\mathbb{N}$ for taking ultraproducts.
Let $n \mapsto G_n$ be a quasirandom sequence of finite groups.
Let $G$ be the ultraproduct of the sequence $n \mapsto G_n$ and let $\Omega$ be the ultraproduct of the sequence $n \mapsto G_n \times G_n$.
We consider the commuting actions $L_1$ and $L_2$ of $G$ on $\Omega$ defined by $L_1^g (x,y) = (gx,y)$ and $L_2^g(x,y) = (x,gy)$ respectively.
The induced $G \times G$ action $L$ is just the action of $\Omega$ on itself by left multiplication.
We first note that this action is ergodic with respect to the Loeb measure $\haar$ on $\Omega$ provided $G$ is minimally almost periodic.

\begin{lemma}
If $G$ is minimally almost periodic then the $G \times G$ action $L$ on $(\Omega,\haar)$ is ergodic.
\end{lemma}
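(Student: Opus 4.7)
The plan is to prove ergodicity of the $L$-action by a direct Loeb-approximation argument: although minimal almost periodicity is the standing hypothesis, the conclusion actually follows from the fact that on each finite factor the relevant supremum is attained, which upgrades per-element approximate invariance to uniform approximate invariance along the ultrafilter on $\mathbb{N}$.

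Let $F\in\lp^\infty(\Omega,\haar)$ be bounded by $1$ and $L$-invariant, and fix $\epsilon>0$. Since standard parts of bounded internal functions are $\lp^2$-dense in $\lp^2(\Omega,\haar)$, I would choose an internal approximation $F_{\mathrm{int}}=\mathrm{st}([F_n])$, with $F_n:G_n\times G_n\to\mathbb{C}$ uniformly bounded, satisfying $\nbar F-F_{\mathrm{int}}\nbar_{2}<\epsilon$. Unitarity of each $L^g$ and the triangle inequality transfer the $L$-invariance of $F$ to the quantitative estimate
\[
\nbar L^g F_{\mathrm{int}}-F_{\mathrm{int}}\nbar_{\lp^2(\haar)}\le 2\epsilon
\qquad\text{for every } g\in G\times G.
\]
For each $n$, the finite supremum $\phi_n := \sup_{g\in G_n\times G_n}\nbar L^g F_n-F_n\nbar_{\lp^2(G_n\times G_n)}^2$ is attained at some element $g_n^*$. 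The sequence $(g_n^*)$ represents an element $g^*\in G\times G$ of the ultraproduct, and applying the above defect bound at $g^*$ yields $\lim_{n\to\ultra{q}}\phi_n\le 4\epsilon^2$, where $\ultra{q}$ is the fixed ultrafilter on $\mathbb{N}$.

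The next step uses the standard averaging identity
\[
\frac{1}{|G_n\times G_n|}\sum_{g\in G_n\times G_n}\nbar L^g F_n-F_n\nbar_{\lp^2}^2 = 2\,\nbar F_n-\condexp F_n\nbar_{\lp^2}^2,
\]
which is a direct computation exploiting the simple transitivity of the left regular action of $G_n\times G_n$ on itself. Since the supremum dominates the average, $\nbar F_n-\condexp F_n\nbar_2^2\le \phi_n/2$, and passing to the ultralimit gives $\nbar F_{\mathrm{int}}-\int F_{\mathrm{int}}\intd\haar\nbar_{\lp^2(\haar)}^2\le 2\epsilon^2$. Combining this with the $\epsilon$-approximation yields $\nbar F-\int F\intd\haar\nbar_{\lp^2(\haar)}=O(\epsilon)$, and letting $\epsilon\to 0$ shows $F$ is a.e.\ constant, which is the ergodicity of $L$. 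The only technical point is the identification $L^g F_{\mathrm{int}}=\mathrm{st}([L^{g_n}F_n])$ for $g=[g_n]$, needed so that the Loeb defect of $F_{\mathrm{int}}$ at $g$ equals the ultralimit of the finite defects; this follows from the coordinate-wise definition of $L$ on the ultraproduct together with Fubini-type properties of Loeb integration on bounded internal functions.
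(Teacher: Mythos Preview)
Your argument is correct and takes a genuinely different route from the paper. The paper's proof is a two-line appeal to \cite[Lemma~33]{MR3177376}, which asserts the weak-mixing type identity $\haar(B\cap (L^{x})^{-1}B)=\haar(B)^{2}$ for Loeb-almost every $x\in\Omega$; ergodicity then follows because an almost invariant $B$ forces $\haar(B)=\haar(B)^{2}$. That cited lemma genuinely uses the quasirandomness of the sequence (equivalently, the minimal almost periodicity of $G$): for instance, the ultraproduct of cyclic groups with the internal interval $B_{n}=[0,\lfloor p_{n}/2\rfloor]$ shows the a.e.\ identity fails without it.

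By contrast, your approximation-by-internals argument, together with the finite supremum trick and the averaging identity $\tfrac{1}{|H|}\sum_{g\in H}\nbar L^{g}f-f\nbar_{2}^{2}=2\nbar f-\condexp f\nbar_{2}^{2}$ for the left regular action of a finite group $H$ on itself, establishes ergodicity of the regular action on any ultraproduct of finite groups, with no representation-theoretic input. So your observation that the minimal almost periodicity hypothesis is not actually needed for this lemma is correct and sharpens the statement. The trade-off is that the paper's route, while using a stronger hypothesis, gets weak mixing essentially for free (as the paper itself remarks just after the proof), whereas your argument yields only ergodicity. The technical identifications you flag---density of standard parts of bounded internal functions in $\lp^{2}(\Omega,\haar)$, the equality $L^{g}F_{\mathrm{int}}=\mathrm{st}([L^{g_{n}}F_{n}])$, and the passage of $\lp^{2}$ norms to ultralimits---are all standard properties of the Loeb construction and pose no difficulty.
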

\begin{proof}
Given a Loeb measurable subset $B$ of $\Omega$ we have $\haar(B \cap (L^x)^{-1} B) = \haar(B)^2$ for Loeb almost every $x \in \Omega$ by \cite[Lemma~33]{MR3177376}.
Thus if $B$ is almost invariant then $\haar(B) \in \{0,1\}$.
\end{proof}

In fact \cite[Lemma~33]{MR3177376} implies $L$ is weak mixing, but we will not need this.

It will be convenient later to pass to a model of this action on a compact, Hausdorff space.
We do so by considering the C$^*$ algebra of bounded, measurable functions on $\Omega$, which can be represented as $\cont(X)$ for some compact, Hausdorff topological space $X$.
Let $\mathscr{B}$ be the Baire sub-$\sigma$-algebra of $X$.
The Loeb measure $\haar$ on $\Omega$ passes to a probability measure $\mu$ on $(X,\mathscr{B})$ and the $G$ actions $L_1$ and $L_2$ become actions $T_1$ and $T_2$ of $G$ on $X$ by homeomorphisms.
Write $T$ for the induced $G \times G$ action $(g_1,g_2) \mapsto T^{(g_1,g_2)}$ on $X$.
Thus we have a measure-preserving system $(\mathbf{X},\mu)$ in $\cat[meas](G^2)$.
This system is ergodic when $G$ is minimally almost periodic.

\begin{proposition}
If $G$ is minimally almost periodic then $(\mathbf{X},\mu)$ is ergodic.
\end{proposition}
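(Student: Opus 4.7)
The plan is to transfer ergodicity from $(\Omega,\haar)$, established in the previous lemma, to $(\mathbf{X},\mu)$ via the Gelfand isomorphism, by checking that invariant $\lp^2$ functions correspond under this isomorphism.

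First I would set up the correspondence at the level of $\lp^2$. By construction we have a $\cstar$-isomorphism $\Phi : \lp^\infty(\Omega,\haar) \to \cont(X)$, and $\mu$ is defined so that $\int \Phi(\tilde\phi) \intd\mu = \int \tilde\phi \intd\haar$ for all $\tilde\phi \in \lp^\infty(\Omega,\haar)$. Applying this to $|\tilde\phi|^2$ shows that $\Phi$ is isometric with respect to the $\lp^2$ norms induced by $\haar$ and $\mu$. Since $\cont(X)$ is dense in $\lp^2(X,\mathscr{B},\mu)$ (as $\mu$ is a Radon Baire probability measure on a compact Hausdorff space) and $\lp^\infty(\Omega,\haar)$ is dense in $\lp^2(\Omega,\haar)$, the map $\Phi$ extends to a unitary isomorphism $\Phi : \lp^2(\Omega,\haar) \to \lp^2(X,\mathscr{B},\mu)$. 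By construction of $T_1$ and $T_2$ as the homeomorphisms of $X$ dual to the pullback action of $L_1,L_2$ on $\lp^\infty(\Omega,\haar)$, the isomorphism $\Phi$ intertwines the induced unitary representations: $\Phi \circ L^{(g_1,g_2)} = T^{(g_1,g_2)} \circ \Phi$ on $\lp^\infty$, and then everywhere by density.

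Next I would use this to conclude ergodicity. Let $B \subset X$ be a Baire set that is $T$-almost invariant; then $\mathbf{1}_B$ is a $T$-invariant element of $\lp^2(X,\mathscr{B},\mu)$. By the intertwining, $\Phi^{-1}(\mathbf{1}_B)$ is an $L$-invariant element of $\lp^2(\Omega,\haar)$. Since $G$ is minimally almost periodic, the preceding lemma tells us $L$ is ergodic on $(\Omega,\haar)$, so $\Phi^{-1}(\mathbf{1}_B)$ is $\haar$-almost everywhere constant; applying $\Phi$ and using that $\Phi$ is an isometric $\cstar$-map (so it preserves constants and the $\lp^2$ norm of the difference with a constant), $\mathbf{1}_B$ is $\mu$-almost everywhere constant. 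Hence $\mu(B) \in \{0,1\}$, and $(\mathbf{X},\mu)$ is ergodic in the sense of $\cat[erg](G^2)$.

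The only slightly delicate point is verifying that the unitary extension of $\Phi$ really does intertwine the two $\lp^2$ unitary representations of $G^2$; this reduces to the definition of $T_i$ as coming from the Gelfand dual of $L_i$ on $\lp^\infty(\Omega,\haar)$, together with the fact that each $T_i^g$ is a homeomorphism and hence induces isometries of $\lp^2(X,\mu)$ that agree with $\Phi \circ L_i^g \circ \Phi^{-1}$ on the dense subspace $\cont(X)$. Everything else is a routine application of density and the Riesz--Markov identification used to define $\mu$.
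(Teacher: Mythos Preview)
Your argument is correct and is the natural ``transport of structure'' proof: the Gelfand construction gives an equivariant, measure-preserving $*$-isomorphism at the level of bounded measurable functions, this extends to a unitary between the $\lp^2$ spaces intertwining the $G\times G$ actions, and ergodicity passes across such a unitary equivalence. One small technical point: as written in the paper, the \cstar{}-algebra whose Gelfand spectrum is $X$ is the algebra of bounded \emph{measurable} functions on $\Omega$ with the supremum norm, not $\lp^\infty(\Omega,\haar)$ (equivalence classes with essential supremum). This does not affect your argument, since two representatives agreeing $\haar$-a.e.\ map to continuous functions on $X$ that agree $\mu$-a.e., so the induced map to $\lp^2(X,\mu)$ is well-defined on $\lp^\infty(\Omega,\haar)$ exactly as you claim.

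The paper takes a different route. Rather than constructing the $\lp^2$ unitary explicitly, it fixes a minimal idempotent ultrafilter $\ultra{p}$ on $G\times G$ and computes $\lim_{g\to\ultra{p}}\int \phi\cdot T^g f\,d\mu$ in two ways: once via Theorem~\ref{thm:minimalInvariant} on $(\mathbf{X},\mu)$, giving $\int \phi\cdot\condex{f}{\alminv^\mu\mathbf{X}}\,d\mu$, and once by pulling back to $(\Omega,\haar)$ and using Theorem~\ref{thm:minimalInvariant} together with the ergodicity lemma there, giving $\int\phi\,d\mu\int f\,d\mu$. Comparing the two forces $\condex{f}{\alminv^\mu\mathbf{X}}$ to be constant. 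Your approach is more elementary in that it avoids the ultrafilter machinery entirely and in particular does not need to know that $G\times G$ is minimally almost periodic (only the conclusion of the preceding lemma); the paper's approach, on the other hand, stays within the framework of ultrafilter limits used throughout and avoids spelling out the $\lp^2$ correspondence.
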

\begin{proof}
Fix a continuous function $f$ on $X$ and a minimal idempotent ultrafilter $\ultra{p}$ on $G \times G$.
By Theorem~\ref{thm:minimalInvariant} we have
\begin{equation*}
\lim_{g \to \ultra{p}} \int \phi \cdot  T^g f \intd\mu = \int \phi \cdot \condex{f}{\alminv^\mu\mathbf{X}} \intd\mu
\end{equation*}
for any continuous $\phi$.
By evaluating the left hand side on $\Omega$ rather than on $X$, we obtain
\begin{equation*}
\lim_{g \to \ultra{p}} \int \phi \cdot T^g f \intd\mu = \int \phi \intd\mu \int f \intd\mu
\end{equation*}
by Theorem~\ref{thm:minimalInvariant} and the previous lemma.
Thus $\condex{f}{\alminv^\mu \mathbf{X}} = \int f \intd\mu$.
The same is true for any function that can be approximated in $\lp^2(X,\mathscr{B},\mu)$ by continuous functions.
\end{proof}

%Given $f$ in $\lp^2(X,\mu)$, we say that $\condex{f}{\alminv^\mu_2\mathbf{X}}$ is \define{continuous} if it is representable by a continuous function.

%\begin{proposition}
%If $G$ is minimally almost periodic then $\condex{f}{\alminv^\mu_2\mathbf{X}}$ is continuous for every $f \in \cont(X)$.
%\end{proposition}
%\begin{proof}
%Let $\ultra{p}$ be a minimal idempotent ultrafilter on $G$.
%For any $\phi,f \in \cont(X)$ we have
%\begin{equation*}
%\lim_{g \to \ultra{p}} \int \phi \cdot T_2^g f \intd\mu = \int \phi \cdot \condex{f}{\alminv^\mu_2\mathbf{X}} \intd\mu
%\end{equation*}
%by Theorem~\ref{thm:minimalInvariant}.
%Let $\Phi$ and $F$ be the bounded, measurable functions on $\Omega$ that correspond to $\phi$ and $f$ respectively, and let $\psi$ be a continuous function on $X$ corresponding to $\lim_{g \to \ultra{p}} L^g F$.
%Another application of Theorem~\ref{thm:minimalInvariant} yields
%\begin{equation*}
%\lim_{g \to \ultra{p}} \int \Phi \cdot L^g F \intd\haar = \int \phi \cdot \psi \intd\mu
%\end{equation*}
%and, since $\phi \in \cont(X)$ was arbitrary, we have $\condex{f}{\alminv^\mu_2\mathbf{X}} = \psi$ in $\lp^2(\mathbf{X})$.
%\end{proof}

Now we turn to the proof of Theorem~\ref{thm:syndetic}.

\begin{proof}[Proof of Theorem~\ref{thm:syndetic}]
Suppose that the conclusion fails for some $0 < \alpha < 1$ and some $\epsilon > 0$.
Then we can find sequences $D_n \to \infty$ and $K_n \to \infty$ in $\mathbb{N}$, a sequence $n \mapsto G_n$ of $D_{n}$-quasirandom groups in $\mathcal{F}$, and sets $A_{n} \subset G_{n}\times G_{n}$ with $|A_{n}|\geq\alpha |G_{n}|^{2}$ such that
\[
R_{n} := \left\{ g \in G_{n} : \frac{|A_{n}\cap (1,g)^{-1}A_{n} \cap (g,g)^{-1}A_{n}|}{|G_n|^2} > \alpha^{4}-\epsilon \right\}
\]
is not right $K_{n}$-syndetic.
Since being right $K$-syndetic for some finite $K$ is a first order property, it follows that the ultraproduct $R$ of the sequence $n \mapsto R_n$ is not right $K$-syndetic for any $K$, and therefore not right syndetic.

Let $G$ be the ultraproduct of the sequence $n \mapsto G_n$ and let $\Omega$ be the ultraproduct of the sequence $n \mapsto G_n \times G_n$.
Since $\mathcal{F}$ is a quasirandom ultraproduct class the group $G$ is minimally almost periodic.
Let $A$ be the internal subset of $\Omega$ determined by the sequence $n \mapsto A_n$.
We have
\[
R
\supseteq
\{g\in G : \mu(A \cap (1,g)^{-1}A \cap (g,g)^{-1}A) > \alpha^{4}-\epsilon/2\}
\]
where $\mu$ is the Loeb measure on $\Omega$.
But $R$ is right central$^*$ by Theorem~\ref{thm:mu4}, and therefore right syndetic by Lemma~\ref{lem:central-star-sets-are-syndetic}, giving the desired contradiction.
\end{proof}

\begin{proof}[Proof of Theorem~\ref{thm:increasingGroups}]
Fix a quasirandom sequence $n \mapsto G_n$ with $G_n \incl G_{n+1}$ for all $n \in \mathbb{N}$.
Suppose the theorem is false for some $0 < \alpha < 1$ and some $\epsilon > 0$.
Then we have a sequence of sets $A_n \subset G_n \times G_n$ with $|A_n| \ge \alpha |G_n|^2$ and
\begin{equation}
\label{eqn:increasingReturns}
| \{ g \in G_n : A_n \cap (1,g)^{-1}A_n \cap (g,g)^{-1} A_n \ne \emptyset \} | \le (1 - \epsilon) |G_n|
\end{equation}
for all $n \in \mathbb{N}$.

Let $G$ be the direct limit of the sequence $n \mapsto G_n$.
Put $A = \cup \{ A_n : n \in \mathbb{N} \}$ in $G \times G$.
We have
\[
\limsup_{N \to \infty} \frac{|A \cap G_N \times G_N|}{|G_N \times G_N|} \ge \alpha
\]
so $A$ has positive upper density with respect to the F\o{}lner sequence $N \mapsto G_N \times G_N$ in $G \times G$.
By \cite[Lemma~5.2]{arxiv:1309.6095} there is an ergodic action $T$ of $G \times G$ on a compact, metric probability space $(X,\mathscr{B},\mu)$, an open set $U \subset X$ with $\mu(U) = \upperdens_\Phi(A)$, and a F\o{}lner sequence $\Psi$ on $G \times G$ such that
\[
\upperdens_\Psi((g_1,h_1)^{-1} A \cap \cdots \cap (g_n,h_n)^{-1} A)
\ge
\mu((T^{(g_1,h_1)})^{-1} U \cap \cdots \cap (T^{(g_n,h_n)})^{-1} U)
\]
for all $g_n,h_n \in G$.
In particular
\[
\upperdens_\Psi(A \cap (1,g)^{-1}A \cap (g,g)^{-1}A)
\ge
\mu(U \cap (T_1^g)^{-1} U \cap (T_1^g T_2^g)^{-1}U)
\]
for all $g \in G$.
It follows from \cite[Corollary~4.9]{arxiv:1402.4736} that for every $\epsilon > 0$ the set
\[
\{ g \in G : \upperdens_\Psi(A \cap (1,g)^{-1}A \cap (g,g)^{-1}A) > 0 \}
\]
has full density with respect to every F\o{}lner sequence in $G$.
In particular, it has full density with respect to the F\o{}lner sequence $N \mapsto G_N \times G_N$, contradicting \eqref{eqn:increasingReturns} for $n$ large enough.
\end{proof}

\appendix
\section{Chu's inequality}
We use the following slightly generalized version of Chu's lower bound for a product of conditional expectations \cite[Lemma 1.6]{MR2794947}.
\begin{lemma}
\label{lem:chu}
Let $f_{0},\dots,f_{n}$ be non-negative integrable functions on a probability space $(X,B,\mu)$ and let $B_{1},\dots,B_{n}\subset B$ be arbitrary sub-$\sigma$-algebras.
Then
\[
\int f_{0} \prod_{i=1}^{n} \condex{f_{i}}{B_{i}}
\geq
\Big( \int \prod_{i=0}^{n} f_{i}^{\frac{1}{n+1}} \Big)^{n+1}.
\]
\end{lemma}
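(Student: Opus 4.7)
The plan is to derive Chu's inequality from a single cleverly chosen application of Hölder's inequality, after a rearrangement that uses the conditional expectations $C_i := \condex{f_i}{B_i}$ as multiplicative ``weights''. First I would observe that on the set $\{C_i = 0\}$ the hypotheses $f_i \geq 0$ and $\condex{f_i}{B_i} = 0$ together force $f_i = 0$ $\mu$-a.e., so the product $\prod_i f_i$ vanishes there and this set contributes nothing to $\int \prod_i f_i^{1/(n+1)}$. After discarding this null contribution one has the pointwise identity
\[
\prod_{i=0}^n f_i^{1/(n+1)} = \left(f_0 \prod_{i=1}^n C_i\right)^{1/(n+1)} \prod_{i=1}^n \left(\frac{f_i}{C_i}\right)^{1/(n+1)}
\]
wherever all the $C_i$ are positive, with the convention $f_i/C_i := 0$ on $\{C_i = 0\}$.

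Next I would apply Hölder's inequality with $n+1$ equal exponents, each equal to $n+1$, to bound
\[
\int \prod_{i=0}^n f_i^{1/(n+1)} \;\leq\; \left(\int f_0 \prod_{i=1}^n C_i\right)^{\!1/(n+1)} \prod_{i=1}^n \left(\int \frac{f_i}{C_i}\right)^{\!1/(n+1)}.
\]
The remaining ingredient is that each ``corrector'' integral $\int f_i/C_i$ is at most $1$: since $1/C_i$ is $B_i$-measurable on $\{C_i>0\}$, the tower property gives $\condex{\mathbf{1}_{\{C_i>0\}} f_i/C_i}{B_i} = \mathbf{1}_{\{C_i>0\}}$, so $\int f_i/C_i = \mu(\{C_i>0\}) \leq 1$. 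Raising both sides to the $(n+1)$-th power yields exactly the stated inequality.

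The only subtlety I expect is the bookkeeping around the sets where some $C_i$ vanishes, which is handled by the preliminary observation that $f_i$ is a.e.\ zero on $\{C_i=0\}$; after that the whole argument collapses to one application of Hölder, with no need for induction on $n$.
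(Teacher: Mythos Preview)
Your proposal is correct and follows essentially the same route as the paper's proof: the same pointwise factorisation $\prod_i f_i^{1/(n+1)} = (f_0\prod_i C_i)^{1/(n+1)}\prod_i (f_i/C_i)^{1/(n+1)}$, the same single application of H\"older with $n+1$ equal exponents, and the same bound $\int f_i/C_i = \mu(\{C_i>0\}) \le 1$ via $B_i$-measurability of $1/C_i$. The only cosmetic difference is that the paper carries the indicator $1_{\{C_i>0\}}$ through the computation explicitly rather than appealing to a convention, but the handling of the null set is equivalent.
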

The main advantage of the present formulation is the ability to take $f_{0}\equiv 1$.
\begin{proof}
Note that $\{f_{i}>0\} \subset \{\condex{f_{i}}{B_{i}} > 0\}$ up to a set of measure zero for every $i$.
By H\"older's inequality we have
\begin{align*}
\int \prod_{i=0}^{n} f_{i}^{\frac{1}{n+1}}
&=
\int \big( f_{0}^{\frac{1}{n+1}} \prod_{i=1}^{n} \condex{f_{i}}{B_{i}}^{\frac{1}{n+1}} \big)
\cdot \prod_{i=1}^{n} \Big( 1_{\{\condex{f_{i}}{B_{i}} > 0\}} \frac{f_{i}}{\condex{f_{i}}{B_{i}}} \Big)^{\frac{1}{n+1}}\\
&\leq \Big(\int f_{0} \prod_{i=1}^{n} \condex{f_{i}}{B_{i}} \Big)^{\frac{1}{n+1}}
\cdot \prod_{i=1}^{n} \Big( \int_{\{\condex{f_{i}}{B_{i}} > 0\}} \frac{f_{i}}{\condex{f_{i}}{B_{i}}} \Big)^{\frac{1}{n+1}}.\\
\intertext{Since the functions $\condex{f_{i}}{B_{i}}^{-1}$ are $B_{i}$-measurable, $f_{i}$'s may be replaced by their expectations onto $B_{i}$ in the integrals in the second factor.
Thus we obtain}
&=
\Big(\int f_{0} \prod_{i=1}^{n} \condex{f_{i}}{B_{i}} \Big)^{\frac{1}{n+1}}
\cdot \prod_{i=1}^{n} |\{\condex{f_{i}}{B_{i}} > 0\}|^{\frac{1}{n+1}}\\
&\leq
\Big(\int f_{0} \prod_{i=1}^{n} \condex{f_{i}}{B_{i}} \Big)^{\frac{1}{n+1}}.
\qedhere
\end{align*}
\end{proof}
It would be interesting to know whether the lower bound in Lemma~\ref{lem:chu} is sharp for some characteristic functions $f_{i}$.
There are two sources of inefficiency in its proof: the H\"older inequality and the estimate $|\{\condex{f_{i}}{B_{i}}\}|\leq 1$.
It is clear that the second source of inefficiency can be easily eliminated.
On the other hand, an example in which the H\"older inequality gives a sharp estimate can be found in \cite[Appendix B]{arxiv:1309.6095} (in hindsight this provides an explanation for why that example, which has been initially found numerically, works).
However, it is not clear whether both sources of inefficiency can be controlled simultaneously.

\printbibliography

\end{document}